\documentclass[11pt,oneside]{amsart}
\usepackage{standalone} 
\usepackage[T1]{fontenc}
\usepackage[latin9]{inputenc}
\usepackage{geometry}
\usepackage{verbatim}
\usepackage[all]{xy} 
\usepackage{textcomp}
\usepackage{dsfont}
\usepackage{caption}
\usepackage{amsthm}
\usepackage{amstext}
\usepackage{enumerate}
\usepackage{stmaryrd}
\usepackage{amssymb}
\usepackage{latexsym}
\usepackage{mathtools}
\usepackage{esint}
\usepackage{xcolor}
\usepackage{mathrsfs}
\usepackage{wasysym}
\usepackage{graphicx}
\usepackage{nomencl}
\usepackage[english]{babel}
\usepackage{tikz-cd}
\usepackage{pdfpages}
\usepackage{hyperref}
\usepackage[shortlabels]{enumitem}

\usepackage{bm}

\def\bR{{\mathbb R}}
\def\bA{{\mathbb A}}
\def\bR{{\mathbb R}}
\def\bZ{{\mathbb Z}}

\def\bT{{\mathbb T}}

\def\bZ{{\mathbb{Z}}}
\def\bN{{\mathbb{N}}}

\def\cA{{\mathcal A}}
\def\cF{{\mathcal F}}

\def\cM{{\mathcal M}}
\def\cN{{\mathcal N}}
\def\cI{{\mathcal I}}
\def\cC{{\mathcal C}}

\def\cE{{\mathcal E}}
\def\cR{{\mathcal R}}
\def\cL{{\mathcal L}}
\def\cQ{{\mathcal Q}}

\def\g{{\mathfrak g}}
\def\a{{\mathfrak a}}
\def\h{{\mathfrak h}}

\def\ker{{\text{ker}}}

\def\lb{{\llbracket}}
\def\rb{{\rrbracket}}

\def\ov\g{{\overline{\mathfrak{g}}}}
\def\TT{\mathbb{T}}

\def\Lie{\mathscr{L}}

\def\Fun{C_{\mathrm{poly}}(\cM)}

\def\Der{\mathrm{Der}}

\def\dCE{d_{\scriptscriptstyle{CE}}}

\def\dE{d_{\scriptscriptstyle{\cE}} }
\def\dR{d_{\scriptscriptstyle{\cR}} }
\def\rhocE{\boldsymbol{\rho}_{\scriptscriptstyle{\cE}}}
\def\scrsE{\scriptscriptstyle{\cE}}

\newcommand{\SP} [1]     {{\left\langle {{#1}} \right\rangle}}

\newcommand{\Cour}[1]      {[\![#1]\!]}
\newcommand{\m}[1]{{ \lvert #1 \rvert}}

\makeatletter
\newcommand{\iddots}{\mathinner{\mkern1mu\raise\p@%
  \hbox{.}\mkern2mu\raise4\p@\hbox{.}\mkern2mu%
  \raise7\p@\hbox{.}\mkern1mu}}
\makeatother

\makeatother

\newtheorem{theorem}{Theorem}[section]
\newtheorem*{theorem*}{Theorem}
\newtheorem{proposition}[theorem]{Proposition}
\newtheorem{corollary}[theorem]{Corollary}

\theoremstyle{definition}
\newtheorem{definition}[theorem]{Definition}

\theoremstyle{definition}
\newtheorem{remark}[theorem]{Remark}

\theoremstyle{definition}
\newtheorem{example}[theorem]{Example}

\title{Homological reduction of Courant algebroids}

\date{}
\author[1]{Henrique Bursztyn}
\address{Insituto de Matem\'atica Pura e Aplicada, Rio de Janeiro - Brazil}
\email{henrique@impa.br}
\author[2]{Pedro H. Carvalho}
\address{Faculty of Science, University of Hradec Kr\'alov\'e, Hradec Kr\'alov\'e - Czech Republic}
\email{pedro.silva@uhk.cz} 

\begin{document}

\begin{abstract}
We develop a homological model for hamiltonian reduction of Courant
algebroids, parallel to the BFV formulation of hamiltonian
reduction in symplectic geometry. To a hamiltonian action
on a Courant algebroid, we associate a differential
graded Courant--Dorfman algebra, obtained from the BFV algebra of the corresponding graded
symplectic manifold; under usual regularity assumptions, its cohomology recovers the Courant--Dorfman algebra of the reduced Courant algebroid.
\end{abstract}

\maketitle

\tableofcontents

\section{Introduction}


Courant algebroids lie at the intersection of several areas of geometry
and mathematical physics, including Poisson geometry,
generalized geometry, T-duality, field theory and sigma models, and
graded and shifted symplectic geometry, see e.g. \cite{AlekseevStrobl,BaragliaHekmati,Bressler,GarciaFernandezRicci,Gualtieri GC,Pym-Safronov,Roytenberg - AKSZ,SeveraValachSupergravity} and references therein. A foundational construction
in this setting is {\em hamiltonian Courant reduction} \cite{GGGR,Bursztyn}. In analogy with
hamiltonian reduction in symplectic geometry, symmetry and moment-map
data are used to restrict a Courant algebroid to a level set and descend
it to a quotient space.

Hamiltonian reduction in symplectic and Poisson geometry admits a
homological formulation through the BFV formalism, originating in physics \cite{BV, BRS, BF} and developed mathematically in \cite{Kimura,Kostant,Stasheff}. A hamiltonian action gives rise to a
differential graded Poisson algebra (or, more generally, a
$P_\infty$-algebra \cite{Carvalho}) whose cohomology recovers, under suitable
regularity assumptions, the Poisson algebra of functions on the reduced
space. In this formulation, the  infinitesimal symmetries,
moment map and constraints are encoded in a single differential-graded object.

In this paper we develop a parallel homological description
of hamiltonian Courant reduction. For this purpose, we introduce
{\em differential graded (dg) Courant--Dorfman algebras}, which are dg versions of
the Courant--Dorfman algebras of \cite{Roytenberg CD}; in this context the Courant-Dorfman structural maps (pairing, derivation and Leibniz bracket) are defined on a dg module $(\cE,\dE)$ over a commutative dg algebra $(\cR, \dR)$. A basic feature of a dg
Courant--Dorfman algebra of degree $-m$ is that it induces an ordinary
Courant--Dorfman algebra on the cohomology pair
$$
(H^0_\cR,H^m_\cE).
$$
We provide natural examples of dg Courant-Dorfman algebras associated with
commutative dg algebras, which are algebraic analogues of exact
Courant algebroids.

Our main result, Theorem~\ref{thm:main}, associates with every hamiltonian action on a Courant algebroid $E\to M$ a
nondegenerate dg Courant--Dorfman algebra $(\cR,\cE)$ of degree $-1$. If the action
satisfies the regularity assumptions of hamiltonian Courant reduction (Theorem~\ref{thm:geometricCred}),
there is a natural isomorphism
\[
(H^0_\cR,H^1_\cE)
\cong
\bigl(C^\infty(M_{\mathrm{red}}),\Gamma(E_{\mathrm{red}})\bigr)
\]
of Courant--Dorfman algebras. We therefore regard $(\cR,\cE)$ as a homological
model for the reduced Courant algebroid.

The parallel with the classical theory is summarized as follows.

\medskip

\begin{small}
\[
\begin{array}{c|c|c}
\text{reduction data}
&
\text{homological model}
&
\text{structure recovered in cohomology}
\\ \hline
\text{Hamiltonian symplectic action}
&
\text{dg Poisson algebra}
&
\text{Poisson algebra of the reduced space}
\\[1mm]
\text{Hamiltonian Courant action}
&
\text{dg Courant--Dorfman algebra}
&
\begin{gathered}
\text{Courant--Dorfman algebra}\\[-1ex]
\text{of the reduced Courant algebroid}
\end{gathered}
\end{array}
\]
\end{small}

\medskip

The construction has an algebraic core that is independent of
Courant reduction. We show that a bigraded Poisson algebra $\cA$ of
bidegree $(-2,-2)$, equipped with a charge
\[
S=Q+\Theta,
\qquad
\{S,S\}=0,
\]
with $Q\in\cA^{2,3}$ and $\Theta\in\cA^{3,3}$ (see  Definition~\ref{def:bigHPA}), determines a dg Courant--Dorfman  algebra  $(\cR, \cE)$ of degree $-1$,
where $\cR=\cA^{0,\bullet}$ and  $\cE=\cA^{1,\bullet}$. Its structural maps are obtained by
\[
d_\cR r=\{Q,r\},
\qquad
d_\cE e=\{Q,e\},
\qquad
\langle e_1,e_2\rangle=\{e_1,e_2\},
\]
\[
\partial r=\{\Theta,r\},
\qquad
[e_1,e_2]
=
-(-1)^{|e_1|}
\bigl\{\{\Theta,e_1\},e_2\bigr\},
\]
for $r\in \cR$, $e,e_1,e_2 \in \cE$, see Thm~\ref{thm: derived dg CD}. The dg Courant--Dorfman identities follow from the Poisson identities
and the master equation. 
This provides a differential graded extension of the
derived-bracket construction relating Courant algebroids to symplectic
$\bN Q$-manifolds of degree $2$ (cf. \eqref{eq: Courant str RS corresp}).

To apply this algebraic construction to Courant reduction,
we use of the graded-symplectic description of Courant algebroids. A
Courant algebroid is equivalent to a symplectic
$\bN$-manifold of degree $2$ equipped with a self-commuting degree $3$ hamiltonian \cite{Roytenberg}; under this correspondence, a
hamiltonian action on the Courant algebroid becomes a hamiltonian action
of a differential graded Lie algebra (dgla) on the associated graded
symplectic manifold \cite[\S~8]{GGGR}.
The BFV construction for such an action produces a graded Poisson
algebra $\cA_{\scriptscriptstyle{BFV}}$. We endow this algebra with a
second grading, called {\em weight}, for which it becomes a bigraded
Poisson algebra of bidegree $(-2,-2)$ with charge
$S=Q+\Theta$
as above. Here $Q$ is the BRST charge determined by the reduction data,
while $\Theta$ combines the Courant hamiltonian with the dgla
differential. The general construction then yields the dg
Courant--Dorfman algebra $(\cR,\cE)$ from the weight-$0$ and weight-$1$
components of $\cA_{\scriptscriptstyle{BFV}}$.

In the exact case, the BFV algebra
admits a description as a shifted-cotangent bundle, and the resulting dg
Courant--Dorfman algebra becomes the ``standard'' Courant
algebroid of a dg manifold. 
In a special case, this viewpoint relates the BFV algebra $\cA_{\scriptscriptstyle{BFV}}$ to the Kalkman
BRST model for equivariant cohomology \cite{Kalkman}.

In forthcoming work, we will incorporate the reduction of Dirac and generalized complex strutures into this homological setting. More broadly, the framework developed here admits natural extensions from differential graded structures to their $\infty$-analogues as well as their global counterparts (in the spirit of \cite{CDSZ,Nikolay}).

\smallskip

\noindent{\bf Structure of the paper}. Section~\ref{sec:dgCD} introduces dg
Courant--Dorfman algebras, their cohomology, and presents standard and twisted
examples. Section~\ref{sec:Cred} reviews hamiltonian dgla actions on
Courant algebroids and Courant reduction, following
\cite[\S~8]{GGGR}, and states Theorem~\ref{thm:main}.

Section~\ref{sec:hamPA} develops the general construction from
bigraded hamiltonian Poisson algebras to dg Courant--Dorfman algebras.
Section~\ref{sec:graded} recalls the graded-symplectic description of
Courant algebroids and their hamiltonian actions.
Section~\ref{sec:BFV} constructs the associated BFV algebra;  Section~\ref{sec:proof} places
it in the framework of Section~\ref{sec:hamPA} and proves
Theorem~\ref{thm:main}.

Section~\ref{sec:interpret} gives a graded-geometric
interpretation of the construction, including the shifted-cotangent
description in the exact case and its relation with the Kalkman BRST model.

\medskip

\paragraph{{\bf Acknowledgments.}}
H. B. thanks FAPERJ (CNE) and CNPq for financial support. P. H. C. was supported by the
Czech Science Foundation (GA\v{C}R), grant no.~24-10031K. This work grew
out of the second author's PhD thesis \cite{thesis}, 
supported by CNPq. We are grateful to D. Alvarez, A. Cabrera, M. Cueca, and A. Kotov for helpful discussions.

\section{Dg Courant-Dorfman algebras}\label{sec:dgCD}

We introduce a differential-graded version of the Courant-Dorfman algebras of \cite{Roytenberg CD}, whose role in homological Courant reduction is parallel to that of differential graded Poisson algebras in the classical homological reduction of Poisson structures \cite{Kostant,Stasheff}.

Throughout the paper, we use $\bR$ as the ground field for (graded) vector spaces and algebras.


\subsection{Definition and shifts}
Let $(\cR, \dR)$ be a commutative differential graded (dg) algebra, that is, a graded vector space $\cR = \oplus_{k \in \bZ} \cR^k$  with a graded commutative associative product, 
$$
r_1 r_2 = (-1)^{\m{r_1} \m{r_2}} r_2 r_1,
$$  
where $\m{r_i}$ denotes the degree of a homogeneous element $r_i$,
and a differential $\dR: \cR^\bullet \rightarrow \cR^{\bullet+1}$ satisfying 
\begin{equation}\label{eq:leib1}
\dR(r_1 r_2) = \dR(r_1) r_2 + (-1)^{\m{r_1}} r_1 \dR(r_2).
\end{equation}
A dg module $(\cE, \dE)$ over $(\cR, \dR)$ is graded vector space $\cE = \oplus_{\ell \in \bZ} \cE^\ell$ with an (left) action of  $\cR,$ 
$$
\cR^k \otimes \cE^\ell \rightarrow \cE^{k + \ell}, \quad\; r\otimes e \longmapsto re, 
$$
and a differential $\dE : \cE^\bullet \rightarrow \cE^{\bullet + 1},$
such that
\begin{equation}\label{eq:leib2}
\dE(re)=\dR(r)e+ (-1)^{|r|}r \, \dE(e).
\end{equation}
We note that $\cE$ also carries a right-module structure via $er:=(-1)^{\m{r}\m{e}} r e$.

If $\cE$ is a dg $\cR$-module, then the space of graded linear homomorphisms
$
\cE^\vee:= \mathrm{Hom}_{\cR}(\cE,\cR)
$
is also a dg $\cR$-module with differential $d_{\cE^\vee}$ given by 
\begin{equation}\label{eq:dualdiff}
(d_{\cE^ \vee}\xi)(e) := d_\cR(\xi(e))- (-1)^{\m{\xi}}\xi(d_\cE e).
\end{equation}

\begin{definition}\label{def:dgDC}
     A {\em differential graded (dg) Courant-Dorfman algebra} $(\cR,\cE)$ of degree $-m$ consists of a dg-algebra $(\cR, d_{\scriptscriptstyle{\cR}})$ and a dg-module $(\cE, d_{\scriptscriptstyle{\cE}})$ over $\cR$ together with
    \begin{itemize}      
        \item[(a)] an $\cR$-bilinear form  $\langle \cdot, \cdot \rangle: \cE \otimes_{\cR} \cE \rightarrow \cR$ of degree $-2m$, 
        $$
        \SP{\cE^k,\cE^l}\subseteq \cR^{k+l-2m}, \quad \mathrm{and} \quad  \SP{re_1,e_2} =  r \SP{e_1,e_2}, \;\;\; \SP{e_1,re_2}=(-1)^{r \m{e_1}}r \SP{e_1,e_2},
        $$
satisfying 
$$
\SP{e_1,e_2}=(-1)^{|e_1||e_2|+m}\SP{e_2,e_1},
$$     
        \item[(b)] a derivation $\partial: \cR \rightarrow \cE$ of degree $m$, i.e., $\partial(\cR^k)\subseteq \cE^{k+m}$ and
        $$
        \partial(r_1 r_2) = \partial(r_1)r_2  + (-1)^{m \m{r_1}}r_1 \partial(r_2),
        $$
        \item[(c)] a Leibniz bracket $[\cdot, \cdot]: \cE \otimes_{\cR} \cE \rightarrow \cE$ of degree $-m$: $[\cE^k, \cE^l]\subseteq \cE^{k+l-m}$ and 
        $$
        [e_1, [e_2,e_3]] = [[e_1, e_2], e_3] + (-1)^{(|e_1| - m)(|e_2|- m)}[ e_2, [e_1,e_3]], 
        $$        
    \end{itemize}
    satisfying the following conditions: for  $e, e_1, e_2, e_3 \in \cE$ and $r, r_1, r_2 \in \cR,$
    \begin{enumerate}[label=(\roman*.)]
        \item $\dE [e_1, e_2] = [\dE e_1 ,  e_2] + (-1)^{|e_1| - m } [e_1, \dE e_2]$;
        \item the map 
$\flat: \cE\longrightarrow \cE^\vee$,
$e\longmapsto \SP{e,\cdot}$,
is a morphism of dg $\cR$-modules; equivalently,
$$
\dR \langle e_1, e_2 \rangle = \langle \dE e_1, e_2 \rangle + (-1)^{|e_1|} \langle e_1, \dE e_2 \rangle;$$ 
        
        \item  $\dE \circ \partial = (-1)^m \partial \circ \dR$, i.e., $\partial$ is a chain map of degree $m$,
     \end{enumerate}   
  and     
      \begin{enumerate}  
        
        \item $\langle e_1, \partial \langle e_2, e_3 \rangle  \rangle = \langle [e_1, e_2], e_3 \rangle + (-1)^{(|e_1| - m) |e_2|} \langle e_2 , [e_1, e_3] \rangle$, 
        
        \item $[e_1, e_2] + (-1)^{(|e_1| - m)|(|e_2| -m)} [e_2, e_1] = (-1)^{m(|e_1|-m)}\partial \langle e_1, e_2 \rangle 
        = (-1)^{m(|e_1|-1)}\partial \langle e_1, e_2 \rangle$, %
         \item $[\partial r, e] = 0$,
        
        \item $[e_1, re_2] = \langle e_1, \partial r\rangle e_2 + (-1)^{(|e_1| - m)|r|}r[e_1, e_2]$,

        \item $\langle \partial r_1, \partial r_2  \rangle = 0$.
    \end{enumerate}
\hfill $\diamond$
\end{definition}

When  $\cR$ is concentrated in degree $0$ and $\cE$ is concentrated in degree $m$, one recovers the usual definition of a Courant-Dorfman algebra from \cite{Roytenberg CD}.

The next result explains how to change the degree of a dg Courant-Dorfman algebra by means of shifts.
Let $(\cR,\cE)$ be a dg Courant-Dorfman algebra of degree $0$,
and consider the shifted space
$\cF:=\cE[-m]$, so that the shift map $s:\cE\to\cF$ has degree $m$.
One may regard $\cF$ as a dg $\cR$-module by keeping the
$\cR$-module structure and differential unchanged (i.e.,
$r s(e)=s(re)$, and
$d_\cF\circ s=s\circ d_\cE$).

\begin{proposition}\label{prop:shifts}
The pair $(\cR,\cF)$, endowed with  the bracket, derivation and pairing given by
\begin{equation}\label{eq:shiftedCD}
[s(e_1),s(e_2)]_{\scriptscriptstyle{\cF}}
:=s([e_1,e_2]),
\quad
\partial_{\scriptscriptstyle{\cF}}r
:=(-1)^{m|r|}s(\partial r),
\quad
\SP{s(e_1),s(e_2)}_{\scriptscriptstyle{\cF}}
:=(-1)^{m|e_2|}\SP{e_1,e_2}
\end{equation}
is a dg Courant-Dorfman algebra of degree $-m$.
\end{proposition}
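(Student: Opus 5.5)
The proof is a direct verification. Every structure on $\cF$ is transported from $\cE$ through the degree-$m$ isomorphism $s$, and for each axiom of Definition~\ref{def:dgDC} (degree $-m$) on $(\cR,\cF)$ we reduce to the matching axiom for $(\cR,\cE)$ (degree $0$). The only real content is tracking signs. Throughout, write $f_i=s(e_i)$, so that $|f_i|=|e_i|+m$, and hence $|f_i|-m=|e_i|$.

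\emph{Degrees and module structure.} Since $s$ has degree $m$, we get $[\cF^k,\cF^l]\subseteq\cF^{k+l-m}$, $\partial_\cF(\cR^k)\subseteq\cF^{k+m}$ and $\SP{\cF^k,\cF^l}_\cF\subseteq\cR^{k+l-2m}$.

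\emph{Bracket axioms.} Because $|f_i|-m=|e_i|$, every sign in the Leibniz identity (c) and in axiom (i) for $[\cdot,\cdot]_\cF$ is identical to the degree-$0$ sign. These axioms therefore follow by applying $s$, using $d_\cF s=s\, d_\cE$.

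\emph{Linearity of the pairing.} We must account for the convention $r s(e)=s(re)$. For the first slot, $\SP{rf_1,f_2}_\cF=\SP{s(re_1),s(e_2)}_\cF=(-1)^{m|e_2|}\SP{re_1,e_2}=r\SP{f_1,f_2}_\cF$. For the second slot, $\SP{f_1,rf_2}_\cF=(-1)^{m(|r|+|e_2|)}(-1)^{|r||e_1|}r\SP{e_1,e_2}$. Since $|r||f_1|=|r||e_1|+m|r|$, this equals $(-1)^{|r||f_1|}r\SP{f_1,f_2}_\cF$, as required.

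\emph{Symmetry.} $\SP{f_2,f_1}_\cF=(-1)^{m|e_1|}(-1)^{|e_1||e_2|}\SP{e_1,e_2}$. A short parity computation gives $|f_1||f_2|+m\equiv |e_1||e_2|+m|e_1|+m|e_2|$ (mod 2), using $m^2+m\equiv 0$. Hence $\SP{f_1,f_2}_\cF=(-1)^{|f_1||f_2|+m}\SP{f_2,f_1}_\cF$.

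\emph{Derivation.} For (b), compute $\partial_\cF(r_1r_2)=(-1)^{m(|r_1|+|r_2|)}s(\partial r_1\, r_2+r_1\partial r_2)$. Next, move $r_2$ past $s$: the right module structure gives $s(e)r=(-1)^{|r||e|+|r|m}\cdot(-1)^{|r||e|}s(er)$, i.e.\ a factor $(-1)^{m|r|}$. With this, the signs combine to $\partial_\cF r_1\,r_2+(-1)^{m|r_1|}r_1\partial_\cF r_2$.

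\emph{Chain-map axioms.} For (iii), $d_\cF\partial_\cF r=(-1)^{m|r|}s\,d_\cE\partial r$ and $\partial_\cF d_\cR r=(-1)^{m(|r|+1)}s\,\partial d_\cR r$; their ratio is the required $(-1)^m$. For (ii), $\SP{d_\cF f_1,f_2}_\cF+(-1)^{|f_1|}\SP{f_1,d_\cF f_2}_\cF$ picks up the signs $(-1)^{m|e_2|}$ and $(-1)^{m(|e_2|+1)+|e_1|+m}=(-1)^{m|e_2|+|e_1|}$. This matches $d_\cR$ applied to $(-1)^{m|e_2|}\SP{e_1,e_2}$.

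\emph{Axioms (1)--(6).} These are the main obstacle: each mixes two or three of the twisted structures. For each one I would expand both sides in terms of $e_i$, factor out a common sign, and compare exponents modulo $2$, repeatedly using $m^2\equiv m$.
\begin{itemize}
\item For (1), the left side $\SP{f_1,\partial_\cF\SP{f_2,f_3}_\cF}_\cF$ carries $(-1)^{m|e_3|}\cdot(-1)^{m(|e_2|+|e_3|)}\cdot(-1)^{m(|e_2|+|e_3|)}=(-1)^{m|e_3|}$, the last factor coming from the pairing against $s(\partial\cdots)$, whose $\cE$-degree is $|e_2|+|e_3|$. Each term on the right also carries $(-1)^{m|e_3|}$, and its internal sign $(|f_1|-m)|f_2|=|e_1|(|e_2|+m)$ differs from the degree-$0$ sign $|e_1||e_2|$ by $m|e_1|$. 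I expect a parallel factor to appear on the left, so that all exponents agree. If the bookkeeping fails, I would suspect an extra $(-1)^{m|e_1|}$ hidden in the left side and recheck it.
\item For (2), compare $s([e_1,e_2]+(-1)^{|e_1||e_2|}[e_2,e_1])=s(\partial\SP{e_1,e_2})$ with $(-1)^{m|e_1|}\partial_\cF\SP{f_1,f_2}_\cF=(-1)^{m|e_1|}(-1)^{m|e_2|}(-1)^{m(|e_1|+|e_2|)}s\partial\SP{e_1,e_2}$, which agree.
\item (3) and (6) are immediate from $\partial_\cF r=\pm s(\partial r)$.
\item (4) follows by combining the module-sign rule with (1)-type signs: $[f_1,rf_2]_\cF=s([e_1,re_2])$, and $\SP{f_1,\partial_\cF r}_\cF f_2$ picks up $(-1)^{m|r|}(-1)^{m|r|}=1$.
\end{itemize}
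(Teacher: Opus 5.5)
Your strategy is the natural one: transport every structure through $s$ and compare sign exponents. The paper states this proposition without proof, so this direct check is what it implicitly intends. Your computations for the degrees, both linearity conditions, symmetry, (b), (c), (i)--(iii), (2), (3) and (4) are correct.

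The gap is axiom (1), which you leave unresolved, and the intermediate claim you base it on is wrong. It is not true that each term on the right carries $(-1)^{m|e_3|}$. Since $[f_1,f_3]_\cF=s([e_1,e_3])$ with $[e_1,e_3]\in\cE^{|e_1|+|e_3|}$, the definition of $\SP{\cdot,\cdot}_\cF$ gives
\[
\SP{f_2,[f_1,f_3]_\cF}_\cF=(-1)^{m(|e_1|+|e_3|)}\SP{e_2,[e_1,e_3]}.
\]
The prefactor is $(-1)^{(|f_1|-m)|f_2|}=(-1)^{|e_1||e_2|+m|e_1|}$, so the two factors $(-1)^{m|e_1|}$ cancel inside this second term itself. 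The right side is therefore exactly
\[
(-1)^{m|e_3|}\bigl(\SP{[e_1,e_2],e_3}+(-1)^{|e_1||e_2|}\SP{e_2,[e_1,e_3]}\bigr).
\]
By the degree-$0$ axiom (1), this equals your (correctly computed) left side $(-1)^{m|e_3|}\SP{e_1,\partial\SP{e_2,e_3}}$.

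Your fallback, looking for a compensating $(-1)^{m|e_1|}$ on the left, points in the wrong direction. The left side has no such factor, as your own computation shows, so that plan would not close the argument. Since the whole proposition is a sign check, this axiom has to be verified rather than anticipated. A minor point: Definition~\ref{def:dgDC} has only axioms (1)--(5). The one that is immediate alongside (3) is (5), via $\SP{\partial_\cF r_1,\partial_\cF r_2}_\cF=\pm\SP{\partial r_1,\partial r_2}=0$.
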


\begin{remark}
Alternatively, we may regard $\cF$ as a dg $\cR$-module for which
the shift map $s:\cE\to\cF$ is an isomorphism of dg $\cR$-modules
of degree $m$ (i.e., $r s(e)=(-1)^{m|r|}s(re)$, and
$d_\cF\circ s=(-1)^m s\circ d_\cE$),
in which case a dg Courant-Dorfman algebra structure of degree $-m$
on $(\cR,\cF)$ is defined by
\[
[s(e_1),s(e_2)]_{\scriptscriptstyle{\cF}}
:=s([e_1,e_2]),
\qquad
\partial_{\scriptscriptstyle{\cF}}
:=s\circ\partial,
\qquad
\SP{s(e_1),s(e_2)}_{\scriptscriptstyle{\cF}}
:=(-1)^{m|e_1|}\SP{e_1,e_2}.
\]
The degree $-m$ Courant-Dorfman algebra of Prop.~\ref{prop:shifts} is isomorphic to this one
via
\[
\cF\longrightarrow\cF,
\qquad
f\longmapsto(-1)^{m|f|}f.
\]

\hfill $\diamond$
\end{remark}

The following examples of dg Courant-Dorfman algebras are described in detail 
in App.~\ref{app:twisted}, including basic definitions and sign conventions.

\begin{example}[Standard]\label{ex:standard} Let $(\cR, \dR)$ be a commutative dg algebra. Let $\Der(\cR)$ be the dg $\cR$-module of graded derivations, with differential $d_\Der= [\dR, \cdot]$ defined by the graded commutator of derivations. Let $(\Omega^1_\cR= \Der(\cR)^\vee, d_\Omega)$ be the dual dg-module, with differential \eqref{eq:dualdiff} (for our purposes, there will be no need for Kahler differentials).

The \emph{standard Courant-Dorfman algebra} (of degree $0$) is $(\cR, \bT_\cR)$, where the dg $\cR$-module is
$$
\mathbb{T}_\cR:= \Der(\cR)\oplus \Omega^1_\cR, 
$$
with differential
$$
\qquad d_{\bT}(X,\alpha)=(d_\Der(X), d_\Omega(\alpha)),
$$
equipped with the pairing 
\begin{equation}\label{eq:stdpair}
\SP{e_1,e_2} = i_X\beta + (-1)^{\m{e_1}\m{e_2}}i_Y\alpha,
\end{equation}
for $e_1=(X,\alpha)$, $e_2=(Y,\beta)$, the
bracket
\begin{equation*}
\Cour{(X,\alpha), (Y,\beta)}=([X,Y], \Lie_X\beta - (-1)^{\m{e_1}\m{e_2}}i_Yd\alpha),\label{eq:brktsdt exm}
\end{equation*}
and derivation $\partial: \cR \to \bT_\cR$ given by
\begin{equation}\label{eq:stdpartial}
\partial r = (0, dr). 
\end{equation}

Here $i_X$, $\Lie_X$ and $d$ are the usual (graded) Cartan operators, see $\S$ \ref{subsec:dgcartan}.
\hfill $\diamond$
\end{example}

\begin{example}[Twists]\label{ex:twists}
Let $\Omega^k_\cR = \mathrm{Hom}_\cR(\wedge^k \mathrm{Der}(\cR), \cR)$ be the graded $\cR$-module of $k$-forms, and denote by $\Omega_\cR^{k,\ell}$ the subspace of $k$-forms of internal degree $\ell$.

Let $F\in \Omega_\cR^{2,1}$ and $H\in \Omega_\cR^{3,0}$ be such that
\begin{equation}\label{eq:FH}
d_\Omega F=0, \quad dH=0, \quad d_\Omega H=dF.
\end{equation}
Then $d_{\bT,F}: \bT_\cR\to \bT_\cR$,
\begin{equation}\label{eq:Fdiff}
d_{\bT,F}(X,\alpha) := (d_\Der(X), d_\Omega(\alpha) + (-1)^{\m{X}}i_XF),
\end{equation}
is a differential that makes $\bT_\cR$ into a dg module over $\cR$ in such a way that the pairing \eqref{eq:stdpair}, derivation \eqref{eq:stdpartial} and bracket
\begin{equation}\label{eq:twisted-bracket}
\Cour{(X,\alpha), (Y,\beta)}_H=([X,Y], \Lie_X\beta - (-1)^{\m{e_1}\m{e_2}}i_Yd\alpha + (-1)^{\m{X}\m{Y}}i_Yi_XH)
\end{equation}
define a dg Courant-Dorfman algebra of degree $0$.

We call it the \emph{$(F,H)$-twisted dg Courant-Dorfman algebra} structure on $\bT_\cR$.
\hfill $\diamond$
\end{example}

\begin{example}[Shifts] \label{ex:shifted}

One can shift the previous example to obtain a dg Courant-Dorfman algebra structure of degree $-m$ on $(\cR, \bT_\cR[-m])$, following Prop.~\ref{prop:shifts}.

Consider $F \in \Omega^{2,1}_\cR$ and $H\in \Omega_\cR^{3,0}$ satisfying \eqref{eq:FH}. By shifting their degrees, they define elements $\bm F \in (\Omega_\cR^2[-2m])^{2m+1}$ and $\bm H \in (\Omega_\cR^3[-3m])^{3m}$.  In terms of the shifted Cartan operations 
$$
\bm{i}_X, \qquad \bm{\Lie}_X, \qquad \bm{d}
$$ 
defined in $\S$ \ref{subsection:shifts}, the shifted structures \eqref{eq:shiftedCD} are expressed as follows: the differential on $\cE= \bT_\cR[-m]=\Der(\cR)[-m]\oplus \Omega_\cR^1[-m]$ is
\begin{equation}\label{eq:dFt}
\dE(X,\alpha) = (d_\Der X , d_\Omega \alpha + (-1)^{\m{X}}\bm{i}_X \bm{F}),
\end{equation}
the pairing is 
\begin{equation}\label{eq:canpairm}
\SP{(X,\alpha),(Y,\beta)}=\bm{i}_X\beta+(-1)^{|e_1||e_2|+m}\bm{i}_Y\alpha,
\end{equation}
for homogeneous $e_1=(X,\alpha)$, $e_2=(Y,\beta)$, the degree $m$ derivation is
\begin{equation}\label{eq:dm}
\partial r=(0,\bm{d}r),
\end{equation}
and the Leibniz bracket is
\begin{equation}\label{eq:lm}
\Cour{(X,\alpha),(Y,\beta)}_H=(\bm{[}X,Y\bm{]},\bm{\Lie}_X\beta-(-1)^{(|e_1|-m)(|e_2|-m)}\bm{i}_Y\bm{d}\alpha + (-1)^{(\m{X}-m)\m{Y}}\bm{i}_Y\bm{i}_X \bm{H}).
\end{equation}

When $F=0$, $H=0$, we call it the {\em standard dg Courant-Dorfman algebra of degree $-m$}.

\hfill $\diamond$
\end{example}

\subsection{Passing to cohomology}

Similarly to other dg structures, passing to cohomology turns dg Courant-Dorfman algebras into ordinary Courant-Dorfman algebras.


\begin{proposition}\label{prop: CD algebra in cohomology} For a dg Courant-Dorfman algebra $(\cR,\cE)$ of degree $-m$, the pair $(H^0_{\cR},H^m_{\cE})$ inherits a Courant-Dorfman algebra structure. 
\end{proposition}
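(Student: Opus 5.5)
The plan is to show that every structural map of $(\cR,\cE)$ descends to cohomology in the right degrees, and then read off the axioms from those already holding at chain level. First, $H_\cR=\ker\dR/\mathrm{im}\,\dR$ is a graded commutative algebra: by \eqref{eq:leib1}, cocycles form a subalgebra and coboundaries an ideal in it. In particular $H^0_\cR$ is an ordinary commutative algebra. In the same way, \eqref{eq:leib2} shows that $H^m_\cE$ is an $H^0_\cR$-module: for $\dR r=0$ and $\dE e=0$ we have $\dE(re)=0$; also $(\dR r')e=\dE(r'e)$ when $\dE e=0$, and $r\,\dE e'=\dE(re')$ when $\dR r=0$ and $|r|=0$.

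Next I treat the three operations using the compatibility conditions (i)--(iii) of Definition~\ref{def:dgDC}.
\begin{itemize}
\item \emph{Pairing.} By (ii), $\dR\SP{e_1,e_2}=\SP{\dE e_1,e_2}\pm\SP{e_1,\dE e_2}$. Hence the pairing of two cocycles is a cocycle, and pairing a cocycle with a coboundary gives $\SP{\dE e_1',e_2}=\dR\SP{e_1',e_2}$, a coboundary. Since $\SP{\cE^m,\cE^m}\subseteq\cR^{0}$, this gives a well-defined symmetric pairing $H^m_\cE\otimes H^m_\cE\to H^0_\cR$. The symmetry sign is $(-1)^{m^2+m}=1$, and $\cR$-bilinearity descends.
\item \emph{Derivation.} By (iii), $\partial$ is a chain map up to sign, so it maps cocycles to cocycles and coboundaries to coboundaries. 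It induces $H^0_\cR\to H^m_\cE$, and the Leibniz rule descends with sign $(-1)^{m\cdot 0}=1$.
\item \emph{Bracket.} By (i), the bracket of two cocycles is a cocycle. Moreover $[\dE e_1',e_2]=\dE[e_1',e_2]$ when $\dE e_2=0$, and $[e_1,\dE e_2']=\pm\dE[e_1,e_2']$ when $\dE e_1=0$. This gives $[\cdot,\cdot]:H^m_\cE\otimes H^m_\cE\to H^{m}_\cE$, using the degree count $m+m-m=m$.
\end{itemize}

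Finally, each of the axioms (a)--(c) and (1)--(5) for Courant--Dorfman algebras holds on representatives, since the chain-level identities hold exactly. In degree $|e_i|=m$, $|r|=0$ all signs reduce to those of the ordinary definition. For example, $(-1)^{(|e_1|-m)(|e_2|-m)}=1$ and $(-1)^{m(|e_1|-1)}=(-1)^{m(m-1)}=1$. Thus one recovers the ungraded Leibniz identity, the symmetric-part identity $[e_1,e_2]+[e_2,e_1]=\partial\SP{e_1,e_2}$, and the others, and one obtains a Courant--Dorfman algebra in the sense of \cite{Roytenberg CD}.

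The only step needing care is well-definedness of the module-type identities (4) and the bracket's $\cR$-linearity. These involve products $re$ where $r$ is a degree-$0$ cocycle and $e$ is a cocycle, so all terms remain cocycles. Each identity is an equality of chain-level elements, so no extra correction terms arise. The main bookkeeping obstacle is just checking the signs in (i) and (ii) when one argument is a coboundary; I expect nothing deeper.
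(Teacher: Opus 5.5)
Your proposal is correct and follows essentially the same route as the paper: conditions (i.)--(iii.) of Definition~\ref{def:dgDC} show that cocycles are closed under the bracket, pairing and derivation, and that terms involving a coboundary are again coboundaries. The structure on $(\cR^0,\cE^m)$ therefore descends to $(H^0_\cR,H^m_\cE)$, and the Courant--Dorfman identities hold on representatives, with all signs trivial in degrees $(0,m)$. The only slip is verbal: the bracket is not $\cR$-linear but satisfies the Leibniz rule (4), which, as you note, holds on cocycle representatives and hence in cohomology.
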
 

\begin{proof} First note that $(\cR^0,\cE^m)$ is a Courant-Dorfman algebra. The associative product on $\cR^0$ descends to $H^0_{\cR}$ because $\dR$ is a derivation. Since $(\cE,\dE)$ is a dg module over $(\cR,\dR)$ and $\cE^m$ is a $\cR^0$-module, it follows that $H^m_{\cE}$ is an $H^0_{\cR}$-module. We now verify that the structural maps of the Courant-Dorfman algebra $(\cR^0,\cE^m)$ induce a Courant-Dorfman algebra structure on $(H^0_{\cR},H^m_{\cE})$. Let 
\[ 
Z^m_\cE\coloneqq \ker(\dE:\cE^m\rightarrow\cE^{m+1}), \qquad Z^0_\cR\coloneqq \ker(\dR:\cR^0\rightarrow\cR^1). 
\] 
From condition (i.) in Def.~\ref{def:dgDC}, it follows that
\[ 
[Z^m_\cE,Z^m_\cE]\subseteq Z^m_\cE, 
\] and that 
\[ [\dE(\cE^{m-1}),Z^m_\cE] + [Z^m_\cE,\dE(\cE^{m-1})] \subseteq \dE(\cE^{m-1}). 
\] 
Consequently, 
\begin{equation}\label{eq:bracketchom} [\cdot,\cdot]: H^m_{\cE}\otimes_{\bR}H^m_{\cE} \longrightarrow H^m_{\cE}, \qquad [\overline{e_1},\overline{e_2}] \coloneqq \overline{[e_1,e_2]}, 
\end{equation} 
is well-defined. On the other hand, condition (ii.) implies that 
\[ \SP{Z^m_\cE,Z^m_\cE}\subseteq Z^0_\cR \] 
and 
\[ \SP{\dE(\cE^{m-1}),Z^m_\cE} + \SP{Z^m_\cE,\dE(\cE^{m-1})} \subseteq \dR(\cR^{-1}), 
\] 
which ensures that 
\begin{equation}\label{eq:spchom} \SP{\cdot,\cdot}: H^m_{\cE}\otimes_{H^0_{\cR}}H^m_{\cE} \longrightarrow H^0_{\cR}, \qquad \SP{\overline{e_1},\overline{e_2}} \coloneqq \overline{\SP{e_1,e_2}}, 
\end{equation} 
is also well-defined. Since
\[ 
\dE\circ\partial=(-1)^m\partial\circ\dR, 
\] 
the map $\partial:\cR\rightarrow\cE$ also descends to cohomology, yielding \begin{equation}\label{eq:partialchom} \partial: H^0_{\cR}\longrightarrow H^m_{\cE}, \qquad \partial(\overline r) \coloneqq \overline{\partial r}. \end{equation} A routine verification shows that the structural maps \eqref{eq:bracketchom}, \eqref{eq:spchom}, and \eqref{eq:partialchom} endow $(H^0_{\cR},H^m_{\cE})$ with the structure of a Courant-Dorfman algebra. 
\end{proof}



\subsection{Nondegenerate pairings}

 If $(\cR,\cE)$ is a dg Courant-Dorfman algebra of degree $-m$, as in Def.~\ref{def:dgDC}, we say that the pairing $\SP{\cdot,\cdot}$ is (strongly) {\em nondegenerate} if
\[
\flat: \cE\longrightarrow \cE^\vee:= \mathrm{Hom}_{\cR}(\cE,\cR),
\qquad
e\longmapsto \SP{e,\cdot},
\]
is a bijection, and hence an isomorphism of dg $\cR$-modules (by (ii.)).
In such a case, the derivation $\partial$ is equivalent to a homogeneous map of degree $-m$,
$$
\rho:\cE\to \Der(\cR),
$$
defined by
$$
\rho(e)(r)=\SP{e,\partial r},
$$
referred to as the {\em anchor}. (Note that
$\rho(re)=r\rho(e)$, so $\rho$ is not $\cR$-linear in the graded sense). 
We can rewrite the various axioms in Def.~\ref{def:dgDC} in terms of $\rho$:

\begin{itemize}
\item Condition (iii.) says that $\rho: \cE\to \Der(\cR)$ intertwines differentials: 
\begin{equation}\label{eq:rhointertwine}
\rho\circ \dE =d_{\Der}\circ \rho.
\end{equation}

\item Conditions (1), (2) become, respectively,
 \begin{itemize}
\item $\rho(e_1)\SP{e_2,e_3} =  \langle [e_1, e_2], e_3 \rangle + (-1)^{(|e_1| - m) |e_2|} \langle e_2 , [e_1, e_3] \rangle$,
 \item
$ \SP{e, [e_1, e_2] + (-1)^{(|e_1| - m)|(|e_2| -m)} [e_2, e_1]} = (-1)^{m(|e_1|- m)}\rho(e)\SP{e_1, e_2},$ 
\end{itemize}

\item Assuming that (1) and (2) hold, condition (3) is equivalent to 
$$
\rho([e_1, e_2]) = [\rho(e_1), \rho(e_2)],
$$ 
for all $e_1,e_2\in \cE$. Condition (4) becomes
\begin{equation}\label{eq:CDleib}
[e_1, re_2] = (\rho(e_1) r )e_2 + (-1)^{(|e_1| - m)|r|}r[e_1, e_2],
\end{equation}
while (5) says that $\rho$ and $\partial$ satisfy $\rho\circ \partial=0$.
\end{itemize}

Assuming the nondegeneracy of the pairing, we can also simplify Def.~\ref{def:dgDC} by removing redundancies. One can check that

\begin{itemize}   
\item Condition (4) follows from (1) and (2), 

\item If the pairing is full (i.e., $\mathrm{span}_\cR\{\SP{e, e'},\; e,e'\in \cE \}=\cR$), condition (3) follows from (2) and the Jacobi-like identity in (c).

\item If $\cE$ is a faithful module (i.e., $r\cE=0$ implies that $r=0$, for all $r\in \cR$), then (5) follows from (3) and (4).

\end{itemize}  

\begin{example}[Dg Courant algebroids] A {\em Courant algebroid} \cite{LWX,Roytenberg} structure on a vector bundle $E\to M$ is the same as an ungraded (concentrated in degree $0$) Courant-Dorfman algebra with nondegenerate pairing on $(C^\infty(M),\Gamma(E))$.  

More generally, the main examples of dg Courant-Dorfman algebras in this paper will be defined on pairs 
$(\cR, \cE)$, where $\cR=C(\cM)$ is the algebra of function on a dg manifold $\cM$ and $\cE=\Gamma(\mathcal V)$ is the module of sections of a dg vector bundle $\mathcal{V}\to \cM$, and the pairing is nondegenerate. By analogy, we should view these examples as  {\em dg Courant algebroids}.
In these geometric examples, the pairings are full, and the modules are faithful.

We note that our notion of dg Courant algebroids does not exactly match the one in \cite{Vysoky};
for instance, in the latter the pairing and Leibniz bracket are required to have the same degree, whereas this is not the case in Def.~\ref{def:dgDC}.
\hfill $\diamond$
\end{example}

\section{Hamiltonian reduction of Courant algebroids}\label{sec:Cred}

In this section, we review the procedure of hamiltonian reduction of Courant algebroids  described in \cite[$\S$ 8.3]{GGGR}, extending the reduction of exact Courant algebroids from \cite{Bursztyn}.

To setup our notation, we consider Courant algebroids defined by a vector bundle $E\rightarrow M$ equipped with a pseudo-riemannian metric $\SP{\cdot,\cdot}$
(i.e.,  a fibrewise non-degenerate, symmetric bilinear pairing), anchor map $\rho:E\rightarrow TM$ and an bracket $\Cour{\cdot,\cdot}:\Gamma(E)\times \Gamma(E)\rightarrow \Gamma(E)$.

\begin{example}
A Courant algebroid over a point is a quadratic Lie algebra, i.e., a Lie algebra equipped a symmetric, nondegenerate and ad-invariant bilinear pairing. \hfill $\diamond$
\end{example}

\begin{example}\label{ex:exactCA}
Given a manifold $M$, any closed 3-form $\eta \in \Omega^ 3(M)$ defines a Courant algebroid $E=TM\oplus T^*M$ with fibrewise pairing $\SP{(X,\alpha),(Y,\beta)}=\beta(X)+\alpha(Y)$ and bracket 
$$
\Cour{(X,\alpha),(Y,\beta)}:= ([X,Y],\Lie_X\beta - i_Yd\alpha + i_Yi_X\eta)
$$
for $X,Y \in \mathfrak{X}(M)$ and $\alpha,\beta\in \Omega^1(M)$; see \cite{SeWe}.
Courant algebroids of this type are called {\em exact}. \hfill $\diamond$
\end{example}

Given a pseudo-riemannian vector bundle $(E,\SP{\cdot,\cdot})$, its infinitesimal automorphisms are codified by the {\em Atiyah algebroid} $\bA_E\to M$, whose sections are derivation of $E$ that preserve $\SP{\cdot,\cdot}$. I.e., each section of $\bA_E$ is a pair $(D,X)$, where $D:\Gamma(E)\to \Gamma(E)$ is an $\mathbb{R}$-linear map, $X\in \mathfrak{X}(M)$, satisfying
\begin{align*}
D(fe) & = (\Lie_X f)e + f D(e),\\
\Lie_X\SP{e_1,e_2} & = \SP{D(e_1),e_2} + \SP{e_1, D(e_2)},
\end{align*}
for $f\in C^\infty(M)$, $e,e_1,e_2 \in \Gamma(E)$. The Atiyah algebroid is a Lie algebroid with Lie bracket on $\Gamma(\bA_E)$ given by the commutator of derivations and anchor defined by the {\em symbol map}
$$
\sigma: \Gamma(\bA_E)\to \mathfrak{X}(M), \qquad (D,X)\mapsto X.
$$
The Lie algebra of infinitesimal automorphisms of a Courant algebroid $(E, \SP{\cdot,\cdot}, \rho, \Cour{\cdot,\cdot})$ consists of sections $(D,X)\in \Gamma(\bA_E)$ that  preserve the Courant bracket,
$$
D(\Cour{e_1,e_2}) = \Cour{D(e_1),e_2} + \Cour{e_1, D(e_2)}.
$$
This implies that the anchor map is also preserved: $\rho(D(e)) = [X,\rho(e)]$.
On a Courant algebroid, any $e\in \Gamma(E)$ defines an infinitesimal automorphism by 
$$
\mathrm{ad}(e) = \Cour{e,\cdot},
$$ 
with symbol $\rho(e)$.

\subsection{Hamiltonian actions on Courant algebroids}\label{subsection: hmlt actions Courant alg} 
Courant algebroid are naturally acted upon by differential graded (dg) Lie algebras  \cite[$\S$ 8]{GGGR}. Let $\g$, $\a$, and $\h$ be vector spaces, and consider the graded vector space $\ov\g:= \h[2] \oplus \a[1] \oplus \g$, so that elements of $\h$ have degree $-2$, elements of $\a$ have degree $-1$, and elements of $\g$ have degree $0$.

A differential graded Lie algebra (dgla) $(\ov\g, [\cdot,\cdot], \delta)$ can be equivalently expressed by the data $(\g, \a, \h, \delta, \varpi)$, where
\begin{itemize}
\item  $\g$ is a Lie algebra,  $\mathfrak{a}$ and $\h$ are $\g$-modules, and   $\varpi\colon \mathfrak{a} \otimes \mathfrak{a} \to \h$ is a $\g$-equivariant symmetric bilinear map;
\item $\delta$ denotes a sequence of  linear maps,
\begin{equation}\label{eq:deltaseg}
\h \stackrel{\delta}{\longrightarrow} \mathfrak{a} \stackrel{\delta}{\longrightarrow} \g, 
\end{equation}
defining a complex, i.e., $\delta^2=0$, preserving the $\g$-module structures ($\g$ is viewed as a $\g$-module with respect to the adjoint representation) and compatible with $\varpi\colon \mathfrak{a} \otimes \mathfrak{a} \to \h$  in the sense that 
\begin{equation}\label{eq:condsbeta}
\delta \varpi(v_1,v_2) = (\delta v_1) \cdot v_2 + (\delta v_2) \cdot v_1, \quad (\delta v) \cdot w = \varpi(v, \delta w),
\end{equation}
for $v$, $v_1$, $v_2 \in \mathfrak{a}$ and $w \in \h$. 

Whenever necessary for clarity, we use the notation
$$
\delta_\h = \delta|_\h, \qquad \delta_\a:=\delta|_\a.
$$
\end{itemize}

\begin{definition}\label{def:hamact}
A {\em hamiltonian action} of a dgla $(\g, \a, \h, \delta, \varpi)$ on a Courant algebroid $(E\to M, \SP{\cdot,\cdot}, \rho, \Cour{\cdot,\cdot})$ consists of a triple $(\varphi, \varrho, \mu),$ where
\begin{itemize} 
\item  $\varphi\colon \g \to \Gamma(\mathbb{A}_E)$ is a  Lie algebra morphism 
such that 
\begin{equation}\label{eq: varphi calg aut}
u\cdot \Cour{e_1,e_2} =\Cour{u\cdot e_1, e_2}+
\Cour{e_1, u\cdot e_2}, \qquad u\in \g  \quad  e_1, e_2\in \Gamma(E),
\end{equation} 
for $u\cdot e:= \varphi(u)(e)$, i.e.,
 $\varphi$ maps $\g$ to infinitesimal Courant automorphisms; in particular, $E$ is a $\g$-equivariant vector bundle, and we have an induced  $\g$-action on $M$ by
 \begin{equation}\label{eq:Mact}
 \underline{\varphi}:= \sigma\circ \varphi : \g \rightarrow \mathfrak{X}(M),
 \end{equation}
 where $\sigma$ is the symbol map;

\item  $\mu\colon M\to \h^*$  and $\varrho\colon \mathfrak{a}\to \Gamma(E)$  are $\g$-equivariant maps satisfying
\begin{equation}\label{eq:Kisot}
\mu^*(\varpi(v_1,v_2))=\SP{\varrho(v_1),\varrho(v_2)}, \quad v_1, v_2 \in \mathfrak{a};
\end{equation}

\item the maps $\varphi$, $\varrho$, $\mu$ are such the following diagram commutes:
\begin{equation}\label{diagram: hamiltonian act Courant}
    \begin{tikzcd}
        \mathfrak{h} \arrow[d, "\mu^*"] \arrow[r, "\delta"] 
        & \mathfrak{a} \arrow[d, "\varrho"] \arrow[r, "\delta"] 
        & \mathfrak{g} \arrow[d, "\varphi"] \\
        C^\infty(M) \arrow[r, "\rho^* d"] 
        & \Gamma(E) \arrow[r, "\mathrm{ad}"] 
        & \Gamma(\mathbb{A}_E).
\end{tikzcd}
\end{equation}
\end{itemize} 
In the notation above, for $w\in \h$, $\mu^*w \in C^\infty(M)$ is given by $\mu^*w (x)=\mu(x) (w)$. We also use the notation $\mu_w=\mu^*w$. 
\hfill $\diamond$
\end{definition}

When the complex \eqref{eq:deltaseg} defines a short exact sequence 
\begin{equation}\label{eq: exact dgla}
0 \stackrel{}{\longrightarrow} \h \stackrel{\delta}{\longrightarrow} \mathfrak{a} \stackrel{\delta}{\longrightarrow} \g \stackrel{}{\longrightarrow} 0,  
\end{equation}
the ``exact'' dgla $(\ov\g, [\cdot,\cdot], \delta)$ (see $\S$ \ref{subsec:exact}) is equivalent to an exact Courant algebra  in the sense of \cite{Bursztyn} (\cite[Prop.~8.16]{GGGR}), in which case Def.~\ref{def:hamact} recovers the actions on exact Courant algebroids considered in that paper.

\subsection{Courant reduction}\label{subsection: Courant red}
Given a hamiltonian action $(\varphi, \varrho, \mu)$ of $(\g, \a, \h, \delta, \varpi)$ on $(E\to M, \SP{\cdot,\cdot}, \rho, \Cour{\cdot,\cdot}),$ for the reduction procedure we shall assume the following regularity conditions:
\begin{itemize}
\item[(r1)] $0\in \h^*$ is a regular value for $\mu\colon M\to \h^*;$ 
\item[(r2)] for each $x\in \mu^{-1}(0)$, the map 
\begin{equation}\label{eq:aK}
\mathfrak{a}\to E|_x, \quad a \longmapsto \varrho(a)|_x, 
\end{equation}
 is injective. 
\end{itemize}

Note that, by the $\g$-equivariance of $\mu: M \rightarrow \h^*$, for $x \in \mu^{-1}(0)$ we have $\underline{\varphi}(u)|_x \in T_x \mu^{-1}(0)$, and therefore the $\g$-action $\varphi: \g \rightarrow \Gamma(\bA_E)$ on $E$ restricts to a $\g$-action on $E|_{\mu^{-1}(0)}$.

Let $G$ be a connected Lie group integrating $\g$.

\begin{definition}\label{def:regact}
 The hamiltonian action $(\varphi, \varrho, \mu)$ is called {\em regular} if (r1) and (r2) hold, and the $\g$-action on $E|_{\mu^{-1}(0)}$ integrates to a $G$-action whose restriction to $\mu^{-1}(0)$ is free and proper.
 \hfill $\diamond$
\end{definition}



For $(\varphi, \varrho, \mu)$ regular, the images of \eqref{eq:aK} for each $x\in \mu^{-1}(0)$ define a vector subbundle $K$ of $E$ over the submanifold $\mu^{-1}(0)$, 
    \begin{equation}\label{eq:K}
    K\subseteq E|_{\mu^{-1}(0)}.
    \end{equation} 

Due to \eqref{eq:Kisot}, $K$ is isotropic. By the $\g$-equivariance of $\varrho$, 
and since $\g$ acts on $E$ by metric symmetries and $G$ is connected, both $K$ and $K^\perp$ are $G$-invariant. 
We then have a quotient vector bundle  $E_{red}=(K^\perp/K)/G$ over $M_{red}  = \mu^{-1}(0)/G$, fitting into the pullback diagram
\begin{equation}\label{diagram: quotient red}
\begin{tikzcd}
K^{\perp}/K \arrow[r, "\widehat p"] \arrow[d] 
& E_{\mathrm{red}} \arrow[d] \\
\mu^{-1}(0) \arrow{r}{p} 
& M_{\mathrm{red}},
\end{tikzcd}
\end{equation}
defined by the quotient maps; in particular, the pullback of sections $p^*: \Gamma( E_{\mathrm{red}}) \to \Gamma(K^\perp/K)$ identifies $ \Gamma( E_{\mathrm{red}})$ with the space of invariant sections $\Gamma(K^\perp/K)^G$.
For an element $e$ of $K^\perp$, we denote its image in $K^\perp/K$ by $[e]$.

Since the pseudo-riemannian metric $\SP{\cdot, \cdot}$ on $E$ is $G$-invariant, so is the induced metric on  $K^\perp/K$; hence it naturally descends to a pseudo-riemannian metric on $E_{red} = (K^{\perp}/K)/G$. As the next result recalls,  the Courant bracket $\Cour{\cdot, \cdot}$ and anchor $\rho$ can also be  reduced to $E_{red}$.

\begin{theorem}[Hamiltonian reduction of Courant algebroids]\label{thm:geometricCred} 
 Let $(\varphi, \varrho, \mu)$ be a regular hamiltonian action of $(\g, \a, \h, \delta, \varpi)$  on  $(E, \SP{\cdot,\cdot}, \rho, \Cour{\cdot,\cdot}).$  Then the pseudo-riemannian vector bundle $E_{red}\to M_{red}$  inherits a Courant algebroid structure, with anchor $\rho_{red}$ and bracket $\Cour{\cdot, \cdot}_{red}$ defined by 
\begin{equation}\label{eq: red courant str}
    \rho_{red}(\underline{e}) \coloneq Tp(\rho(e)), \quad  p^*\Cour{\underline{e}_1, \underline{e}_2}_{red} \coloneqq [ \,\Cour{ {e}_1, {e}_2}|_{\mu^{-1}(0)} \,],
\end{equation}
where $e\in K^\perp$ is such that $\widehat{p}([e])=\underline{e}$, and $e_1, e_2$ are sections of $E$ such that $e_i|_{\mu^{-1}(0)} = p^*\underline{e}_i$, $i=1,2$.
\end{theorem}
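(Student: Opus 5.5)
The plan is to verify directly that the formulas in \eqref{eq: red courant str} are well defined. Once that is done, the Courant algebroid axioms on $E_{red}$ are inherited from those on $E$, because every operation is computed upstairs and then restricted and projected.

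The first step is to fix the class of sections we work with. Let
\[
\Gamma_{\mathrm{bas}}:=\{e\in\Gamma(E)\;:\; e|_{\mu^{-1}(0)}\in\Gamma(K^\perp),\ \ [e|_{\mu^{-1}(0)}] \text{ is } G\text{-invariant}\}.
\]
Using a $G$-invariant tubular neighbourhood of $\mu^{-1}(0)$ (guaranteed by (r1) and properness), every $\underline e\in\Gamma(E_{red})$ is represented by some $e\in\Gamma_{\mathrm{bas}}$. Invariance of the class $[e]$ means $u\cdot e|_{\mu^{-1}(0)}\in\Gamma(K)$ for all $u\in\g$.

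The key identities come from the commutative diagram \eqref{diagram: hamiltonian act Courant}:
\begin{itemize}
\item for $a\in\a$, $\Cour{\varrho(a),e}=\varphi(\delta a)(e)=(\delta a)\cdot e$;
\item for $w\in\h$, $\varrho(\delta w)=\rho^*d\mu_w$;
\item $\Cour{e,\varrho(a)}=-\Cour{\varrho(a),e}+\rho^*d\SP{e,\varrho(a)}$.
\end{itemize}

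The second step is well-definedness of the anchor. For $e\in K^\perp$ we need $\rho(e)$ tangent to $\mu^{-1}(0)$, which follows from $\rho(e)(\mu_w)=\SP{e,\rho^*d\mu_w}=\SP{e,\varrho(\delta w)}=0$. We also need $Tp(\rho(e))$ to be independent of the representative. Modifying $e$ by $k=\varrho(a)$ changes $\rho(e)$ by $\rho(\varrho(a))=\underline\varphi(\delta a)$, which is tangent to the orbits and hence killed by $Tp$. Finally, $Tp\rho(e)$ must be independent of the point in the orbit; this follows from the equivariance $\rho(u\cdot e)=[\underline\varphi(u),\rho(e)]$.

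The third step, which I expect to be the main obstacle, is well-definedness and closure of the bracket. Four things must be checked.
\begin{enumerate}
\item[(a)] The class $[\Cour{e_1,e_2}|_{\mu^{-1}(0)}]$ depends only on $e_i|_{\mu^{-1}(0)}$. By the Leibniz rule in each slot and the symmetry axiom, sections vanishing on $\mu^{-1}(0)$ are combinations $\sum\mu_w f$. The bracket with such a section produces terms like $\SP{e_1,\rho^*d\mu_w}e_2=\SP{e_1,\varrho(\delta w)}e_2$, which vanish on $\mu^{-1}(0)$ because $e_1\in K^\perp$ there. In the other slot it also produces a term $\rho^*d\SP{\cdot,\cdot}$ proportional to $\varrho(\delta w)\in K$.
\item[(b)] Changing $e_i$ by a section restricting to $\varrho(a)$ changes the result by an element of $K$. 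For $e_1\mapsto e_1+\varrho(a)$ this is $(\delta a)\cdot e_2\in K$ by invariance of $[e_2]$. For $e_2\mapsto e_2+\varrho(a)$ we use the second and third identities above together with $\SP{e_2,\varrho(a)}$ vanishing on $\mu^{-1}(0)$; that function then lies in the ideal generated by the $\mu_w$, which reduces this case to (a).
\item[(c)] The result lies in $K^\perp$. This follows from $\SP{\Cour{e_1,e_2},\varrho(a)}=\rho(e_1)\SP{e_2,\varrho(a)}-\SP{e_2,\Cour{e_1,\varrho(a)}}$. The first term vanishes because $\rho(e_1)$ is tangent to $\mu^{-1}(0)$. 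The second term is handled by the same skew-symmetrization argument, landing in $K$ paired with $K^\perp$.
\item[(d)] The class of the result is $G$-invariant. This follows from \eqref{eq: varphi calg aut}, since $u$ acts as a derivation of the bracket.
\end{enumerate}
Compatibility of the $G$-action with $K$ uses $\g$-equivariance of $\varrho$. Connectedness of $G$ lets us pass from $\g$-invariance to $G$-invariance.

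The last step is the axioms. The metric descends as already noted. Each Courant algebroid identity for $(E_{red},\SP{\cdot,\cdot}_{red},\rho_{red},\Cour{\cdot,\cdot}_{red})$ is obtained by choosing representatives in $\Gamma_{\mathrm{bas}}$, applying the corresponding identity in $E$, restricting to $\mu^{-1}(0)$, and projecting. The Leibniz rule, Jacobi identity, invariance of the pairing, and the symmetric-part formula all transfer this way. The only point needing care is identifying $p^*\rho_{red}^*d\underline f$ with $[\rho^*d(p^*f)]$ for an invariant extension of $p^*f$, which follows from the definition of $\rho_{red}$.
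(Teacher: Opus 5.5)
Your proposal is correct, but its route differs from the paper's main one. The paper obtains the theorem as a special case of the graded-symplectic reduction theorem \cite[Thm.~6.11]{GGGR}. It only remarks in passing that a direct argument modelled on \cite[Theorem~3.3]{Bursztyn} also works, and that direct argument is what you have written out.

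In the graded route, whose mechanism is recalled in \S\ref{sec:graded}, the argument runs as follows:
the ideal $\cI\subseteq C(\cM)$ generated by $J^\sharp(\overline{\g}[-2])$ is coisotropic because $J^\sharp$ preserves brackets;
$\theta\in N(\cI)$ because $J^\sharp$ intertwines $\delta$ with $\{\theta,\cdot\}$;
the derived brackets of $\theta_{red}$ on $N(\cI)/\cI$ then give a Courant--Dorfman structure on $\bigl((N(\cI)/\cI)_0,(N(\cI)/\cI)_1\bigr)$.
Regularity enters only through the identification \eqref{eq:algreduc} of these spaces with $C^\infty(\mu^{-1}(0))^{\g}$ and $\Gamma(K^\perp/K)^{\g}$.

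So your checks (a)--(d) and your axiom-by-axiom transfer are absorbed there into two algebraic facts plus $\{\theta_{red},\theta_{red}\}=0$. This packaging is what the paper actually needs, since Theorem~\ref{thm:main} is proved by comparing BRST cohomology with $N(\cI)/\cI$. Your argument, by contrast, is elementary and self-contained. It also shows which part of the diagram in Definition~\ref{def:hamact} does what. The identity $\varrho\circ\delta=\rho^*d\circ\mu^*$ gives tangency of $\rho(K^\perp)$ and puts the error terms from functions vanishing on $\mu^{-1}(0)$ into $K$. The identity $\mathrm{ad}\circ\varrho=\varphi\circ\delta$ controls changes of representative by $K$.

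A few small repairs are needed, none affecting validity.
\begin{enumerate}
\item $G$ acts only on $E|_{\mu^{-1}(0)}$, so a $G$-invariant tubular neighbourhood in $M$ is not available. It is also not needed, since any section of $K^\perp$ over the closed submanifold $\mu^{-1}(0)$ extends to $M$.
\item In (b) you should allow modifications $\sum_j f_j\varrho(a_j)$ with function coefficients. The extra Leibniz terms are $-(\rho(e_2)f)\varrho(a)$, which lies in $K$, and $\SP{\varrho(a),e_2}\rho^*df$, which vanishes on $\mu^{-1}(0)$.
\item In the second-slot case of (b), the function to expand in the $\mu_w$ is $\SP{e_1,\varrho(a)}$, not $\SP{e_2,\varrho(a)}$. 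You also need $(\delta a)\cdot e_1\in K$, which comes from invariance of $[e_1]$.
\end{enumerate}
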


This result is a special instance of \cite[Thm.~6.11]{GGGR} (a proof similar to the ``exact'' case in   \cite[Theorem~3.3]{Bursztyn} also yields the result).

\subsection{Homological Courant reduction: main result}\label{subsec:main}
Starting with a hamiltonian action on a Courant algebroid, our main result builds a dg Courant-Dorfman algebra $(\cR, \cE)$ whose associated Courant-Dorfman algebra in cohomology (Prop.~\ref{prop: CD algebra in cohomology}) agrees with the reduced Courant algebroid of Theorem~\ref{thm:geometricCred}. 

For a vector space $V$, we use the notation $V_M:= V\times M$ for the trivial vector bundle.

Let $(\g, \a, \h, \delta, \varpi)$ be a dgla
and $(E\to M, \SP{\cdot,\cdot}, \rho, \Cour{\cdot,\cdot})$ a Courant algebroid.

Let
$$
\cR:= C^\infty(M)\otimes \wedge \g^* \otimes \wedge \h, 
$$
be the graded commutative algebra
where elements in $\g^*$ have degree $1$, and  elements in  $\h$ have degree $-1$. 
We have a natural identification
$$
\mathrm{Der}(\cR) = \cR \otimes_{C^\infty(M)}(\Gamma(TM \oplus \g_M[1] \oplus  \h_M^*[-1])),
$$
in which $u \in \g$ acts as a derivation of $\wedge \g^*$ via the contraction $\iota_u$, and similarly $w\in \h^*$ acts on $\wedge \h$ via $\iota_w$.

Consider the graded vector bundle $E [-1] \oplus \a_M^*[-2] \oplus \a_M$ over $M$, recalling that elements in $E$ have degree $1$, elements in $\a_M^*$ have degree 2, and elements in $\a_M$ have degree $0$, and consider the graded $\cR$-module
$$
\cE:= \cR\otimes_{C^\infty(M)}(\Gamma(E [-1] \oplus \a_M^*[-2] \oplus \a_M)). 
$$

Let $\dCE: \wedge^\bullet \g^* \rightarrow  \wedge^{\bullet +1}\g^*$ be the Chevalley-Eilenberg (CE) differential of $\g$ with respect to the trivial $\g$-module $\bR$, and for the $\g$-modules $\h$, $ \a$ and $\a^*$, let $\dCE^\h$, $ \dCE^{\a}$ and $ \dCE^{\a^*}$ be the corresponding CE differentials.

\begin{theorem}\label{thm:main}
    For a hamiltonian action $(\varphi,\varrho,\mu)$  of a dgla  $(\g, \a, \h, \delta, \varpi)$ on $(E\to M, \SP{\cdot,\cdot}, \rho, \Cour{\cdot,\cdot})$, the  pair $(\cR, \cE)$ carries a nondegenerate dg Courant-Dorfman algebra structure of degree $-1$ defined as follows:
    \begin{itemize}
\item the differential $\dR: \cR^\bullet \to \cR^{\bullet +1}$ is given on generators $f \in C^\infty(M)$, $ u^* \in \g^*$, and $w \in \h$ by
$$
\dR(f)\in C^\infty(M)\otimes \g^*, \qquad \dR(f)(u)=\Lie_{\underline{\varphi}(u)}f,     
$$
and
$$
\dR(u^*) =  \dCE(u^*) \quad \text{and} \quad \dR(w) = \mu^*(w) + \dCE ^\h(w),
$$
for $ u\in \g$, and $\underline{\varphi}$ as in \eqref{eq:Mact}; 
\item the differential $\dE: \cE^\bullet\to \cE^{\bullet+1}$ is determined by
$$
\dE(e)\in C^\infty(M)\otimes \a^* + \g^*\otimes \Gamma(E), \quad   \dE(e)(v) = - \SP{\varrho(v),e},  \;\; \dE(e)(u) =\varphi(u)e,
$$
and
$$
\dE(v^*) = d_{\scriptscriptstyle{CE}}^{\a^*}(v^*),   \quad \dE(v) = \varrho(v) + \iota_v\varpi + d^\a_{\scriptscriptstyle{CE}}(v),
$$
for $e \in \Gamma(E),$ $ u \in \g,$ $ v\in\a $ and $v^* \in \a^*$; 
\item the anchor  $\boldsymbol{\rho}_{\scriptscriptstyle{\cE}}:  \cE \rightarrow \mathrm{Der}(\cR)$ is determined by 
 $$
 \rhocE (e) = \rho(e), \quad \rhocE(v) = \iota_{\delta(v)} \quad \text{and} \quad \rhocE(v^*) =  \iota_{\delta^*(v^*)},
 $$ 
for $e \in \Gamma(E),$ $v\in \a$ and $v^*\in \a^*$; 

\item the bilinear map $\SP{\cdot, \cdot}_{\scrsE}: \cE \otimes_{\cR} \cE \rightarrow \cR$
is determined by
$$ 
\SP{e_1, e_2}_{\scrsE} = \SP{e_1, e_2}, \qquad \SP{v^\prime, v^*}_{\scrsE} = v^*(v^\prime) = - \SP{v^*,v^\prime}_{\scrsE},
$$
and
$$
 \SP{e, v^\prime}_{\scrsE} = 0 = \SP{e, v^*}_{\scrsE}, \qquad  \SP{v_1, v_2}_{\scrsE}=0, \;\;  \SP{v_1^*,v_2^*}_{\scrsE}=0,
$$
for all $e, e_1, e_2 \in \Gamma(E),$ $v_1, v_2, v^\prime\in \a$ and $v_1^*, v_2^*, v^* \in \a^*;$

\item the bracket $\Cour{\cdot,\cdot}_{\scriptscriptstyle{\cE}}: \cE \otimes_{\bR} \cE \rightarrow \cE$ is determined by 
$$
\Cour{e_1, e_2}_{\scrsE} \coloneqq  \Cour{e_1, e_2} \quad \text{and} \quad \Cour{e, v}_{\scrsE} =  \Cour{e, v^*}_{\scrsE} = \Cour{v, v^*}_{\scrsE} = \Cour{v_1,v_2}_{\scrsE} = \Cour{v_1^*, v_2^*}_{\scrsE}=0
$$
for $e, e_1, e_2 \in \Gamma(E),$ $v, v_1,v_2 \in \a$ and $v^*,v_1^*,v_2^* \in \a^*.$ 
\end{itemize}
     
Moreover, under the regularity assumptions of Theorem \ref{thm:geometricCred}, the Courant-Dorfman algebra $(H^0_\cR, H^1_\cE)$ is identified with the Courant-Dorfman algebra $(C^\infty(M_{red}), \Gamma(E_{red}))$ of the reduced Courant algebroid. 
\end{theorem}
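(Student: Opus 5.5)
The proof has two parts: the construction of the dg Courant--Dorfman structure, and the identification of cohomology.

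\textbf{The dg Courant--Dorfman structure.} I would not check the axioms of Def.~\ref{def:dgDC} directly. Instead I would follow the route outlined in the introduction. Via Roytenberg's correspondence, $E$ becomes a degree-$2$ symplectic $\bN$-manifold $\cM$ with a Courant hamiltonian $\Theta_E$ satisfying $\{\Theta_E,\Theta_E\}=0$. The hamiltonian action $(\varphi,\varrho,\mu)$ becomes a hamiltonian action of the dgla $\ov\g$ on $\cM$, following \cite[\S~8]{GGGR}. One then forms the BFV algebra
\[
\cA_{\scriptscriptstyle{BFV}}=C(\cM)\otimes\wedge(\ov\g[1]\oplus\ov\g^*[-2])
\]
with its canonical graded Poisson bracket. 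This algebra receives a second grading, the weight, under which $C^\infty(M)$ and the ghosts $\g^*,\h$ have weight $0$, while $E$, $\a$ and $\a^*$ have weight $1$; for this grading it is bigraded Poisson of bidegree $(-2,-2)$. The BRST charge $Q\in\cA^{2,3}$ is built from the moment data $\mu$, $\varrho$, $\varphi$ together with the CE terms. The element $\Theta\in\cA^{3,3}$ is $\Theta_E$ plus the term encoding the dgla differential $\delta$ and $\varpi$.

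The key identity is the master equation $\{Q+\Theta,Q+\Theta\}=0$. Splitting by weight, this is the three equations
\[
\{Q,Q\}=0,\qquad \{Q,\Theta\}=0,\qquad \{\Theta,\Theta\}=0 .
\]
\begin{itemize}
\item $\{Q,Q\}=0$ encodes that $\varphi$ is a Lie morphism together with the equivariance of $\mu$ and $\varrho$ and the isotropy condition \eqref{eq:Kisot}.
\item $\{Q,\Theta\}=0$ encodes the commutativity of diagram \eqref{diagram: hamiltonian act Courant}, the automorphism property \eqref{eq: varphi calg aut}, and the compatibility \eqref{eq:condsbeta}.
\item $\{\Theta,\Theta\}=0$ follows from $\{\Theta_E,\Theta_E\}=0$ and $\delta^2=0$.
\end{itemize}
Once the master equation holds, Thm.~\ref{thm: derived dg CD} produces a dg Courant--Dorfman algebra of degree $-1$ on $(\cA^{0,\bullet},\cA^{1,\bullet})$. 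The remaining work is bookkeeping. One identifies $\cA^{0,\bullet}=\cR$ and $\cA^{1,\bullet}=\cE$, and computes the derived formulas $\{Q,\cdot\}$, $\{e_1,e_2\}$, $\{\Theta,r\}$ and $-(-1)^{|e_1|}\{\{\Theta,e_1\},e_2\}$ on generators. This should reproduce the listed $\dR$, $\dE$, pairing, anchor and bracket. Nondegeneracy holds because the pairing is fibrewise the direct sum of the metric on $E$ and the duality pairing between $\a$ and $\a^*$. Hence $\flat$ is an isomorphism onto $\mathrm{Hom}_\cR(\cE,\cR)$, since $\cE$ is free of finite rank over $\cR$.

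\textbf{Identification of cohomology.} Under regularity, I would argue by a spectral sequence, as in the classical BFV picture for Poisson reduction \cite{Kostant,Stasheff}. Filter $\cR$ by the $\wedge\h$ degree. The leading part of $\dR$ is the Koszul differential $w\mapsto\mu^*w$. By (r1), $\mu^*$ is a regular sequence locally, so the Koszul complex resolves $C^\infty(\mu^{-1}(0))$. The next page is the CE complex of $\g$ with coefficients in $C^\infty(\mu^{-1}(0))$. Its $H^0$ is the invariants $C^\infty(\mu^{-1}(0))^G=C^\infty(M_{red})$, using that $G$ is connected and acts freely and properly.

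For $\cE$ in degree $1$, the relevant part of the complex is
\[
\a\otimes\cR \longrightarrow \Gamma(E)\otimes\cR \longrightarrow \a^*\otimes\cR .
\]
The first map is $\varrho$, which is injective on $\mu^{-1}(0)$ by (r2). The second is $e\mapsto-\SP{\varrho(\cdot),e}$, whose kernel is $K^\perp$. After Koszul-resolving along $\mu$, the degree-$1$ cohomology restricted to $\mu^{-1}(0)$ should therefore be $\Gamma(K^\perp/K)$, and taking $\g$-invariants gives $\Gamma(K^\perp/K)^G\cong\Gamma(E_{red})$.

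Finally, one checks that the induced pairing, anchor and bracket agree with those of Theorem~\ref{thm:geometricCred}. A class is represented by a section $e$ with $e|_{\mu^{-1}(0)}\in K^\perp$ and invariant modulo correction terms. Its bracket is $\Cour{e_1,e_2}$ restricted to $\mu^{-1}(0)$ and taken modulo $K$, which is exactly \eqref{eq: red courant str}.

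\textbf{Main obstacle.} I expect the hardest step to be the cohomology computation for $\cE$. It mixes the Koszul part, the $\varrho$ and $\varrho^*$ parts, and the CE part, so both the vanishing of the correct $E_1$ terms and convergence must be controlled. My first attempt would be a double complex with the filtration by $\h$-degree plus $\a$-degree, together with a local splitting near $\mu^{-1}(0)$ so that the Koszul differential has a contracting homotopy. Higher CE cohomology is irrelevant in degree $0$, since only invariants appear there.
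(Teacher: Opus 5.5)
Your first part (Roytenberg correspondence, weight-graded BFV algebra, splitting of the master equation, Theorem~\ref{thm: derived dg CD}, bookkeeping on generators) is the paper's argument. For the cohomology you take a genuinely different route.

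The paper never computes $H^1_\cE$ directly. It notes $\oplus_k H^{k,k}_Q(\cA)=H^{0,\bullet}_{\scriptscriptstyle{BRST}}$ and invokes Theorem~\ref{thm: H0 BFV2}: the Poisson isomorphism $\Phi\colon H^{0,\bullet}_{\scriptscriptstyle{BRST}}\to N(\cI)/\cI$, proved as in \cite{Carvalho}, satisfies $\Phi([\Theta])=\overline{\theta}$. So the two Courant--Dorfman structures agree automatically, by functoriality of the derived-bracket construction. The paper then quotes \eqref{eq:algreduc} from \cite{GGGR}.

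You instead compute only $H^0_\cR$ and $H^1_\cE$ by spectral sequences and compare the structure maps by hand. This is more self-contained: it needs neither $H^{0,\bullet}_{\scriptscriptstyle{BRST}}$ in all function degrees nor the description of $N(\cI)/\cI$ from \cite{GGGR}. The paper's route gives the whole reduced hamiltonian Poisson algebra at once and makes the comparison of brackets free.

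Your hand comparison is easy, though. The edge map of your spectral sequence is $\Phi$ in disguise: take the ghost-free component, restrict to $\mu^{-1}(0)$, and project modulo $K$. Elements of $\cR^0$, $\cE^1$ and $\Theta$ have total ghost number $0$, and the BFV bracket contracts at most one ghost--antighost pair. Hence, for cocycles $\epsilon_i\in\cE^1$ with ghost-free parts $e_i\in\Gamma(E)$, the ghost-free part of $[\epsilon_1,\epsilon_2]=\{\{\Theta,\epsilon_1\},\epsilon_2\}$ is $\{\{\theta,e_1\},e_2\}=\Cour{e_1,e_2}$. The same holds for the pairing and the anchor.

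Two corrections are needed.

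\emph{Filtrations.} The filtrations you name would not do what you want. Filtering by $\wedge\h$-degree makes the CE part, not the Koszul part, the leading differential. Filtering $\cE$ by $\h$-degree plus $\a$-degree makes the $\varpi$-, $\SP{\varrho(\cdot),\cdot}$- and CE-terms leading. Instead:
\begin{enumerate}[label=(\alph*)]
\item Filter both $\cR$ and $\cE$ by $\wedge\g^*$-degree.
\item Filter the associated graded of $\cE$ further by the degree $0,1,2$ of the $\a$, $E$, $\a^*$ factor; the leading term is then Koszul.
\item By (r1), the next page is $\Gamma(\a_Z)\to\Gamma(E|_Z)\to\Gamma(\a^*_Z)$ with $Z=\mu^{-1}(0)$.
\item By (r2) and nondegeneracy of the metric, the first map is injective and the second surjective. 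So this page has cohomology $\Gamma(K^\perp/K)$ in degree $1$ only.
\item The CE step then gives $\Gamma(K^\perp/K)^\g$.
\end{enumerate}
All filtrations are finite, so convergence is automatic.

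\emph{Placement of $\varpi$.} $\varpi$ is not part of $\Theta$. A term $v^*v^*w$ has bidegree $(2,3)$, so it lives in the CE part $Q^{2,1}$ of the BRST charge; this is why $\iota_v\varpi$ appears in $\dE=\{Q,\cdot\}$. The term $\theta^{1,1}$ encodes only $\delta$.
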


\begin{remark}[Relation with the BRST algebra of quotients]
From the perspective of \cite{Carvalho}, the commutative dg algebra
$(\cR, \dR)$ is the BRST algebra providing the homological model for the quotient of $\mu^{-1}(0) \subset M$ by the $\g$-action $\underline{\varphi}$, i.e., it is the dg algebra whose degree zero cohomology recovers the algebra of $\g$-invariant functions on $\mu^{-1}(0).$ 
\hfill $\diamond$
\end{remark}

This theorem will be derived and proved using the graded symplectic approach to Courant reduction \cite{GGGR} and  the construction of a homological (BFV) model for the reduction of symplectic $\bN$-manifolds of degree $2$ (cf. \cite{AleZabzine, Carvalho}).

\subsection{Example: the exact case}\label{subsec:exact}

We will illustrate the  dg Courant-Dorfman from Theorem~\ref{thm:main} when both the dgla and Courant algebroid are ``exact''.

Let $\g$ be a Lie algebra, $\h$ a $\g$-module, and $\Upsilon: \g\times \g \to \h$ a bilinear map satisfying
\begin{equation}\label{eq:cocycle}
 u_1 \Upsilon(u_2, u_3) - u_2\Upsilon(u_1, u_3) - \Upsilon([u_1, u_2], u_3) - \Upsilon(u_2, [u_1, u_3]) + \Upsilon(u_1, [u_2, u_3]) =0,
\end{equation}
for all $u_1, u_2, u_3 \in \g$. Then the triple 
$$
(\g,\h,\Upsilon)
$$ 
defines a dgla $(\g,\a,\h,\delta,\varpi)$, where
\begin{itemize}
\item $\a=\h\oplus\g$, with $\g$-module structure
$$
u_1 \cdot (w,u) := (u_1 w + \Upsilon(u_1,u), [u_1,u]);
$$
\item $\varpi: \a\otimes \a\to \h$, 
$$
\varpi(w_1, u_1 ), (w_2, u_2 )) = u_2w_1 + u_1w_2 + \Upsilon(u_1, u_2) +  \Upsilon(u_2, u_1);
$$
\item the differential $\delta$ is given by the inclusion $\h\hookrightarrow \a$ and projection $\a\twoheadrightarrow \g$.
\end{itemize}

\begin{remark}
Condition \eqref{eq:cocycle} on $\Upsilon: \otimes^2 \g\to \h$ can be seen as a cocycle condition in {\em Leibniz cohomology} by regarding $\h$ as a Leibniz bimodule over $\g$  with respect to its left $\g$-action and trivial right $\g$-module structure ($w\cdot \g = 0$, for all $w\in \h$), see \cite{Mandal}. \hfill $\diamond$
\end{remark}

A dgla $(\g,\a,\h,\delta,\varpi)$ is called {\em exact} when the complex \eqref{eq:deltaseg} is an exact sequence (of $\g$-modules),
$$
0 \longrightarrow \h \longrightarrow \a \longrightarrow \g \longrightarrow 0.
$$
In this case, a choice of (vector-space) splitting $s: \g\to \a$ induces an isomorphism of the given dgla with the one defined by $(\g,\h,\Upsilon)$ as above, for
$$
\Upsilon(u_1,u_2) = u_1 s(u_2)-s([u_1,u_2]).
$$

When $\Upsilon=0$, the corresponding dgla corresponds to the {\em hemisemidirect product} of $\g$ and $\h$, see \cite[$\S$ 8.5]{GGGR}.

We now introduce a class of hamiltonian actions of exact dglas on exact Courant algebroids.

Let $(\g,\h,\Upsilon)$ be a triple as above, and let $\bT_\eta M$ denote an  exact Courant algebroid defined by a closed 3-form $\eta\in \Omega^3(M)$ (see Example~\ref{ex:exactCA}).

Consider a $\g$-action $\psi: \g \to \mathfrak{X}(M)$, a $\g$-equivariant map $\mu: M\to \h^*$, and let $\zeta: \g \to \Omega^1(M)$ satisfy
\begin{itemize}
\item[(a)] $i_{\psi(u)}\zeta(u) = \mu^*(\Upsilon(u,u))$,
\item[(b)] $d(\mu^*(\Upsilon(u_1, u_2))) + \zeta([u_1, u_2])
= \Lie_{\psi(u_1)} \zeta(u_2) - \iota_{\psi(u_2)}d\zeta(u_1) + \iota_{\psi(u_2)}\iota_{\psi(u_1)}\eta$,
\end{itemize}
for $u$, $u_1$, $u_2\in \g$.

\begin{proposition}\label{prop:hamact}
The data $(\psi, \mu, \zeta)$ determines a hamiltonian action of the exact dgla corresponding to $(\g,\h,\Upsilon)$ on the exact Courant algebroid $E=\bT_\eta M$ by
\begin{itemize}
\item $\varrho: \h\oplus \g\to \Gamma(E)$, 
$$
\varrho(w,u)=(\psi(u), d\mu_w + \zeta(u)),
$$
\item $\varphi: \g\to \Gamma(\bA_E)$, $\varphi(u)=\Cour{\varrho(u),\cdot}_\eta$.
\end{itemize}

\end{proposition}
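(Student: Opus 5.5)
Checking Definition~\ref{def:hamact} piece by piece is the direct route. Here $\varphi(u)=\Cour{\varrho(u),\cdot}_\eta$ with $\varrho(u):=\varrho(0,u)=(\psi(u),\zeta(u))$. So I must verify four things: that $\varphi$ lands in Courant automorphisms with symbol $\psi$ and is a Lie algebra morphism; that $\varrho$ and $\mu$ are $\g$-equivariant; the isotropy condition \eqref{eq:Kisot}; and commutativity of diagram \eqref{diagram: hamiltonian act Courant}.

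First I treat $\varphi$ and the diagram. For any section $e$ of a Courant algebroid, $\mathrm{ad}(e)=\Cour{e,\cdot}$ is an infinitesimal automorphism with symbol $\rho(e)$, by the Leibniz identity of the Dorfman bracket. Hence $\varphi(u)$ is an automorphism with symbol $\psi(u)$, and the right square of \eqref{diagram: hamiltonian act Courant} commutes because $\delta(w,u)=u$. It then remains only that $\mathrm{ad}(\varrho(w,u))=\mathrm{ad}(\varrho(u))$, which holds since $\mathrm{ad}(0,d\mu_w)=0$: brackets with exact elements $\partial f$ vanish. The left square requires $\varrho(\delta w)=\varrho(w,0)=(0,d\mu_w)=\rho^*d\mu_w$. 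Under the identification $E\cong E^*$ given by the pairing, $\rho^*d f=(0,df)$ for $\bT_\eta M$, so this holds. Next, $\varphi$ is a Lie morphism. From the Jacobi identity, $[\mathrm{ad}(e_1),\mathrm{ad}(e_2)]=\mathrm{ad}(\Cour{e_1,e_2})$, so it suffices that $\Cour{\varrho(u_1),\varrho(u_2)}_\eta-\varrho([u_1,u_2])$ lies in $\partial C^\infty(M)$. Computing the bracket gives $([\psi(u_1),\psi(u_2)],\Lie_{\psi(u_1)}\zeta(u_2)-i_{\psi(u_2)}d\zeta(u_1)+i_{\psi(u_2)}i_{\psi(u_1)}\eta)$. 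By condition (b) and the fact that $\psi$ is a Lie algebra action, this equals $\varrho([u_1,u_2])+(0,d\mu^*\Upsilon(u_1,u_2))$. Hence
\begin{equation*}
u_1\cdot\varrho(u_2)=\varrho\bigl(\Upsilon(u_1,u_2),[u_1,u_2]\bigr)=\varrho(u_1\cdot(0,u_2)).
\end{equation*}
This is exactly equivariance of $\varrho$ on the $\g$-summand, and it also yields the morphism property.

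For equivariance on the $\h$-summand, note that $u\cdot(0,d\mu_w)=\Cour{\varrho(u),(0,d\mu_w)}_\eta=(0,\Lie_{\psi(u)}d\mu_w)=(0,d(\Lie_{\psi(u)}\mu_w))$. Equivariance of $\mu$ gives $\Lie_{\psi(u)}\mu_w=\mu_{uw}$, with the sign convention fixed so that $\mu^*$ is a $\g$-module map. Thus $u\cdot\varrho(w,0)=\varrho(uw,0)$, consistent with $u\cdot(w,0)=(uw,0)$. Combined with the previous paragraph, this gives equivariance of $\varrho$ on all of $\a$.

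Isotropy comes last. Because both sides are symmetric bilinear, polarization reduces \eqref{eq:Kisot} to the diagonal: $\SP{\varrho(w,u),\varrho(w,u)}=\mu^*\varpi((w,u),(w,u))$. The left side equals $2(i_{\psi(u)}d\mu_w+i_{\psi(u)}\zeta(u))=2(\mu_{uw}+\mu^*\Upsilon(u,u))$, using equivariance of $\mu$ and condition (a). The right side is $\mu^*(2uw+2\Upsilon(u,u))$, so the two agree.

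I expect the main obstacle to be the sign and convention bookkeeping. That means making sure the $\g$-module structure on $\h^*$ and the equivariance of $\mu$ produce $\Lie_{\psi(u)}\mu_w=+\mu_{uw}$, and that the bracket convention in Example~\ref{ex:exactCA} matches the form of (b). Everything else is direct substitution.
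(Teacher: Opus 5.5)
Your proposal is correct. The paper gives no proof of this proposition; it only remarks that it recovers \cite[Prop.~2.3]{Bursztyn1} when $\Upsilon=0$. Your clause-by-clause verification of Def.~\ref{def:hamact} supplies the missing argument, with the identity $\Cour{\varrho(0,u_1),\varrho(0,u_2)}_\eta=\varrho(u_1\cdot(0,u_2))$ from (b) giving both equivariance and the morphism property, and polarization plus (a) giving isotropy. The sign convention you fix, $\Lie_{\psi(u)}\mu_w=\mu_{uw}$, is the right one: it is what bracket-preservation of the graded moment map encodes ($\{\varphi(u),\mu_w\}=\mu_{[u,w]}$), and you use it consistently in both the $\h$-equivariance step and the isotropy step.
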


This recovers \cite[Prop.~2.3]{Bursztyn1} when $\Upsilon =0$.

Note that when the dgla is a hemisemidirect product ($\Upsilon =0$) and the exact Courant algebroid is standard ($\eta=0$), we have a {\em canonical} hamiltonian action defined by $\zeta=0$. We refer to this case as the {\em standard hamiltonian action}.

Let us fix a hamiltonian action of the dgla determined by $(\g, \h, \Upsilon)$ on $\bT_\eta M$ defined by $(\psi, \mu, \zeta)$. 
Recall that, by Theorem~\ref{thm:main}, it gives rise to a dg Courant-Dorfman algebra $(\cR,\cE)$, where
$$
\cR= C^\infty(M)\otimes \wedge \g^* \otimes \wedge \h,
$$
with $\g^*$ in degree $1$ and $\h$ in degree $-1$, and
$$
\cE  =  \cR \otimes_{C^\infty(M)}(\Gamma(E[-1] \oplus \a_M^*[-2] \oplus \a_M)).
$$

Using the decompositions $\a=\h\oplus \g$ and $E=TM\oplus T^*M$, we have an isomorphism of $\cR$-modules
\begin{equation}\label{eq:kappa}
\kappa: \cE\to \bT_\cR[-1]=\Der(\cR)[-1]\oplus \Omega^1_\cR[-1]
\end{equation}
defined on generators by
$$
\kappa((Y,\beta), (\lambda,\xi), (w,u)) = ((Y + \iota_u +\iota_\lambda),(\beta - \widehat{w} + \widehat{\xi}))
$$
where $(Y,\beta) \in \Gamma(E)=\mathfrak{X}(M)\oplus \Omega^1(M)$, $(\lambda,\xi)\in \a^*=\h^*\oplus \g^*$, $(w,u)\in \a=\h\oplus \g$, $\iota_u$, $\iota_\lambda$ are the usual contraction derivations, and $\widehat{w}$, $\widehat{\xi}$ are defined by
$$
\bm{i}_{\iota_\lambda}\widehat{w} = \widehat{w}(\iota_\lambda)=\lambda(w), \qquad \bm{i}_{\iota_u}\widehat{\xi} = \widehat{\xi}(\iota_u)=\xi(u),
$$
and zero on the other types of generators. 


\begin{theorem}\label{thm:exact}
Let $(\cR,\cE)$ be the dg Courant-Dorfman algebra associated with the hamiltonian action on $\bT_\eta M$ determined by $(\psi, \mu, \zeta)$. Then the map $\kappa$ identifies $(\cR,\cE)$ with the $(F,H)$-twisted dg Courant-Dorfman algebra  on $\bT_\cR[-1]$, where $H=\eta$ and $F$ is determined by
\begin{equation}\label{eq:F0-components}
\left\{
\begin{aligned}
&F(Y,Z)
\in C^\infty(M)\otimes\g^*, \quad
F(Y,Z)(u)=
\bigl(
\iota_{\psi(u)}\eta-d\zeta(u)
\bigr)(Y,Z),
\\
&F(Y,\iota_u)
=
-\zeta(u)(Y),
\\
&F(\iota_{u_1},\iota_{u_2})
=
\Upsilon(u_1,u_2)+\Upsilon(u_2,u_1)
\in\h\subseteq\cR^{-1},
\\
&F(\iota_u,\iota_\lambda)
\in\g^*\subseteq\cR^1,
\quad F(\iota_u,\iota_\lambda)(x)
=
-\lambda\bigl(\Upsilon(x,u)\bigr),
\\
&F(Y,\iota_\lambda)
=0,\quad 
F(\iota_{\lambda_1},\iota_{\lambda_2})
=0,
\end{aligned}
\right.
\end{equation}
for $Y,Z\in\mathfrak X(M)$, $u,u_1,u_2,x\in\g$, and
$\lambda,\lambda_1,\lambda_2\in\h^*$.
\end{theorem}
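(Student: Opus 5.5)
Since both sides are nondegenerate dg Courant--Dorfman algebras of degree $-1$ over the same dg algebra $(\cR,\dR)$, the plan is to check that $\kappa$ intertwines the structures generator by generator. First, $\kappa$ is an isomorphism of graded $\cR$-modules: $\Der(\cR)$ is freely generated by $\mathfrak X(M)$, the $\iota_u$ and the $\iota_\lambda$. Dually, $\Omega^1_\cR$ is freely generated by $\Omega^1(M)$ and the dual forms $\widehat w,\widehat\xi$. In $\bT_\cR[-1]$ these generators sit in the same degrees as the generators $E[-1]$, $\a_M$, $\a_M^*[-2]$ of $\cE$.

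I would then check the pairing, anchor and bracket, which do not involve $\dE$.
\begin{itemize}
\item \emph{Pairing.} Compare $\SP{\cdot,\cdot}_{\scrsE}$ with \eqref{eq:canpairm}. For instance, $\SP{(w,u),(\lambda,\xi)}_{\scrsE}=\lambda(w)+\xi(u)$, and this equals $\bm i_{\iota_u+\iota_\lambda}(\widehat\xi-\widehat w)$ up to the sign built into \eqref{eq:canpairm}. The minus sign in $-\widehat w$ is chosen to make this work.
\item \emph{Anchor.} Here $\rhocE(w,u)=\iota_{\delta(w,u)}=\iota_u$ and $\rhocE(\lambda,\xi)=\iota_{\delta^*(\lambda,\xi)}=\iota_\lambda$, since $\delta$ is the inclusion and the projection. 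This matches the first component of $\kappa$, as it must for the standard anchor.
\item \emph{Bracket.} Nondegeneracy, condition (1), and the Leibniz rule \eqref{eq:CDleib} reduce the check to brackets of generators. Among generators, the brackets in $\cE$ vanish except on $\Gamma(E)$, where they give the $\eta$-twisted Dorfman bracket. On $\bT_\cR[-1]$, \eqref{eq:lm} gives zero on the constant generators $\iota_u,\iota_\lambda,\widehat w,\widehat\xi$, because their Lie brackets, $\bm d\widehat w$ and $\bm d\widehat\xi$ all vanish. The $H=\eta$ term only involves vector fields on $M$, and on $\Gamma(E)$ the formula reduces to Example~\ref{ex:exactCA}.
\end{itemize}

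The substantive step is the differential, which I expect to be the main obstacle. Since $\kappa$ is $\cR$-linear and both differentials satisfy \eqref{eq:leib2} over the same $\dR$, it suffices to compare $\kappa\circ\dE$ with $d_{\bT,F}\circ\kappa$ on generators. By \eqref{eq:dFt}, $d_{\bT,F}=d_{\bT}+(0,\pm\bm i_X\bm F)$. So I would first compute the untwisted $d_\Der$ and $d_\Omega$ on generators, using $d_\Der=[\dR,\cdot]$ and the explicit $\dR$: the $\dR$ of Theorem~\ref{thm:main} specialized to $\underline\varphi=\psi$, with $\mu^*$ and $\dCE$. Then I would compare with $\dE$, where $\varrho(w,u)=(\psi(u),d\mu_w+\zeta(u))$, $\varphi(u)=\Cour{\varrho(u),\cdot}_\eta$, and $\varpi$ is as in \S\ref{subsec:exact}. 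The discrepancy $\kappa\dE-d_{\bT}\kappa$ on each generator type should be $\cR$-linear, and it defines $\bm i_X\bm F$.

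The components of $F$ are then read off as follows:
\begin{itemize}
\item $(Y,Z)$: from $\varphi(u)(Y,\beta)$ via the twisted Dorfman bracket, yielding $\iota_{\psi(u)}\eta-d\zeta(u)$;
\item $(Y,\iota_u)$: from the $\zeta(u)$ part of $\varrho(u)$, yielding $-\zeta(u)(Y)$;
\item $(\iota_{u_1},\iota_{u_2})$: from $\iota_v\varpi$ together with the $\Upsilon$ part of $\dCE^\a$;
\item $(\iota_u,\iota_\lambda)$: from $\dCE^{\a^*}$ on $\h^*$, twisted by $\Upsilon$.
\end{itemize}
The $d\mu_w$ parts should be absorbed by the $\mu^*(w)$ term in $\dR(w)$, which is why $F$ has no $\mu$-dependence. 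Sign bookkeeping with the shifted Cartan calculus is where care is needed.

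Finally, by \eqref{eq:FH}, the twisted structure on $\bT_\cR[-1]$ is a dg Courant--Dorfman algebra, as in Example~\ref{ex:shifted}, provided $d_\Omega F=0$ and $d_\Omega\eta=dF$. One can either verify these identities from (a), (b) and \eqref{eq:cocycle}, or, more cheaply, deduce them. Theorem~\ref{thm:main} already gives that $(\cR,\cE)$ is a dg Courant--Dorfman algebra, and $\kappa$ transports its structure onto the explicitly twisted one. Then $d_{\bT,F}^2=0$ and compatibility with the bracket force \eqref{eq:FH}.
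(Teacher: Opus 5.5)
Your proposal is correct and follows essentially the paper's proof. You check that $\kappa$ preserves the pairing and anchor, compare brackets on generators to get $H=\eta$, and read off $F$ from $\kappa\circ\dE\circ\kappa^{-1}-d_{\bT}$. The only difference is that the paper invokes the normal form Prop.~\ref{prop:exactdgCD}, which already guarantees that this discrepancy has the form $(-1)^{|X|}\bm{i}_X\bm{F}$ and that \eqref{eq:FH} holds (i.e.\ your ``transport'' argument). It then extracts the components of $F$ through the pairing formula \eqref{eq:FAB} instead of computing $d_{\Der}$ and $d_\Omega$ on all generators.

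Two small bookkeeping points. First, reducing the bracket check to generators in the first slot uses axiom (2) of Def.~\ref{def:dgDC} (together with (4)) rather than (1). Second, the component $F(\iota_{u_1},\iota_{u_2})$ comes from $\iota_v\varpi$ alone; the $\Upsilon$-part of $\dCE^{\a}$ is what produces $F(\iota_u,\iota_\lambda)$ when one computes from the generator $(0,u)$.
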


\begin{proof}
We first check the pairing. On the $E[-1]$-summand, $\kappa$ is the
identity, so it preserves the pairing of $TM\oplus T^*M$.
Let $v=(w,u)\in\h\oplus\g$, $v^*=(\lambda,\xi)\in\h^*\oplus\g^*$, so that
$\kappa(v)=(\iota_u,-\widehat w)$, $\kappa(v^*)=(\iota_\lambda,\widehat\xi)$.
Since $|v|=0$ and $|v^*|=2$, the canonical degree $-1$ pairing on
$\bT_\cR[-1]$ gives
$$
\SP{\kappa(v),\kappa(v^*)}
=
\bm{i}_{\iota_u}(\widehat\xi)
-
\bm{i}_{\iota_\lambda}(-\widehat{w}) =
\xi(u)+\lambda(w)
=
v^*(v) = \SP{v,v^*}_{\scriptscriptstyle{\cE}}.
$$
All other pairings between the three types of generators vanish
on both sides, so
\[
\SP{\kappa(e_1),\kappa(e_2)} = \SP{e_1,e_2}_{\scriptscriptstyle{\cE}}.
\]
One can also directly check that $\kappa$ preserves anchors (and hence the corresponding derivations, since the pairing on $\cE$ is nondegenerate). By Prop.~\ref{prop:exactdgCD}, the Courant-Dorfman algebra structure on $\bT_\cR[-1]$ induced by the isomorphism  $\kappa$ in \eqref{eq:kappa} is twisted by forms $(F,H)$, as in Example \ref{ex:twists}.

The form $F\in \Omega_\cR^{2,1}$ satisfies
\[
D:=\kappa\circ \dE\circ\kappa^{-1},
\qquad
D(A,\vartheta)
=
\bigl(d_{\Der}A,d_\Omega\vartheta + (-1)^{\m{A}}\bm{i}_A\bm{F}\bigr),
\]
for $(A,\vartheta) \in \Der(\cR)[-1]\oplus \Omega^1_\cR[-1]$. With the sign convention for the shifted contraction, it follows that
\begin{equation}\label{eq:FAB}
 F(A_0,B_0) = (-1)^{B_0} \SP{\dE \kappa^{-1}(A,0), \kappa^{-1}(B,0)}_{\scriptscriptstyle{\cE}},
\end{equation}
for $A, B\in \Der(\cR)[-1]$, and $A_0$, $B_0 \in \Der(\cR)$ the corresponding unshifted elements. Applying the previous formula to the three types of derivation
generators we obtain the defining components of $F$ in
\eqref{eq:F0-components}.

Finally, we transport the Courant-Dorfman bracket through $\kappa$.
On two generators belonging to $E[-1]$, $\kappa$ is the identity, and
the original bracket is the $\eta$-twisted exact Courant bracket:
\[
\Cour{(Y,\beta),(Z,\gamma)}_\eta
=
\left(
[Y,Z],
\Lie_Y\gamma-\iota_Zd\beta+\iota_Z\iota_Y\eta
\right).
\]
Hence the transported bracket on $\bT_\cR[-1]$ differs from the standard Dorfman
bracket by the $3$-form $H=\eta$.

If one of the entries is a generator in $\mathfrak a$ or
$\mathfrak a^*$, both the bracket on $\cE$ and the
corresponding standard twisted bracket vanish. Consequently,
\[
\kappa\bigl(\Cour{e_1,e_2}_\cE\bigr)
=
\Cour{\kappa(e_1),\kappa(e_2)}_{\eta},
\]
so $H=\eta$.
\end{proof}

\begin{corollary}\label{cor:stdaction}
For a standard hamiltonian action on $\bT M$, the corresponding dg Courant-Dorfman algebra on $\bT_\cR[-1]$ is the standard one ($F=0$, $H=0$).
\end{corollary}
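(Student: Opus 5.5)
The corollary follows from Theorem~\ref{thm:exact} by specializing its data. A standard hamiltonian action means three things: the dgla is the hemisemidirect product, so $\Upsilon=0$; the Courant algebroid is $\bT M$ with $\eta=0$; and $\zeta=0$. In this situation conditions (a) and (b) preceding Prop.~\ref{prop:hamact} hold trivially, so Prop.~\ref{prop:hamact} gives a hamiltonian action with $\varrho(w,u)=(\psi(u),d\mu_w)$. Theorem~\ref{thm:exact} then says that $\kappa$ identifies $(\cR,\cE)$ with the $(F,H)$-twisted dg Courant-Dorfman algebra on $\bT_\cR[-1]$, with $H=\eta=0$ and $F$ given by \eqref{eq:F0-components}. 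It therefore suffices to check that every component of $F$ vanishes.

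We go through \eqref{eq:F0-components} line by line.
\begin{itemize}
\item The $(Y,Z)$ component is $\bigl(\iota_{\psi(u)}\eta-d\zeta(u)\bigr)(Y,Z)$, which vanishes because $\eta=0$ and $\zeta=0$.
\item The component $F(Y,\iota_u)=-\zeta(u)(Y)$ vanishes because $\zeta=0$.
\item The components $F(\iota_{u_1},\iota_{u_2})=\Upsilon(u_1,u_2)+\Upsilon(u_2,u_1)$ and $F(\iota_u,\iota_\lambda)(x)=-\lambda(\Upsilon(x,u))$ vanish because $\Upsilon=0$.
\item The remaining components are zero by definition.
\end{itemize}
Since $F\in\Omega^{2,1}_\cR$ is $\cR$-bilinear, it is determined by its values on the generating derivations $Y$, $\iota_u$, $\iota_\lambda$ of $\Der(\cR)$. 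Hence $F=0$.

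With $F=0$ and $H=0$, the differential \eqref{eq:dFt} reduces to $(d_\Der X, d_\Omega\alpha)$ and the bracket \eqref{eq:lm} loses its $\bm H$-term. This is exactly the standard dg Courant-Dorfman algebra of degree $-1$ from Example~\ref{ex:shifted}.

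The argument involves no real obstacle. The only point to confirm is that the data defining a standard action are precisely $\Upsilon=0$, $\eta=0$, $\zeta=0$, so that Theorem~\ref{thm:exact} applies verbatim.
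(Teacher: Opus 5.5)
Your proposal is correct and matches the paper's intended argument: the corollary is an immediate specialization of Theorem~\ref{thm:exact}, and setting $\Upsilon=0$, $\eta=0$, $\zeta=0$ makes every component in \eqref{eq:F0-components} vanish and gives $H=\eta=0$. (The paper also confirms this independently in \S\ref{subsec:compare}, where $\Psi$ identifies the extended BRST charge with $\Theta_{\mathrm{dR}}+Q_0$ on $T^*[2]T[1]\cN_0$.)
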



\section{(Bi-)Graded hamiltonian Poisson algebras}\label{sec:hamPA}

\subsection{The graded case}\label{subsec:graded} Let $A=\oplus_{i\in \bN} A_i$ be a graded commutative algebra  equipped with a graded Poisson bracket of degree $-2$, i.e., $\{A_i,A_j\}\subseteq A_{i+j-2}$ and
\begin{itemize}
           \item $\{a, b\} = -(-1)^{(|a| - 2)(|b|- 2)} \{b, a\};$
           \item $\{a, bc\} = \{a, b\}c + (-1)^{(|a|-2)|b|}b\{a, c\};$
           \item $(-1)^{(|a|-2)(|c| - 2)}\{a,\{b, c \}\} + (-1)^{(|b|-2)(|a| - 2)}\{b ,\{c, a \}\} + (-1)^{(|c|-2)(|b| - 2)}\{c,\{a, b \}\} = 0.$
\end{itemize}

Let  $\theta \in A_3$ be such that 
$$
\{\theta,\theta\}=0.
$$ 
We refer to the triple  $(A,\{\cdot,\cdot\}, \theta)$ as a {\em graded hamiltonian Poisson algebra} of degree -2. 

A central observation (following \cite[Thm.~4.5]{Roytenberg}) is that, when $A_0$ is regarded as a commutative algebra and $A_1$ as an $A_0$-module, the pair $(A_0,A_1)$ acquires the structure of a Courant-Dorfman algebra; the pairing, derivation and bracket are given, respectively, by
\begin{equation}\label{eq:CD}
\SP{a_1,a'_1}=\{a_1,a'_1\},\quad \partial a_0= \{\theta,a_0\} \quad \text{and} \quad [a_1,a'_1]=\{\{\theta,a_1\},a'_1\},
\end{equation}
for  $a_1,a'_1 \in A_1$ and $a_0 \in A_0$. So the $A_0$-valued symmetric bilinear pairing on $A_1$ is given by the Poisson bracket, while the derivation $\partial: A_0 \rightarrow A_1$ and the bracket $[\cdot, \cdot ] : A_1 \otimes A_1 \rightarrow A_0$ are derived brackets \cite{Yvette2} generated by $\theta$. 
We will extend  this result to the context of dg Courant-Dorfman algebras by introducing an additional grading on $A$ in the next subsection.

The following is a useful construction of Courant-Dorfman algebras via ``reduction''.
Given $(A, \{\cdot,\cdot\}, \theta)$, let $\cI \subseteq A$ be a homogeneous ideal that is {\em coisotropic}, i.e., $\{\cI,\cI\}\subseteq \cI$, and consider its normalizer,
$$
N(\cI)= \{ a\in A\,|\, \{a,\cI\}\subseteq \cI \}.
$$
Then $N(\cI)$ is a Poisson subalgebra of $A$, and $\cI\subseteq N(\cI)$ is a Poisson ideal. It follows that 
$N(\cI)/\cI$ inherits a graded Poisson bracket $\{\cdot,\cdot\}_{red}$ of degree -2.  If $\theta \in N(\cI)$ then it defines an element $\theta_{red} \in (N(\cI)/\cI)_3$ such that $\{\theta_{red},\theta_{red}\}_{red}=0$, and therefore induces a Courant-Dorfman algebra structure on the pair
$$ 
((N(\cI)/\cI)_0, (N(\cI)/\cI)_1).
$$

\subsection{The bigraded case}\label{subsec:bigraded} Let $\cA = \oplus_{i,k}\cA^{i,k}$, with $i\in \bN$ and $k\in \bZ,$ be a bigraded algebra with associative product satisfying
\begin{equation}\label{eq:bigcomm}
\cA^{i,k}\cA^{j,l}\subseteq \cA^{i+j,k+l}, \quad ab= (-1)^{kl} ba.
\end{equation}
We shall refer to the first grading on $\cA^{\bullet, \bullet}$ as the \emph{weight} and to the second as the \emph{function degree}. We keep the notation $| \cdot |$ for the function degree, i.e.,  for $a\in \cA^{i,k}$, $\m{a}=k$. 

A {\em Poisson bracket of bidegree $(-2, -2)$} on $\cA$ is a $\bR$-linear map $\{\cdot, \cdot\}: \cA \times \cA \rightarrow \cA$ satisfying 
\begin{equation}\label{eq:bideg}
\{ \cA^{i,k},\cA^{j,l} \}\subseteq \cA^{i+j-2,k+l-2},
\end{equation}
and that is a  graded Poisson bracket of degree $-2$ with respect to the function degree of $\cA^{\bullet, \bullet}$.

\begin{definition}\label{def:bigHPA}    
 We call $(\cA, \{\cdot, \cdot\})$ a {\em bigraded Poisson algebra of bidegree $(-2, -2)$}. 
 We will refer to an element
\begin{equation}\label{eq: charge grd hamil Poisson}
S = \Theta + Q \in \cA, \quad \text{with} \quad \Theta\in \cA^{3,3}, \quad Q\in \cA^{2,3},
\end{equation}
satisfying $\{S,S\}=0$ as a {\em charge}. The triple $(\cA, \{\cdot, \cdot\}, S)$ is then called a {\em bigraded hamiltonian Poisson algebra} (of bidegree $(-2, -2)$).
\hfill $\diamond$
\end{definition} 

A class of examples associated with dg manifolds is discussed in  $\S$ \ref{subsec:cotang}.

\begin{remark}[Relation with the graded case]\label{rem:graded} Given $(\cA, \{\cdot, \cdot\}, S)$ as in the previous definition, note that
$$
A= \oplus_{k\in \bN}\cA^{k,k} \subseteq \cA
$$
is a graded commutative algebra where the restriction of $\{\cdot,\cdot\}$ defines a graded Poisson bracket of degree $-2$, and therefore $(A, \{\cdot,\cdot\},\Theta)$ is a graded hamiltonian Poisson algebra of degree $-2$ (as in $\S$ \ref{subsec:graded}). On the other hand, any 
graded hamiltonian Poisson algebra of degree $-2$ can be identified with a bigraded hamiltonian Poisson algebra $(\cA, \{\cdot, \cdot\}, S)$ such that 
$\cA^{i,k}=\{0\}$ whenever $i\neq k$, i.e., 
$\cA= \oplus_k \cA^{k,k}$ (in this case, $Q \in \cA^{2, 3}$ vanishes and we are left with $\Theta \in \cA^{3, 3}$ satisfying $\{\Theta, \Theta\} = 0$).

\hfill $\diamond$
\end{remark}

By \eqref{eq:bideg}, the master equation $\{S, S\} = 0$ is equivalent to 
$$
\{Q, Q\} = 0, \quad \{Q, \Theta\} = 0, \;  \mbox{ and }\, \{\Theta, \Theta\} = 0.
$$
Note that the equations $\{Q, Q\} = 0$ and $\{\Theta, \Theta\} = 0$ are not trivial  because the Poisson bracket $\{\cdot, \cdot\}$ is of function degree $-2$ (even) and both $Q$ and $\Theta$ have function degree 3 (odd). 

Defining 
$$
d \coloneqq \{Q, \cdot\} \;\; \mbox{ and } \;\; \partial \coloneqq \{\Theta, \cdot\},
$$ 
we see that the operator $\{S,\cdot\}$ on $\cA^{\bullet, \bullet}$ decomposes according to the following diagram.

\begin{equation}\label{eq: diagram dg CD}
\begin{tikzcd}
\phantom{h} & \phantom{h} & \iddots & \iddots  & \iddots \\
{\scriptstyle{fdeg = \, 1}} & \cA^{0, 1} \arrow{ru}{\partial_0}  & \cA^{1, 1} \arrow{ru}{\partial_1} & \cA^{2, 1} \arrow{ru}{\partial_2}   & \iddots \\
{\scriptstyle{fdeg = \, 0}} & \cA^{0, 0} \arrow{ru}{\partial_0} \arrow{u}{d_0}  & \cA^{1, 0} \arrow{ru}{\partial_1}\arrow{u}{d_1} & \cA^{2, 0} \arrow{ru}{\partial_2} \arrow{u}{d_2} & \iddots \\
{\scriptstyle{fdeg = \, - 1}}& \cA^{0, -1} \arrow{u}{d_0} \arrow{ru}{\partial_0} & \cA^{1, -1} \arrow{ru}{\partial_1} \arrow{u}{d_1} & \cA^{2, -1} \arrow{u}{d_2}\arrow{ru}{\partial_2} & \iddots \\ 
\phantom{h} & \iddots \arrow{ru}{\partial_0} & \iddots \arrow{ru}{\partial_1} & \iddots \arrow{ru}{\partial_2} \\
\phantom{h} & {\scriptstyle{wght = \, 0}} & {\scriptstyle{wght = \, 1}} & {\scriptstyle{wght = \, 2}}
\end{tikzcd}
\end{equation}

Since $Q \in \cA^{2, 3}$ and $\Theta \in \cA^{3, 3}$ each satisfy the master equation, the Jacobi identity for $\{\cdot, \cdot\}$ implies that $d \coloneqq \{Q, \cdot \} $ and $\partial \coloneqq \{\Theta, \cdot \}$ are differentials on $\cA^{\bullet, \bullet}$. So, at each fixed weight level $k$, we have a vertical complex $(\cA^{k, \bullet}, d_k = \{Q, \cdot\}|_{\cA^{k, \bullet}})$ and, along the diagonals, we have complexes with differentials $\partial_k : \cA^{k, l} \rightarrow \cA^{k+1, \ell +1}$. Note, in particular, that $(\cA^{0, \bullet}, d_0)$ is a dg algebra and  $(\cA^{1, \bullet}, d_1)$ is a dg module over $(\cA^{0, \bullet}, d_0)$. 

The next result extends the construction in $\S$ \ref{subsec:graded} of usual Courant-Dorfman algebras from graded hamiltonian Poisson algebras of degree $-2$.

\begin{theorem}\label{thm: derived dg CD}
Let $(\cA, \{\cdot,\cdot\}, S)$ be a bigraded hamiltonian Poisson algebra of bidegree $(-2, -2)$. Then $(\cA^{0,\bullet},\cA^{1,\bullet})$ carries the structure of a  dg Courant-Dorfman algebra (of degree $-1$) with differentials $d_0 \coloneqq \{Q,\cdot\}|_{\cA^{0, \bullet}}, $ $d_1 \coloneqq \{Q,\cdot\}|_{\cA^{1, \bullet}}$ 
and pairing, derivation and bracket given, respectively, by
\begin{equation}\label{eq: symm prod, deriv, bracket - thm dgCD}
\SP{a^1_1,a^1_2}=\{a^1_1,a^1_2\}, \quad   \partial(a^0)= \{\Theta,a^0\} \quad \text{and}  \quad [a^1_1,a^1_2]= - (-1)^{|a^1_1|}\{\{\Theta,a^1_1\},a^1_2\},
\end{equation}
for $a^1_1, a^1_2 \in \cA^{1, \bullet}$ and $a^0 \in \cA^{0, \bullet}.$
\end{theorem}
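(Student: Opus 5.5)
The plan is a direct verification. Every axiom of Def.~\ref{def:dgDC} with $m=1$ should reduce to one of three ingredients: the graded Poisson identities (skew-symmetry, Leibniz, Jacobi) with respect to the function degree; the bidegree count \eqref{eq:bideg}; and the three consequences $\{Q,Q\}=\{Q,\Theta\}=\{\Theta,\Theta\}=0$ of the master equation.

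\textbf{Bidegree checks.} These show the maps land where they should. If $a_i^1\in\cA^{1,k_i}$, then $\{a^1_1,a^1_2\}\in\cA^{0,k_1+k_2-2}$, so the pairing has degree $-2m=-2$. Next, $\partial(\cA^{0,k})\subseteq\cA^{1,k+1}$, so $\partial$ has degree $m=1$. Finally, $\{\Theta,a^1_1\}\in\cA^{2,k_1+1}$, so the bracket lands in $\cA^{1,k_1+k_2-1}$ and has degree $-1$. Several statements are automatic from weights alone: $\{\cA^{0,\bullet},\cA^{0,\bullet}\}=0$, since weight $-2$ is zero as $i\in\bN$, and $\{\cA^{0,\bullet},\cA^{1,\bullet}\}=0$ for the same reason. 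Hence the pairing and bracket are $\cA^{0}$-bilinear in the required graded sense by the Leibniz rule, and the symmetry $\SP{e_1,e_2}=(-1)^{|e_1||e_2|+1}\SP{e_2,e_1}$ is exactly graded skew-symmetry of a degree $-2$ bracket. The dg algebra and module structures, and the derivation property of $\partial$, follow from the Leibniz rule for $\{Q,\cdot\}$ and $\{\Theta,\cdot\}$.

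\textbf{Compatibility with differentials.} Axioms (i.)--(iii.) are Jacobi identities combined with the master equation. Axiom (ii.) is just the statement that $\{Q,\cdot\}$ is a derivation of the bracket, since $Q$ has odd function degree. Axiom (iii.) comes from $\{Q,\{\Theta,r\}\}=\{\{Q,\Theta\},r\}-\{\Theta,\{Q,r\}\}=-\partial d r$, which gives the sign $(-1)^m=-1$. For (i.), the Leibniz property of $d=\{Q,\cdot\}$ on the double bracket $\{\{\Theta,e_1\},e_2\}$ produces a term $\{\{\{Q,\Theta\},e_1\},e_2\}$, which vanishes, plus terms that assemble into the stated sign once the normalization $-(-1)^{|e_1|}$ is carried through. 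The Leibniz-type Jacobi identity (c) for the bracket is the standard derived-bracket argument (Kosmann-Schwarzbach) applied to the odd element $\Theta$ with $\{\Theta,\Theta\}=0$. The derived bracket $\{\{\Theta,\cdot\},\cdot\}$ is Leibniz, and the prefactor $-(-1)^{|e_1|}$ is the adjustment that converts it into the sign convention with shifted degrees $|e|-1$, in the spirit of Prop.~\ref{prop:shifts}.

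\textbf{Axioms (1)--(5).} These follow Roytenberg's argument \cite[Thm.~4.5]{Roytenberg}, carried out with signs.
\begin{enumerate}
\item Axiom (1) is the Jacobi identity for $\{\Theta,e_1\}$ acting on $\{e_2,e_3\}$, together with $\{e_1,\{\Theta,\{e_2,e_3\}\}\}$ rewritten via Jacobi.
\item Axiom (2) comes from expanding $\{\Theta,\{e_1,e_2\}\}$ by Jacobi and using skew-symmetry.
\item Axiom (3) holds because $\{\Theta,\{\Theta,r\}\}=\tfrac12\{\{\Theta,\Theta\},r\}=0$.
\item Axiom (4) is Leibniz for $\{\Theta,e_1\}$ on $re_2$, using $\{e_1,r\}=0$.
\item Axiom (5) holds because $\{\{\Theta,r_1\},\{\Theta,r_2\}\}=\pm\{\Theta,\{r_1,\{\Theta,r_2\}\}\}\pm\{r_1,\{\Theta,\{\Theta,r_2\}\}\}$. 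The first term lies in $\{\Theta,\cA^{-1,\bullet}\}=0$ and the second vanishes by (3).
\end{enumerate}

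\textbf{Main obstacle.} The hard part is the sign bookkeeping, since the conventions of Def.~\ref{def:dgDC} are shifted and the bracket carries the prefactor $-(-1)^{|e_1|}$. I would organize this by first checking every identity with the unnormalized bracket $\{\{\Theta,e_1\},e_2\}$, as a degree-$0$-type structure in the shifted degrees. The conversion to the stated signs is then a single uniform reparametrization, analogous to Prop.~\ref{prop:shifts}, rather than five separate computations. Axiom (2), with its sign $(-1)^{m(|e_1|-1)}$, is the one I would check most carefully.
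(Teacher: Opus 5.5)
Your proposal is correct and follows the paper's own route: bidegree counting, the Leibniz and Jacobi identities, the three components $\{Q,Q\}=\{Q,\Theta\}=\{\Theta,\Theta\}=0$ of the master equation, and the Kosmann-Schwarzbach derived-bracket argument for (c). Your sketches of (1)--(5) check out, including the sign $(-1)^{|e_1|-1}$ in (2), and they supply the details that the paper leaves as a direct verification.

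There are two small inaccuracies, neither of which is load-bearing. First, only the pairing is $\cA^{0,\bullet}$-bilinear, not the bracket; its failure of linearity is exactly axiom (4). Second, the normalization $-(-1)^{|e_1|}$ is a separate degree-dependent sign change, not an instance of Prop.~\ref{prop:shifts}, which transports the bracket unchanged. Those signs therefore have to be tracked by hand, as you in fact do.
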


It is evident from the construction that an isomoprhism of bigraded hamiltonian Poisson algebras restricts to an isomorphism of the corresponding dg Courant-Dorfman algebras.

\begin{proof}
We will verify that the differentials $d_0$, $d_1$ together with  the pairing, derivation and bracket in \eqref{eq: symm prod, deriv, bracket - thm dgCD} fulfill the conditions in Def.~\ref{def:dgDC} as a consequence of the master equation for $S$ along with the Jacobi and Leibniz identities for the Poisson bracket $\{\cdot, \cdot\}.$   

Since $\{\cdot, \cdot\}$  is of bidegree  $(-2, -2)$, the restriction $\SP{\cdot,\cdot}:=\{\cdot, \cdot\}|_{\cA^{1, \bullet}}$ defines a skew-symmetric pairing on $\cA^{1, \bullet}$  of degree $-2$ taking values in $\cA^{0, \bullet}$ (see \eqref{eq:bideg}).  Also, since $\cA$ is non-negatively graded with respect to the weight grading, $\{\cA^{1, \bullet}, \cA^{0, \bullet} \} = 0 $ and therefore, by the Leibniz rule for $\{\cdot, \cdot\}$, the pairing $\SP{\cdot, \cdot}$ is bilinear over $\cA^{0, \bullet}$.  Hence $\SP{\cdot, \cdot}$ is as in Def.~\ref{def:dgDC} (a).

The fact that the map $\partial_0 \coloneqq \{\Theta, \cdot\}|_{\cA^{0, \bullet}}: \cA^{0, \bullet} \rightarrow \cA^{1, \bullet + 1}$ is a derivation of degree 1 (as in Def.~\ref{def:dgDC} (b)) follows from the Leibniz rule of $\{\cdot,\cdot\}$.

The derived bracket given by  
$$
[a_1, a_2] \coloneqq - (-1)^{\m{a_1}} \{ \{\Theta, a_1 \}, a_2  \}, \qquad a_1, a_2 
\in \cA,$$
is of function degree $-1$ and satisfies 
$$
[a_1, [a_2,a_3]] = [[a_1, a_2], a_3] + (-1)^{(\m{a_1} - 1)(\m{a_2}- 1)}[ a_2, [a_1,a_3]]
$$
as a consequence of $\{\Theta, \Theta\} = 0$  \cite{Yvette - derived bracket, Voronov - Higher brackets}. By \eqref{eq:bideg}, this bracket preserves $\cA^{1,\bullet}$. It follows that  $[\cdot, \cdot  ] : \cA^{1, \bullet} \otimes_\bR \cA^{1, \bullet} \rightarrow \cA^{1, \bullet}$ is a Leibniz bracket of degree $-1$ on $\cA^{1,\bullet}$, as in Def.~\ref{def:dgDC} (c).

By the Jacobi identity for $\{\cdot,\cdot\}$, the pairing $\SP{\cdot, \cdot}$ on $\cA^{1, \bullet}$ and the differentials $d_0 \coloneqq \{Q,\cdot\}|_{\cA^{0, \bullet}}$ and $d_1 \coloneqq \{Q,\cdot\}|_{\cA^{1, \bullet}}$ satisfy 
$$d_0 \langle a^1_1, a^1_2 \rangle = \langle d_1 a^1_1, a^1_2 \rangle + (-1)^{|a^1_1|} \langle a^1_1, d_1 a^1_2 \rangle, 
$$
for $a^1_1, a^1_2 
\in \cA^{1, \bullet}$, as in Def.~\ref{def:dgDC} (ii.).

Using that $\{Q, \Theta\} = 0$, it results from the Jacobi identity for $\{\cdot, \cdot\}$ that the differential $d_1 : \cA^{1, \bullet} \rightarrow \cA^{1, \bullet} $ and the Leibniz bracket $[\cdot, \cdot  ] : \cA^{1, \bullet} \otimes \cA^{1, \bullet} \rightarrow \cA^{1, \bullet}$ satisfy 
$$
d_1 [a^1_1, a^1_2] = [d_1 a^1_1 ,  a^1_2] + (-1)^{|a^1_1| - 1 } [a^1_1, d_1 a^1_2], \qquad a^1_1, a^1_2 
\in \cA^{1, \bullet}.
$$
Likewise, we obtain that 
$
d_1 \circ \partial_0 + \partial_0 \circ d_0 = 0. 
$
Thus, conditions (i.), (ii.), and (iii.) of Def.~\ref{def:dgDC} hold.

To conclude, one can show directly from  the Jacobi identity for the Poisson bracket $\{\cdot, \cdot\}$ and  $\{\Theta, \Theta\} = 0$  that conditions (1) to (5) in Def.~\ref{def:dgDC} are satisfied.
\end{proof}

\begin{remark}[Degree $-m$]
The construction in Theorem~\ref{thm: derived dg CD} admits an
extension to dg Courant-Dorfman algebras of degree $-m$: one replaces the graded commutativity property \eqref{eq:bigcomm} by
\[
ab =(-1)^{kl+(m+1)ij} ba, \qquad a\in \cA^{i,k}, b\in \cA^{j,l},
\]
and considers a Poisson bracket of bidegree $(-2,-2m)$ (defined by the previous commutation factor, i.e., $\{a,b\} = -(-1)^{kl+(m+1)ij}\{b,a\}$, etc.).
 One then takes $Q\in\cA^{2,\,2m+1}$, 
$\Theta\in\cA^{3,\,3m}$,
satisfying
$\{Q,Q\}=0$, $\{Q,\Theta\}=0$, and
$\{\Theta,\Theta\}=0$.
Then the formulas
$d=\{Q,\cdot\}$, $\SP{a_1,a_2}=\{a_1,a_2\}$, 
$\partial a=\{\Theta,a\}$, and
$$
[a_1,a_2] = (-1)^{m(|a_1|-1)}
\{\{\Theta,a_1\},a_2\}
$$
define on $(\cA^{0,\bullet},\cA^{1,\bullet})$
a dg Courant-Dorfman algebra of degree $-m$. 
\hfill $\diamond$
\end{remark}

\begin{remark}[Other generalizations]\
\begin{itemize}
\item[(a)] Theorem~\ref{thm: derived dg CD} holds sheaf theoretically, i.e., it yields a sheaf of dg Courant-algebras algebras from a sheaf of bigraded hamiltonian Poisson algebras of bidegree $(-2,2)$.
\item[(b)] By considering additional terms in the charge $S$, one naturally obtains {\em homotopical versions} of Courant-Dorfman algebras, which are enhancements of Leibniz infinity algebras \cite{Poncin,Uchino}.
\end{itemize}
\hfill $\diamond$
\end{remark}

Given $(\cA, \{\cdot,\cdot\}, S=Q+\Theta),$
denote by $H_Q^{\bullet,\bullet}(\cA)$ the vertical cohomology of \eqref{eq: diagram dg CD}, i.e., 
$H^{k,l}_Q(\cA)$ is the $l$-th cohomology group of the complex $(\cA^{k,\bullet},d_k)$. 
A direct consequence of the previous theorem and Prop.~\ref{prop: CD algebra in cohomology} is that
the pair 
\begin{equation*}\label{eq:cohCD}
(H^{0,0}_Q(\cA),H^{1,1}(\cA))
\end{equation*}
acquires the structure of a Courant-Dorfman algebra. This fact can also be understood in light of the following observations. Since $d=\{Q,\cdot\}$ is a Poisson derivation, the Poisson bracket on $\cA$ descends to a Poisson bracket on $H_Q(\cA)$ via 
$$
\{[a], [b]\} = [\{a,b\}],
$$
where $[a]$, $[b]$ denote cohomology classes of $d$-closed elements $a, b \in \cA$.

\begin{proposition}\label{prop:Qcoh}
The triple $(H_Q(\cA), \{\cdot,\cdot\},[\Theta])$   is a bigraded hamiltonian Poisson algebra of bidegree $(-2,-2)$. 
\end{proposition}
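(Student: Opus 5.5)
The plan is to check, in order, the three ingredients of Def.~\ref{def:bigHPA} on $H_Q(\cA)$: a bigraded commutative algebra structure, a Poisson bracket of bidegree $(-2,-2)$, and a charge.

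\textbf{Bigrading and product.} Since $Q\in\cA^{2,3}$ and the bracket has bidegree $(-2,-2)$, the operator $d=\{Q,\cdot\}$ maps $\cA^{i,k}$ to $\cA^{i,k+1}$. It therefore preserves weight and raises function degree by one. Hence both $\ker d$ and $\operatorname{im} d$ are bigraded subspaces, and $H_Q(\cA)=\oplus_{i,k}H^{i,k}_Q(\cA)$ is bigraded; weights stay in $\bN$.

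The Leibniz rule for the bracket gives $d(ab)=d(a)b+(-1)^{|a|}a\,d(b)$, because $|Q|-2=1$. So $\ker d$ is a subalgebra and $\operatorname{im} d\subseteq\ker d$ is an ideal in it. The product descends to $H_Q(\cA)$, and \eqref{eq:bigcomm} passes to classes.

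\textbf{Poisson bracket.} The graded Jacobi identity gives
\[
d\{a,b\}=\{da,b\}+(-1)^{|a|}\{a,db\}.
\]
The sign is $(-1)^{(|Q|-2)(|a|-2)}=(-1)^{|a|}$. It follows that $\{\ker d,\ker d\}\subseteq\ker d$. Moreover, for $a,b$ closed we have $\{dc,b\}=d\{c,b\}$ up to sign, and similarly in the other slot, so $\{\ker d,\operatorname{im} d\}\subseteq\operatorname{im} d$. Thus the bracket $\{[a],[b]\}=[\{a,b\}]$ is well defined, as noted before the statement.

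Skew-symmetry, the Leibniz rule and the Jacobi identity are identities between representatives. They hold in $\cA$, so they hold on classes. The bidegree $(-2,-2)$ is inherited from \eqref{eq:bideg}.

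\textbf{Charge.} The master equation decomposes as $\{Q,Q\}=\{Q,\Theta\}=\{\Theta,\Theta\}=0$. In particular $d\Theta=\{Q,\Theta\}=0$, so $\Theta$ defines a class $[\Theta]\in H^{3,3}_Q(\cA)$, and
\[
\{[\Theta],[\Theta]\}=[\{\Theta,\Theta\}]=0.
\]
Take the new charge to be $S'=[\Theta]+Q'$ with $Q'=0\in H^{2,3}_Q(\cA)$; the master equation then reduces to $\{[\Theta],[\Theta]\}=0$.

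One might ask whether $Q$ itself should survive as $[Q]$. Since $Q=\tfrac12\{Q,\cdot\}$ applied to $Q$ is not generally available, we instead observe that $\{Q,\cdot\}$ induces the zero map on $H_Q$, which is consistent with the choice $Q'=0$. Including $[Q]$ would at most add an element with $\{[Q],\cdot\}=0$ and is unnecessary.

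\textbf{Where care is needed.} The only delicate point is sign bookkeeping in showing that $d$ is a derivation of both the product and the bracket with the correct Koszul signs; this uses $|Q|$ odd and $|Q|-2$ odd. Everything else is formal.
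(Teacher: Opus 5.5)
Your proof is correct. It follows the route the paper indicates: $d=\{Q,\cdot\}$ is a Poisson derivation of bidegree $(0,1)$, so the bigraded product and bracket descend to $H_Q(\cA)$, and $\{Q,\Theta\}=0=\{\Theta,\Theta\}$ make $[\Theta]$ a charge with zero $H^{2,3}_Q$-component. Your aside about $[Q]$ is garbled: in fact $dQ=\{Q,Q\}=0$, so $[Q]$ is a well-defined central class, and $[Q]+[\Theta]$ would also satisfy the master equation. This does not affect the argument, since the statement takes the charge to be $[\Theta]$.
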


It follows (see Remark~\ref{rem:graded}) that $\oplus_k H_Q^{k,k}(\cA)$ inherits the structure of a graded hamiltonian Poisson algebra of degree $-2$, which induces (as explained in $\S$ \ref{subsec:graded}) a Courant-Dorfman algebra on $(H^{0,0}_Q(\cA),H^{1,1}_Q(\cA))$, which coincides with the one described above.

Our next step toward proving Theorem~\ref{thm:main} is to construct a bigraded hamiltonian Poisson algebra from the data of a hamiltonian action on a Courant algebroid. This will be achieved via a BFV-type construction. 


\section{Graded symplectic viewpoint}\label{sec:graded}

In this section, we briefly recall how Courant algebroids and their hamiltonian reductions can be interpreted in graded geometric terms, see \cite{RoytenbergThesis,Roytenberg} and \cite{GGGR}.

\subsection{Symplectic $\bN Q$-manifolds of degree 2}
 An $\mathbb{N}$-manifold of degree 2  of dimension $n_0|n_1|n_2$, denoted by $\cM$, consists of an $n_0$-dimensional smooth manifold  $M$ endowed with a (non-negatively) graded commutative sheaf of algebras $C_\cM$  satisfying the following local property: any $x \in M$ admits an open neighborhood $U \subseteq M$ such that  
    \begin{equation}\label{eq: local triv sheaf deg 2 mnf}
    C_\cM|_U \cong C^\infty_U \otimes \wedge^\bullet \bR^{n_1} \otimes S^\bullet \bR^{n_2},
    \end{equation}
where the elements of $\bR^{n_i}$ have degree $i$. 
The local condition implies that, over sufficiently small open subsets $U\subseteq M$,
there exist local sections 
\begin{equation}\label{eq:localcoord}
x^1,\ldots,x^{n_0}, e^1,\ldots, e^{n_1}, p^1,\ldots,p^{n_2},
\end{equation}
where $\{x^i\}$ are usual local coordinates on $U$, the sections $\{e^\mu\}$ have degree 1, and $\{p^I\}$ have degree 2, in such a way that each homogeneous section of degree $q$ is a degree $q$ polynomial in the variables $e^\mu$ and $p^I$ over $C^\infty(U)$. 

We often refer to sections of $C_\cM$ as ``functions'' on $\cM$.
We denote by $(C_\cM)_q$ the subsheaf of degree $q$ sections of $C_\cM$. It follows from \eqref{eq: local triv sheaf deg 2 mnf} that $(C_\cM)_0$ is identified with $C^\infty_M$, and that, for each $q=1,2,\ldots$, $(C_\cM)_q$ is a locally free sheaf of $C^\infty_M$-modules, and therefore is the sheaf of sections of a vector bundle. We use the notation $C(\cM)=C_\cM(M)$ and $C_q(\cM)=(C_\cM)_q(M)$ for the corresponding spaces of global sections.

A {\em symplectic structure} on $\cM$ is a Poisson bracket $\{\cdot,\cdot\}$ on $C_\cM$ of degree $-2$ that is {\em nondegenerate}. In local coordinates \eqref{eq:localcoord}, nondegeneracy means that the matrices $(\{p^I,x^j\})$ and $(\{e^\mu,e^\nu\})$ are pointwise invertible; in particular, this implies that $n_0=n_2$.

Consider a degree 2 symplectic $\bN$-manifold $(\cM, \{\cdot,\cdot\})$ together with a function $\theta\in C_3(\cM)$ satisfying $\{\theta,\theta\}=0$. Recall that this last condition implies that the derivation $Q:=\{\theta,\cdot\}$ of $C_\cM$ satisfies $Q^2=0$, so it is a ``homological'' hamiltonian vector field on $\cM$. We refer to the triple $(\cM, \{\cdot,\cdot\}, \theta)$ as a {\em symplectic $\bN Q$-manifold of degree 2}.


\subsection{Courant algebroids}\label{subsection: grd view courant alg}
Courant algebroids admit an alternative formulation as symplectic $\bN Q$-manifolds of degree 2, 
\begin{equation}\label{eq:RS}
\;\quad \mbox{ Courant algebroids} \quad \rightleftharpoons \quad  \mbox{Symplectic $\bN Q$-manifold of degree 2}.
\end{equation}
\smallskip
In this equivalence,  pseudo-Riemannian vector bundles $(E, \SP{\cdot,\cdot})$ correspond to degree 2 symplectic $\bN$-manifolds $(\cM, \{\cdot,\cdot\})$ \cite[Theorem~3.3]{Roytenberg} (see also \cite[$\S$~3.2]{GGGR}), while Courant structures $\rho$, $\Cour{\cdot,\cdot}$ on $(E, \SP{\cdot,\cdot})$ correspond to self-commuting functions $\theta\in C_3(\cM)$ \cite[Theorem~4.5]{Roytenberg}. We thus refer to such $\theta$ as a {\em Courant charge}.

One direction of \eqref{eq:RS} follows from the construction in $\S$~\ref{sec:hamPA}. 
Let $(\mathcal{M}, \{\cdot,\cdot\}, \theta)$ be a symplectic $\mathbb{N}Q$-manifold of degree 2, and let $E\to M$ be the vector bundle determined by $C_1(\cM)=\Gamma(E)$. 
Let 
\begin{equation}\label{eq:gralg}
\Fun := \oplus_{i=0}^\infty C_i(\cM) \subset C(\cM),
\end{equation}
which is a graded commutative algebra. 
The Poisson bracket on $C(\cM)$ restricts to $\Fun$, and $(\Fun, \{\cdot,\cdot\}, \theta)$ is a graded hamiltonian Poisson algebra of degree -2. Following $\S$~\ref{sec:hamPA}, this determines a nondegenerate Courant-Dorfman algebra $(C^\infty(M), \Gamma(E))$, and hence a Courant algebroid structure on $E$, via (cf. \ref{eq:CD})
\begin{equation}\label{eq: Courant str RS corresp}
\SP{e_1,e_2}=\{e_1,e_2\}, \quad \Lie_{\rho(e)}f = \{\{\theta, e\}, f\} \quad \text{and} \quad 
\Cour{e_1,e_2}= \{\{\theta, e_1\},e_2\},
\end{equation}
for $e, e_1,e_2 \in \Gamma(E)$ and $f\in C^\infty(M)$. 


The converse direction follows from the fact that every Courant algebroid arises as in the previous construction, from a canonically associated degree $2$ symplectic $\bN Q$-manifold. Given a Courant algebroid $(E, \SP{\cdot,\cdot}, \rho, \Cour{\cdot,\cdot})$, the corresponding graded manifold $\cM$ is characterized by
\[
C_1(\cM) =\Gamma(E), \qquad C_2(\cM)=\Gamma(\bA_E),
\]
where $\bA_E$ is the Atiyah algebroid of the pseudo-Riemannian vector bundle $(E, \SP{\cdot,\cdot})$, with Poisson bracket determined by the relations
$$
\{e_1,e_2\} =\SP{e_1,e_2}, \quad\;
\{\widetilde{e},f\} =\Lie_Xf, \quad\;  
\{\widetilde{e},e\} =D(e), \quad \;
\{\widetilde{e}_1, \widetilde{e}_2\} =([X_1,X_2],[D_1,D_2]), 
$$ 
for $f\in C^\infty(M)$, $e, e_i \in \Gamma(E)$, $\widetilde{e}=(X,D), \widetilde{e}_i=(X_i,D_i) \in \Gamma(\mathbb{A}_E)$, $i=1,2$. The Courant charge $\theta$ can be written in local coordinates \eqref{eq:localcoord} as
\begin{equation}\label{eq: coordinate for theta eq from a courant structure}
     \theta = a_{\mu I} (x) e^\mu p^I  + \phi_{\mu \nu \eta}(x)e^\mu e^\nu e^\eta,
\end{equation}
with the coefficients $a_{\mu I}$, $\phi_{\mu \nu \xi} \in C^{\infty}(M)$ uniquely determined by $\rho$ and $\Cour{\cdot,\cdot}$ by the last two equations in \eqref{eq: Courant str RS corresp}, see \cite[Thm.~4.5]{Roytenberg}.

\subsection{Hamiltonian actions}\label{subsection: grd view hamilt actions}
 We now recall the framework of graded hamiltonian actions from \cite[$\S$~8]{GGGR}.
Let $(\cM, \{\cdot,\cdot\}, \theta)$ be a symplectic $\bN Q$-manifold of degree 2, and let $(\overline{\g}, [\cdot,\cdot], \delta)$ be a dgla of the form $\ov\g=\h[2]\oplus \a[1]\oplus \g$.

Following the terminology of symplectic geometry \cite[$\S$~8.2]{GGGR}, we say that
$\cM$ is a {\em hamiltonian $\ov\g$-space} if it is equipped with a graded map
$$
J^\sharp: \ov\g[-2] \to \Fun\subset C(\cM)
$$
that preserves brackets and satisfies 
\begin{equation}\label{eq:diffpres}
J^\sharp(\delta(u)) = \{\theta, J^\sharp(u)\},
\end{equation}
for all $u\in \ov\g$.
Equivalently, 
\begin{equation*}\label{eq:dglamap}
J^ \sharp: \ov\g \to \Fun[2] \subset C(\cM)[2],
\end{equation*}
is a morphism of dglas from 
$(\overline{\g}, [\cdot,\cdot], \delta)$ to $(\Fun[2], \{\cdot,\cdot\},\{\theta,\cdot\})$. 

The graded map $J^\sharp$ is equivalent to a morphism of $\bN$-manifolds of degree 2,
\begin{equation}\label{eq:mommap}
J: \cM \to \ov\g^*[2],
\end{equation}
where $\ov\g^*[2]$ is the $\bN$-manifold with body $\h^*$ and sheaf of functions $C^\infty_{\h^*}\otimes \wedge^\bullet \a \otimes S^\bullet \g$, in which elements of $\a$ have degree 1 and elements of $\g$ have degree 2. Keeping the standard terminology, we say that the map $\ov \g \to \mathfrak{X}(\cM)=\mathrm{Der}(C(\cM))$, $u \mapsto \{J^\sharp u, \cdot\}$, is a {\em hamiltonian $\ov\g$-action} on $\cM$, with $J$ its {\em moment map}. With a slight abuse of notation, we also refer to $J^\sharp$ as a moment map. 

When $\delta=0$ and $\theta=0$, we recover the notion of a hamiltonian action of a graded Lie algebra $\ov\g$ on a degree 2 symplectic $\bN$-manifold $(\cM, \{\cdot,\cdot\})$.


Let $(E, \SP{\cdot,\cdot}, \rho, \Cour{\cdot,\cdot})$ be the Courant algebroid corresponding to $(\cM, \{\cdot,\cdot\}, \theta)$, and let $(\g, \a, \h, \delta, \varpi)$ correspond to $(\ov\g, [\cdot,\cdot], \delta)$, as in $\S$~\ref{subsection: hmlt actions Courant alg}. A graded map
$J^\sharp: \ov\g[-2] \to C(\cM)$
is specified by its components in degrees $0$, $1$, and $2$, namely by maps 
$$
\mu^*: \h \to C^\infty(M), \;\quad \varrho: \a \to \Gamma(E), \;\quad \varphi: \g \to \Gamma(\bA_E).
$$
The condition that $J^\sharp$ be a morphism of dglas \eqref{eq:dglamap} amounts to the requirement that $(\varphi,\varrho,\mu)$ defines a hamiltonian action on $(E, \SP{\cdot,\cdot}, \rho, \Cour{\cdot,\cdot})$ in the sense of Def.~\ref{def:hamact}, see \cite[$\S$~8.3]{GGGR}. In other words, the equivalence between Courant algebroids and symplectic $\bN Q$-manifolds of degree $2$ extends to an equivalence between hamiltonian actions in each setting,
\smallskip
$$
\begin{array}{c}
\text{Hamiltonian $\ov\g$-action on $(\cM,\{\cdot,\cdot\},\theta)$ with} \\
\text{moment map $J^\sharp: \ov\g[-2] \to C(\cM)$}
\end{array}\quad\;
\rightleftharpoons\quad\;
\begin{array}{c}
\text{Hamiltonian action $(\varphi,\varrho,\mu)$ of } (\g,\a,\h,\delta,\varpi) \\
\text{on }(E,\SP{\cdot,\cdot},\rho,\Cour{\cdot,\cdot})
\end{array}
$$
\smallskip

Moreover, with respect to this equivalence, the hamiltonian reduction of Courant algebroids (recalled in $\S$\ref{subsection: Courant red}) agrees with the  symplectic (Marsden-Weinstein) reduction in the realm of graded symplectic manifolds \cite[$\S$~8.4]{GGGR}. We will revisit this fact from the perspective of $\S$ \ref{sec:hamPA}.

Given a hamiltonian $\ov \g$-space with moment map $J^\sharp: \ov\g[-2]\to \Fun$, we let 
\begin{equation}\label{eq:coisoideal}
\cI=\langle J^\sharp(\ov \g[-2]) \rangle \subseteq \Fun
\end{equation}
be the homogeneous ideal generated by the image of $J^\sharp$. The fact that $J^\sharp$ preserves brackets implies that $\cI$ is coisotropic, i.e., $\{\cI,\cI\}\subseteq \cI$. Considering the normalizer of $\cI$ in $\Fun$,
$$
N(\cI) = \{ f\in \Fun \, |\,  \{f, \cI\} \in \cI \},
$$
it follows that $N(\cI)/\cI$ is a graded Poisson algebra of degree -2. By the compatibility of $J^\sharp$ with differentials \eqref{eq:diffpres}, we have that $\theta\in N(\cI)$, so $\theta$ defines an element $\theta_{red} \in (N(\cI)/\cI)_3$ satisfying $\{\theta_{red},\theta_{red}\}=0$. In this way $N(\cI)/\cI$ becomes a graded hamiltonian Poisson algebra of degree -2, and therefore induces a Courant-Dorfman algebra structure on the pair 
$$
((N(\cI)/\cI)_0, (N(\cI)/\cI)_1).
$$

Let us now assume the regularity conditions (r1), (r2) from $\S$\ref{subsection: Courant red}. From the graded-geometric viewpoint, these conditions mean that $0$ is a regular value for the moment map $J$ \eqref{eq:mommap}, so that  $\cI$ is the vanishing ideal of a graded submanifold $J^{-1}(0)\subset \cM$ \cite[$\S$ 2.4]{GGGR}. In this case, in terms of the hamiltonian action on the corresponding Courant algebroid, we have
\begin{equation}\label{eq:algreduc}
(N(\cI)/\cI)_0 = C^\infty(\mu^{-1}(0))^\g, \quad \quad \quad (N(\cI)/\cI)_1 = \Gamma(K^\perp/K)^\g,
\end{equation}
where $\Gamma(K^\perp/K)^\g$ and $C^\infty(\mu^{-1}(0))^\g$ denote, respectively, the spaces of $\g$-invariant sections of $K^\perp/K$ and $\g$-invariant functions on $\mu^{-1}(0)$ (see \cite[Prop.~4.15, Lem.~8.6]{GGGR}). These identifications endow the pair
$$
(C^\infty(\mu^{-1}(0))^\g, \Gamma(K^\perp/K)^\g)
$$ 
with the structure of a Courant-Dorfman algebra.
When the hamiltonian action  is regular (as in Def.~\ref{def:regact}), this Courant-Dorfman algebra is naturally identified with the one defined by the reduced Courant algebroid $E_{red} \rightarrow M_{red}$ of Theorem \ref{thm:geometricCred}. 

Denoting by $\cM_{red}$ the symplectic $\bN Q$-manifold of degree 2 corresponding to $E_{red} \rightarrow M_{red}$, we note that, by construction, its associated graded hamiltonian Poisson algebra is identified with $N(\cI)/\cI$. On the other hand, under the same regularity assumptions, $N(\cI)/\cI$
models the graded Poisson algebra of functions on the symplectic reduction of $\cM$ with respect to $J$ at level $0$, thereby showing that hamiltonian Courant reduction is the classical geometric counterpart 
of graded symplectic reduction.

\section{BFV in degree 2}\label{sec:BFV}

In this section, we present a BFV model for hamiltonian reduction of symplectic $\bN Q$-manifolds of degree 2, following the classical (non-graded) BFV construction \cite{Kostant, Stasheff}; see \cite{Carvalho} for the case of degree $1$.

\subsection{BFV algebra}\label{subsec:BFValg}

Consider a graded Lie algebra of the form $\ov\g = \h[2] \oplus \a[1] \oplus \g$, and let $(\cM, \{\cdot, \cdot\})$ be a symplectic $\bN$-manifold of degree $2$. Consider the graded commutative algebra
\begin{equation}\label{eq: bfv alg def}
\cA_{\scriptscriptstyle{BFV}} = \Fun\otimes S({\ov\g}^*[-1]) \otimes S(\ov\g[-1]),
\end{equation}
where the symmetric algebras have graded generators with the following degrees: 
\begin{center}
\begin{tabular}{c|c|c|c|c|c}
    $\h$ & $\a$ & $\g$ & $\g^*$ & $\a^*$ & $\h^*$ \\
     \hline
     -1 & 0 & 1 & 1 & 2 & 3
\end{tabular}
\hspace{0.1cm}.
\end{center}
We may regard  $\cA_{\scriptscriptstyle{BFV}}$ as the algebra of functions on the $\bZ$-graded manifold $\cM \times T^*[2]{\ov\g}[1]$, see $\S$ \ref{sec:interpret}.

We denote by $(\cA_{\scriptscriptstyle{BFV}})_{\ell} \subseteq \cA_{\scriptscriptstyle{BFV}}$ the subset of functions of degree $\ell$, and  let 
$$
A^{p,q}=\Fun\otimes S^p({\ov\g}^*[-1])\otimes S^q(\ov\g[-1]).
$$ 
We will say that $a^{p, q} \in A^{p, q}$ has {\em ghost number} $p,$ {\em antighost number} $q$ and {\em total ghost number} $p-q$  (cf. \cite{AleZabzine, Carvalho}).  We let $\cA^k_{\scriptscriptstyle{BFV}} \coloneqq \oplus_{k=p-q}A^{p,q}$, and denote by
$$
\cA^{k,\ell}_{\scriptscriptstyle{BFV}}=\cA^k_{\scriptscriptstyle{BFV}}\cap (\cA_{\scriptscriptstyle{BFV}})_\ell \subset \cA_{\scriptscriptstyle{BFV}}
$$ 
the subset of elements of total ghost number  $k$ and function degree $\ell$. We will often use the shorthand notations $tgh$ and $fdeg$ for the total ghost number and function degree, respectively.

We can further endow the graded algebra $\cA_{\scriptscriptstyle{BFV}}$ with a Poisson bracket of function degree $-2$, referred to as the {\em BFV bracket}  \cite{Stasheff}; this is defined by the Poisson bracket on $\Fun$ and the duality between ${\ov\g}^*[-1]$ and ${\ov\g}[-1]$, with the convention
$$
\{v,v^*  \}_{\scriptscriptstyle{BFV}}=v^*(v), \qquad \;\; v\in \a, \;\;v^*\in \a^*[-2],
$$
and extended to the whole algebra $\cA_{\scriptscriptstyle{BFV}}$ through the Leibniz rule. With respect to the ghost-antighost degrees, the BFV bracket satisfies
\begin{equation}
\{A^{i, j}, A^{k, \ell}\}_{\scriptscriptstyle{BFV}} \subset A^{i+k, j+ \ell} \oplus A^{i+k - 1, j + \ell - 1},
\end{equation}
so it is of degree zero with respect to the total ghost number.
If there is no risk of confusion, we denote $\{\cdot,\cdot\}_{\scriptscriptstyle{BFV}}$ simply by $\{\cdot,\cdot\}$.

In $\S$ \ref{subsec:CD-BFV} below we will replace the total ghost number $tgh$ by a new degree $wght:= fdeg - tgh$ with respect to which the BFV bracket has bidegree $(-2,-2)$, thereby fitting into the framework of $\S$ \ref{subsec:bigraded}.

\subsection{The BRST differential and cohomology}\label{subsec:BRSTcoh}
Suppose now that $(\cM, \{\cdot,\cdot\})$ is a hamiltonian $\ov\g$-space with moment  map
$$
J^\sharp: \ov\g[-2] \rightarrow \Fun.
$$ 

We encode these data in the so-called {\em BRST charge} $Q \in \cA^{1,3}_{\scriptscriptstyle{BFV}}$, 
where $Q =  Q^{1, 0} + Q^{2, 1}$, with
$$
Q^{1, 0} \in  \Fun\otimes S^1({\ov\g}^*[-1]) \quad \text{and} \quad Q^{2,1} \in S^2({\ov\g}^*[-1]) \otimes S^1({\ov\g}[-1])  
$$ 
completely determined by the following conditions:
\begin{equation}\label{eq: def conditions for BRST charge}
\{Q^{1, 0}, u\} = J^\sharp(u), \;\;\; \forall \, u\in \ov\g, \quad \text{and} \quad \{Q^{2, 1 }, \cdot \,  \}|_{\scriptscriptstyle{S^\bullet({\ov\g}^*[-1])}} = \dCE^{\ov\g}.
\end{equation}
In the first equality, we regard $u$ as an element of $S^1(\ov\g[-1])$ on the left-hand side and as an element of $\ov\g[-2]$  on the right-hand side; in the second equality, $\dCE^{\ov\g} : S^\bullet({\ov\g}^*[-1]) \rightarrow S^{\bullet + 1}({\ov\g}^*[-1]) $ denotes the Chevalley-Eilenberg differential for the graded Lie algebra $\ov\g$. 

A direct computation shows that 
\begin{align*}
    \{Q^{1,0}, Q^{1, 0}\} + 2 \{Q^{1, 0}, Q^{2, 1}\} & = 0,
\end{align*}
while  the Jacobi identity for the Lie bracket on $\ov\g,$ ensures that $\{Q^{2, 1}, Q^{2, 1}\} = 0.$
It follows that the BRST charge $Q \in \cA_{BFV}^{1,3}$ satisfies
\begin{equation}
    \{Q, Q\} = \{Q^{1,0}, Q^{1, 0}\} + 2 \{Q^{1, 0}, Q^{2, 1}\} + \{Q^{2, 1}, Q^{2, 1}\} = 0.
    \end{equation}

The {\em BRST differential} is defined  by $d_{\scriptscriptstyle{BRST}} \coloneqq \{Q, \cdot\}.$ Since $tgh(Q)= 1$ and $tgh(\{\cdot,\cdot\})= 0,$ we obtain a complex
$$
\begin{tikzcd}[column sep=large] 
\cdots \arrow{r}{d_{\scriptscriptstyle{BRST}}} & \cA^{ k}_{\scriptscriptstyle{BFV}}\arrow{r}{d_{\scriptscriptstyle{BRST}}} &  \cA^{k+1}_{\scriptscriptstyle{BFV}}  \arrow{r}{d_{\scriptscriptstyle{BRST}}} & \cdots,
\end{tikzcd}
$$
so $(\cA^\bullet_{\scriptscriptstyle{BFV}}, \{\cdot,\cdot\}, d_{\scriptscriptstyle{BRST}})$ is a differential graded Poisson algebra (with grading given by $tgh$).

\begin{remark}\label{rem: coord brst charge}
    For computations, it is useful to write the BRST charge $Q \in \cA^{1,3}_{BFV}$ in terms of a basis for $\ov\g=\h[2]\oplus \a[1]\oplus \g$. Let us fix $\{w^i\},\{v^j\}, \{u^k\}$ bases for $\h, \a$ and $\g$, and denote the dual bases by $\{w^*_i\},\{v^*_j\}, \{u^*_k\}$. 
    We obtain from \eqref{eq: def conditions for BRST charge} that 
    \begin{align}
        Q^{1, 0} & = J^\sharp_0(w^i)w^*_i - J^\sharp_1(v^j)v^*_j + J^\sharp_2(u^k)u^*_k, \label{eq: Q10 comp BRST}\\
        Q^{2, 1} & = - \frac{1}{2}c_k^{ij} u^*_iu^*_j u^k
        -d^{mn}_p u_m^* v_n^* v^p 
        - e^{ij}_k u_i^* w_j^* w^k
        - \frac{1}{2} f^{mn}_p v_m^* v_n^* w^p, \label{eq: Q21 comp BRST} 
    \end{align}
    where the coefficients in \eqref{eq: Q10 comp BRST} are the generators of the coisotropic ideal $\cI \subset C(\cM)$ in \eqref{eq:coisoideal}, and those in \eqref{eq: Q21 comp BRST} are the structure constants of the graded Lie algebra $\ov\g$ with respect to the given basis.
    \hfill $\diamond$
\end{remark}

We refer to the triple
$$
(\cA_{\scriptscriptstyle{BFV}}, \{\cdot,\cdot\}, Q)
$$ 
as the {\em BFV algebra}. This is the degree $2$ analogue of the BFV algebra constructed in \cite{Carvalho} in the context of hamiltonian reduction in degree $1$. 

The following properties of the cohomology $H^\bullet_{{\scriptscriptstyle{BRST}}}$ of $(\cA^\bullet_{\scriptscriptstyle{BFV}}, d_{\scriptscriptstyle{BRST}})$ are parallel to those in \cite[Lemma~5.3]{Carvalho}, and verified with similar proofs:
\begin{itemize}
    \item The cohomology $H^\bullet_{{\scriptscriptstyle{BRST}}}$ (which is computed with respect to the total ghost number) is graded by the function degree, i.e., for $p \in \bZ,$ 
\begin{equation}
 H^{p}_{{\scriptscriptstyle{BRST}}} \coloneqq H^{p, \bullet}_{{\scriptscriptstyle{BRST}}} = \oplus_{ \substack{k \geq p \\ k \in \bZ}} H_{{\scriptscriptstyle{BRST}}}^{p, k},
\end{equation}
where $H_{{\scriptscriptstyle{BRST}}}^{p, k}$ denotes the space of cohomology classes of function degree $k$ at total ghost number $p$. In particular, $H^{0, \bullet}_{d_{\scriptscriptstyle{BRST}}}$ is a graded commutative algebra;
\item The Poisson bracket $\{\cdot,\cdot\}$ on $\cA_{\scriptscriptstyle{BFV}}$, which is of function degree $-2$, descends to $H^{0, \bullet}_{{\scriptscriptstyle{BRST}}}$; keeping the same notation for the induced bracket in cohomology, 
we have that  
$$
(H^{0, \bullet}_{{\scriptscriptstyle{BRST}}}, \{\cdot, \cdot\})
$$ 
is a graded Poisson algebra of degree $-2$. (With the different grading mentioned at the end of $\S$ \ref{subsec:BFValg}, this result can also be regarded as a consequence of Prop.~\ref{prop:Qcoh}.)

\end{itemize}

Recall from $\S$ \ref{subsection: grd view hamilt actions} the coisotropic ideal $\cI = \SP{J^\sharp(\ov\g[-2])}\subseteq \Fun$ and the associated graded Poisson algebra of degree $-2$ given by $N(\cI)/\cI$. Consider the natural map
\begin{equation} \label{Map 0th cohomology and N(I)/I}
   \Phi:  H^{0, \bullet}_{{\scriptscriptstyle{BRST}}}  \longrightarrow \frac{N(\cI)}{\cI},  \quad [a^{0,0} + a^{1,1} + \cdots]  \longmapsto \overline{a^{0, 0}}
\end{equation}
which is well-defined by the same argument as in the proof of  \cite[Lemma~5.2]{Carvalho}.
The following result is analogous to \cite[Thm~5.2]{Carvalho} and proven similarly.

\begin{theorem}\label{thm: H0 BFV2} The map 
$\Phi:  H^{0, \bullet}_{{\scriptscriptstyle{BRST}}}\to  {N(\cI)}/{\cI}$
is a morphism of degree -2 Poisson algebras, and is an isomorphism when $0 \in \ov\g[-2]$ is a regular value of the moment map $J$ (i.e., when conditions (r1) and (r2) in $\S$ \ref{subsection: Courant red} are satisfied).
\end{theorem}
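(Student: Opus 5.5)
The plan is to follow the classical BFV argument (Kostant--Sternberg, Stasheff), adapted to degree $2$ as in \cite[Thm~5.2]{Carvalho}. We filter $\cA_{\scriptscriptstyle{BFV}}$ by antighost number and split $d_{\scriptscriptstyle{BRST}}=\{Q,\cdot\}$ by its effect on the bigrading $(p,q)$. The part lowering $q$ by one, $\partial_K:=\{Q^{1,0},\cdot\}$ restricted to the $S(\ov\g[-1])$-variables (sending $u\mapsto J^\sharp(u)$), is the Koszul differential of the generators of $\cI$. The part preserving $q$ and raising $p$ by one is the longitudinal (Chevalley--Eilenberg type) differential $\delta_L$ of the $\ov\g$-action $u\mapsto\{J^\sharp u,\cdot\}$ on $\Fun$, twisted by the coadjoint action on the antighosts. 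Higher components raise $p$ further.

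\textbf{Morphism part.} Write a cocycle of total ghost number $0$ as $a=a^{0,0}+a^{1,1}+\cdots$. The $(0,0)$-component of $\{Q,a\}=0$ reads $\{J^\sharp(u),a^{0,0}\}u^* + \partial_K a^{1,1}=0$ in $A^{1,0}$, up to sign. Since $\partial_K a^{1,1}\in \cI\otimes\ov\g^*[-1]$, this gives $\{J^\sharp(\ov\g),a^{0,0}\}\subseteq\cI$, i.e.\ $a^{0,0}\in N(\cI)$. Coboundaries $\{Q,b^{0,1}+\cdots\}$ have $(0,0)$-component $\partial_K b^{0,1}\in\cI$, so $\Phi$ is well defined. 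It is multiplicative because the $(0,0)$-component of a product is the product of the $(0,0)$-components. For the bracket, $\{a,b\}^{0,0}=\{a^{0,0},b^{0,0}\}$ plus terms pairing ghosts with antighosts, namely $\{a^{1,1},b^{1,1}\}$-contractions. These contractions land in $\cI$, since each involves a $\partial_K$-exact factor, or can be handled by the same bookkeeping as in \cite[Lemma~5.2]{Carvalho}. Function degrees are preserved throughout, so $\Phi$ is a morphism of degree $-2$ graded Poisson algebras.

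\textbf{Isomorphism under regularity.} By (r1), (r2), $0$ is a regular value of $J$ and the components of $J^\sharp$ with respect to bases of $\h,\a,\g$ form a regular sequence in the graded sense. Then the Koszul complex $(\Fun\otimes S(\ov\g[-1]),\partial_K)$ is acyclic in positive antighost and has $H_0=\Fun/\cI=C(J^{-1}(0))$. The graded Koszul complex mixes exterior and symmetric generators, according to the parity of the degrees in $\ov\g[-1]$. Acyclicity is local, and locally we can choose graded coordinates in which the constraints are coordinates (\cite[\S2.4]{GGGR}), so it reduces to the standard polynomial Koszul computation. We also need a contracting homotopy $h$ that is compatible with function degree. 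With acyclicity in hand, the spectral sequence of the antighost filtration (bounded in each function degree, since function degree bounds the relevant $p,q$) has $E_1=C(J^{-1}(0))\otimes S(\ov\g^*[-1])$ concentrated in $q=0$. Its $E_2$ is the Lie algebroid/CE cohomology of $\ov\g$ acting on $C(J^{-1}(0))$, and it collapses, giving $H^{0,\bullet}_{\scriptscriptstyle{BRST}}\cong H^0_{CE}(\ov\g,\Fun/\cI)=(N(\cI)/\cI)$. The last equality is the standard identification of $N(\cI)/\cI$ with $\ov\g$-invariants in $\Fun/\cI$ (Hamiltonian vector fields of $\cI$ are tangent to $J^{-1}(0)$, and invariance modulo $\cI$ is exactly normalizing).

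For explicit surjectivity, given $f\in N(\cI)$ we build $a=f+a^{1,1}+a^{2,2}+\cdots$ inductively, solving $\partial_K a^{k+1,k+1}=-(\text{obstruction})$ using $h$. Acyclicity of $\partial_K$ in positive antighost makes each obstruction $\partial_K$-closed, hence exact. The process terminates in each function degree by the degree counts of the generators in the table. Injectivity is the analogous inductive descent: if $a^{0,0}\in\cI$, we kill the components step by step.

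\textbf{Main obstacle.} I expect the main obstacle to be the graded Koszul acyclicity: showing that regularity of $J$ implies the constraints of degrees $0,1,2$ (the latter with antighosts in $\g$ of degree $1$, hence exterior, and the $\h$-antighosts of degree $-1$) form a regular sequence in the sense needed. This includes the fact that the degree-$2$ constraints $\varphi(u)$ are independent modulo the others on $J^{-1}(0)$. This should follow from the graded regular value theorem in \cite{GGGR} and a local normal form, after which the rest is formal homological perturbation.
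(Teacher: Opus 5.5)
Your overall route is the argument the paper intends: Koszul resolution of $\Fun/\cI$ by the antighosts, then the longitudinal Chevalley--Eilenberg differential, with $\Phi$ appearing as the edge map onto $E_2^{0,0}=H^0_{CE}(\ov\g,\Fun/\cI)=N(\cI)/\cI$. The paper only says the result is proven as in \cite[Thm~5.2]{Carvalho}, and your convergence and inductive arguments are fine.

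\emph{The gap.} The step ``by (r1), (r2), the components of $J^\sharp$ form a regular sequence'' is false, and it is exactly the point you flagged as the main obstacle. The symbol map gives $C_2(\cM)/C_1(\cM)^2\cong\Gamma(\bA_E)/\Gamma(\wedge^2E)\cong\mathfrak X(M)$. Hence the degree-$2$ constraints $\varphi(u)$ can be made part of a coordinate system near $x\in\mu^{-1}(0)$ only if $u\mapsto\underline{\varphi}(u)_x$ is injective, i.e.\ the $\g$-action is locally free along $\mu^{-1}(0)$. This is the degree $-2$ component of surjectivity of $dJ_x$. It is independent of (r1) and (r2), which only control the degree $0$ and $-1$ components.

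\emph{Counterexample.} Take $M$ a point, $E=\bR^2$ with the Euclidean metric and zero bracket (so $\theta=0$), $\h=\a=0$, and $\g=\mathfrak{so}(2)$. Then $J^\sharp(u)=e^1e^2$ and $Q=e^1e^2u^*$, and (r1), (r2) hold vacuously. The element $e^1e^2\,u^*u$ is a BRST cocycle of total ghost number $0$ and function degree $4$. It is not exact: the only candidate primitive, $e^1e^2u$, is itself closed. Yet $\Phi$ kills it. Correspondingly, the Koszul complex has $H_1\neq 0$, because $e^1e^2$ is a zero divisor.

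\emph{The fix.} Add local freeness of the $\g$-action on $\mu^{-1}(0)$ as a hypothesis. It is automatic under Definition~\ref{def:regact}, since $G$ acts freely, and that is the setting of Theorem~\ref{thm:main}; the parenthetical ``i.e., (r1) and (r2)'' in the statement has the same imprecision. With this hypothesis, the local normal form is routine. Near $\mu^{-1}(0)$, take the $\mu_w$ as part of the coordinates on $M$, the $\varrho(v)$ as part of a frame of $E$, and the $\varphi(u)$ as part of the degree-$2$ coordinates. Away from $\mu^{-1}(0)$, some $\mu_w$ is a unit, so the Koszul complex is contractible. Since $\partial_K$ is $\Fun$-linear, the local contracting homotopies glue with a partition of unity.

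\emph{A smaller point.} In the bracket computation, $\{a^{1,1},b^{1,1}\}$ contributes nothing to the $(0,0)$-component. The BFV bracket contracts at most one ghost--antighost pair, so $\{A^{i,i},A^{k,k}\}\subseteq A^{i+k,i+k}\oplus A^{i+k-1,i+k-1}$. The only candidates for a contraction landing in $A^{0,0}$ are $\{a^{1,1},b^{0,0}\}$ and $\{a^{0,0},b^{1,1}\}$. Neither has a contraction term, because $\Fun$ carries no ghosts or antighosts. Hence $\{a,b\}^{0,0}=\{a^{0,0},b^{0,0}\}$ exactly, and your appeal to ``$\partial_K$-exact factors'' is unjustified but also unnecessary.
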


This result shows that the BFV algebra $(\cA_{\scriptscriptstyle{BFV}}, \{\cdot,\cdot\}, Q)$ extends the classical homological formulation of ordinary hamiltonian reduction \cite{Kostant, Stasheff} to the context of symplectic $\bN$-manifolds of degree $2$. 
We will now consider an enhancement of the BFV algebra in order to model the reduction of symplectic $\bN Q$-manifolds of degree 2.

\subsection{Extended BFV algebra} Suppose  that $(\ov\g, [\cdot,\cdot],\delta)$ is a dgla and that $(\cM, \{\cdot,\cdot\},\theta)$ is a hamiltonian $\ov\g$-space with moment map $J^\sharp$, as in $\S$ \ref{subsection: hmlt actions Courant alg}.
We now see how to incorporate $\delta$ and $\theta$ into the BRST algebra $(\cA_{\scriptscriptstyle{BFV}}, \{\cdot,\cdot\}, d_{\scriptscriptstyle{BRST}})$.

Regarding $\delta$, we define an element 
$$
\theta^{1,1}\in A^{1,1}\cap (\cA_{\scriptscriptstyle{BFV}})_3\subseteq \cA^{0,3}_{\scriptscriptstyle{BFV}}
$$
characterized by the property that $\{\theta^{1,1}, \cdot \, \}|_{\ov\g^*[-1]}: \ov\g^*[-1] \rightarrow \ov\g^*[-1]$ is the graded dual to the differential $\delta: \ov\g[-1] \rightarrow \ov\g[-1]$,
\begin{equation}\label{eq:deltavee}
\delta^\vee: \overline{\g}^*[-1]\to \overline{\g}^*[-1], \qquad \delta^\vee{\lambda} = - (-1)^{\m{\lambda}} \lambda \circ \delta,
\end{equation}
so that 
$$
\delta^\vee|_{\g^*}=\delta^*_\a, \qquad  \delta^\vee|_{\a^*}= -\delta_\h^*, \qquad  \delta^\vee|_{\h^*}= 0.
$$ 
In terms of bases $\{w^i\}$, $\{v^j\}$, $\{u^k\}$ for $\h, \a$ and $\g,$ respectively, we may write $\delta: \ov\g \rightarrow \ov\g$ as 
\begin{equation}\label{eq: delta in coordinates - proof}
    \delta w^i = a^i_jv^j \quad \text{and} \quad \delta v^j = b^j_k u^k, \quad a^i_j, b^j_k \in \mathbb{R},
\end{equation} 
and $\theta^{1,1}$ takes the explicit form 
\begin{equation}\label{eq: theta11 coordinates}
    \theta^{1, 1} =  - a^i_j w^*_i v^j + b^\ell_k v^*_\ell u^k.
\end{equation}
A direct verification shows that the above expression is independent of the choice of bases, and hence defines an element in $A^{1,1}\cap (\cA_{\scriptscriptstyle{BFV}})_3$.

Let
\begin{equation}\label{eq: courant cocyle dgla case}
    \Theta \coloneqq \theta +\theta^{1,1} \in \cA_{\scriptscriptstyle{BFV}}^{  0, 3},
\end{equation}

\begin{proposition}\label{prop: Courant cocycle dgla action}
The element $\Theta$ satisfies
$$
d_{\scriptscriptstyle{BRST}}\Theta = \{Q, \Theta \}=0 \quad \mbox{and} \quad \{\Theta, \Theta\} = 0.
$$
\end{proposition}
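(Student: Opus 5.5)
We expand both brackets using $\Theta=\theta+\theta^{1,1}$ and $Q=Q^{1,0}+Q^{2,1}$, and sort the pieces by ghost/antighost bidegree. Here $\theta\in\Fun$ is pure, of bidegree $(0,0)$, and $\theta^{1,1}$ has no $\Fun$-component. Since $\theta$ and $\theta^{1,1}$ live in commuting tensor factors, $\{\theta,\theta^{1,1}\}=0$. Thus
$$\{\Theta,\Theta\}=\{\theta,\theta\}+\{\theta^{1,1},\theta^{1,1}\}.$$
The first term vanishes by assumption, as $\theta$ is a Courant charge. For the second, note that $\{\theta^{1,1},\cdot\}$ restricted to $\ov\g^*[-1]$ is $\delta^\vee$, and restricted to $\ov\g[-1]$ it is (up to sign) $\delta$. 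Both square to zero because $\delta^2=0$. Jacobi gives $\{\{\theta^{1,1},\theta^{1,1}\},x\}=2\{\theta^{1,1},\{\theta^{1,1},x\}\}$ on generators. Hence $\{\theta^{1,1},\theta^{1,1}\}$ Poisson-commutes with all generators of $S(\ov\g^*[-1])\otimes S(\ov\g[-1])$. By nondegeneracy of the duality pairing it is then a constant, and a constant of function degree $4$ and bidegree $(1,1)$ must be zero. Alternatively, one can check this directly from \eqref{eq: theta11 coordinates}: the bracket equals a multiple of $a^i_jb^j_k\,w_i^*u^k$, which vanishes.

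For $\{Q,\Theta\}$ we use the four terms
$$\{Q^{1,0},\theta\}+\{Q^{1,0},\theta^{1,1}\}+\{Q^{2,1},\theta\}+\{Q^{2,1},\theta^{1,1}\}.$$
The term $\{Q^{2,1},\theta\}$ vanishes since the two elements lie in different factors. The term $\{Q^{2,1},\theta^{1,1}\}$ lies in bidegree $(2,1)$ and involves only the ghost factors. Evaluating it on generators via Jacobi, it expresses that $\delta$ is a derivation of the graded bracket on $\ov\g$, i.e. the dual statement that $\delta^\vee$ commutes (graded) with $\dCE^{\ov\g}$. This is the dgla compatibility, encoded in \eqref{eq:condsbeta} and the $\g$-equivariance of $\delta$. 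The remaining terms $\{Q^{1,0},\theta\}+\{Q^{1,0},\theta^{1,1}\}$ lie in bidegree $(1,0)$, that is, in $\Fun\otimes\ov\g^*[-1]$. A bidegree-$(1,0)$ element is determined by its brackets with $u\in\ov\g[-1]$. Using Jacobi together with $\{\theta,u\}=0$ and $\{Q^{1,0},u\}=J^\sharp(u)$, we get
$$\{\{Q^{1,0},\theta\},u\}=\pm\{\theta,J^\sharp(u)\}.$$
Similarly, since $\{\theta^{1,1},u\}=\pm\delta u$, Jacobi gives
$$\{\{Q^{1,0},\theta^{1,1}\},u\}=\pm J^\sharp(\delta u)\pm\{\{Q^{1,0},u\},\theta^{1,1}\}.$$
The last bracket vanishes, because $J^\sharp(u)\in\Fun$ commutes with $\theta^{1,1}$. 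The sum is therefore $\pm\bigl(\{\theta,J^\sharp(u)\}-J^\sharp(\delta u)\bigr)$, which vanishes by \eqref{eq:diffpres}.

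The main obstacle is sign bookkeeping. We must confirm that the relative signs produced by the sign convention \eqref{eq:deltavee} actually produce a cancellation and not a doubling. We must also check that the bidegree-$(2,1)$ identity reduces exactly to the dgla axioms, including the $\varpi$-terms in \eqref{eq:condsbeta}. For the latter, I would first verify it in the bases of Remark~\ref{rem: coord brst charge}, checking the identity separately on each type of generator $w^*_i, v^*_j, u^*_k$ (and on $w^k,v^p,u^k$). There $\delta$-compatibility with the structure constants $c,d,e,f$ is exactly the statement that $\delta$ is a derivation of $[\cdot,\cdot]$ on $\ov\g$.
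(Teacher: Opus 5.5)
Your proposal is correct and is essentially the paper's proof. It uses the same four-term splitting of $\{Q,\Theta\}$, the same use of $\delta^2=0$ for $\{\theta^{1,1},\theta^{1,1}\}$, and the same Jacobi-plus-detection argument that reduces $\{Q^{2,1},\theta^{1,1}\}=0$ to $\dCE^{\ov\g}\circ\delta^\vee+\delta^\vee\circ\dCE^{\ov\g}=0$. Your handling of the $\Fun\otimes\ov\g^*[-1]$ terms, by bracketing with $x\in\ov\g[-1]$, is a coordinate-free replacement for what the paper only calls a direct computation.

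The sign you flag does give cancellation. Applying Jacobi to $\{\theta^{1,1},\{x,\lambda\}\}=0$ and using \eqref{eq:deltavee} gives $\{\theta^{1,1},x\}=-\delta x$, which also follows from \eqref{eq: theta11 coordinates}. Hence $\{\{Q^{1,0},\theta+\theta^{1,1}\},x\}=\{\theta,J^\sharp(x)\}-J^\sharp(\delta x)$, and this vanishes by \eqref{eq:diffpres}.
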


\begin{proof}
 We fix bases $\{w^i\}$, $\{v^j\}$, $\{u^k\}$ for $\h, \a$ and $\g$, and write $\theta^{1,1}$ as in \eqref{eq: theta11 coordinates}.

Since $\{\theta, \theta\} = 0$ and $a^i_j, b^\ell_k \in \mathbb{R}$,
we have 
$$
\{\Theta, \Theta\} = -2\{a_j^i w_i^* v^j, b^\ell_k v^*_\ell u^k\}=-2  a_j^i b^\ell_k \{ w_i^* v^j, v^*_\ell u^k \}
     = -2a_j^i b^\ell_k w^* _i \{v^j, v^*_\ell u^k \} =-2 a_\ell^i b^\ell_k w^*_i u^k,
$$ 
which vanishes because $\delta^2 = 0$. Hence $\{\Theta, \Theta\} = 0.$

We now show that $\{Q, \Theta\} = 0.$ Recall that $Q = Q^{1, 0} + Q^{2, 1} \in \cA^{1, 3}_{\scriptscriptstyle{BFV}}$, with $Q^{1, 0}$ and $Q^{2,1}$ as in \eqref{eq: Q10 comp BRST} and \eqref{eq: Q21 comp BRST}, respectively.
Then 
$$
 \{Q, \Theta\} = \{Q^{1,0}, \theta\} + \{Q^{1, 0}, \theta^{1, 1}\} + \{Q^{2, 1}, \theta\} + \{Q^{2, 1}, \theta^{1, 1}\}.
$$
Since $\theta \in \Fun$ and $Q^{2,1} \in S^2({\ov\g}^*[-1]) \otimes S^1({\ov\g}[-1])$, by the definition of the BFV bracket we have that $\{Q^{2,1},\theta\}=0$.

We claim that $\{Q^{1,0}, \theta\} + \{Q^{1, 0}, \theta^{1, 1}\} = 0$. 
This can be verified by a direct computation using the expressions \eqref{eq: Q10 comp BRST} and \eqref{eq: theta11 coordinates}, along with 
the differential-preserving condition \eqref{eq:diffpres}, which can be written in terms of \eqref{eq: delta in coordinates - proof} as
\begin{align}
    \{\theta, J^\sharp_0(w^i)\} & = J^\sharp_1 (\delta w^i) = a^i_jJ^\sharp_1(v^j), \label{ theta J0 = aJ1} \\
     \{\theta, J^\sharp_1(v^j)\} & = J^\sharp_2 (\delta v^j) = b^j_k J^\sharp_2 (u^k), \label{theta J1v = bJ2}\\
     \{\theta, J^\sharp_2(u^k)\} & = 0. \label{theta J2u = 0}
\end{align}

To conclude the proof, we must check the remaining condition $\{Q^{2, 1}, \theta^{1, 1}\} = 0$.
By \eqref{eq: def conditions for BRST charge} and \eqref{eq:deltavee},
the hamiltonian vector
fields of $Q^{2,1}$ and $\theta^{1,1}$ satisfy
\[
\{Q^{2,1},\cdot\}
\big|_{S^\bullet(\ov\g^*[-1])}
=
d_{\scriptscriptstyle{CE}}^{\ov\g}, \qquad \; \{\theta^{1,1},\cdot\}
\big|_{\ov\g^*[-1]} = \delta^\vee.
\]
We extend $\delta^\vee$ as a degree $1$ derivation of $S^\bullet(\ov\g^*[-1])$.

The condition that $\delta$ is a derivation of the graded Lie bracket,
\[
\delta[x,y]
=
[\delta x,y]
+
(-1)^{|x|}[x,\delta y],
\]
is equivalent to
\begin{equation*}\label{eq:CE-delta-anticommute}
d_{\scriptscriptstyle{CE}}^{\ov\g}\circ\delta^\vee + \delta^\vee\circ d_{\scriptscriptstyle{CE}}^{\ov\g} = 0.
\end{equation*}
For every 
$\lambda\in\ov\g^*[-1]$, the graded Jacobi identity gives
$$
\bigl\{\{Q^{2,1},\theta^{1,1}\},\lambda\bigr\}
=
\bigl\{Q^{2,1}, \{\theta^{1,1},\lambda\}\bigr\} + \bigl\{\theta^{1,1},
       \{Q^{2,1},\lambda\}\bigr\} = d_{\scriptscriptstyle{CE}}^{\ov\g}
\bigl(\delta^\vee\lambda\bigr)
+
\delta^\vee
\bigl(d_{\scriptscriptstyle{CE}}^{\ov\g}\lambda\bigr) =0.
$$
Since
\[
\{Q^{2,1},\theta^{1,1}\}
\in
S^2(\ov\g^*[-1])\otimes S^1(\ov\g[-1])
\]
and the canonical brackets with the linear ghost coordinates detect
its component in $\ov\g[-1]$, the preceding identity for every
$\lambda\in\ov\g^*[-1]$ implies that
\[
\{Q^{2,1},\theta^{1,1}\}=0.
\]
\end{proof}

We call the element $\Theta$ in \eqref{eq: courant cocyle dgla case} a {\em Courant cocycle}. It follows from the previous proposition that
\begin{equation}\label{eq: extended brst charge}
S \coloneqq Q + \Theta \in \cA^{1, 3}_{\scriptscriptstyle{BFV}} \oplus \cA^{0, 3}_{\scriptscriptstyle{BFV}} 
\end{equation}
satisfies the master equation $\{S, S\} = 0$. We shall refer to $S$ as the {\em extended BRST charge} and 
$$
(\cA_{\scriptscriptstyle{BFV}}, \{\cdot, \cdot\}, S)
$$  
as the {\em extended BFV algebra}. 

Note that the map $\Phi: H^{0, \bullet}_{{\scriptscriptstyle{BRST}}}  \to N(\cI)/\cI$ in \eqref{Map 0th cohomology and N(I)/I} satisfies
\begin{equation}\label{eq:PhiTheta}
\Phi([\Theta]) = \overline{\theta}.
\end{equation}
We therefore view the extended BFV algebra as the homological model for the hamiltonian reduction of the symplectic $\bN Q$-manifold $(\cM, \{\cdot, \cdot\}, \theta )$.

\section{The dg Courant-Dorfman algebra of the BFV model}\label{sec:proof}

In this section, we prove  Theorem~\ref{thm:main} by associating a dg Courant-Dorfman algebra structure to the extended BFV algebra of a  hamiltonian reduction in degree $2$.
This will be done via Theorem~\ref{thm: derived dg CD} once we fit BFV algebras into the framework of $\S$ \ref{subsec:bigraded}.

\subsection{Weights  on the BFV algebra}\label{subsec:CD-BFV}

Let $\cA_{\scriptscriptstyle{BFV}}$ be a BFV algebra as in $\S$ \ref{subsec:BFValg}.
We introduce a new grading on this algebra by defining the {\em weight} of an element $a\in \cA_{\scriptscriptstyle{BFV}}^{k, \ell} $ to be $wght(a) :=  \ell - k$, i.e., 
$$
wght = fdeg - tgh.
$$


With this grading, the generators of the BFV algebra coming from the Lie algebra $\ov\g$ have the following degrees:

\begin{center}
\begin{tabular}{c|c|c|c|c|c|c}

    & $\h$ & $\a$ & $\g$ & $\g^*$ & $\a^*$ & $\h^*$  \\
     \hline
     $wght$ & 0 & 1 & 2 & 0 & 1 & 2      
\end{tabular}
\hspace{0.1cm},
\end{center}
whereas on $\Fun$  weight coincides with function degree, since $tgh(\Fun)= 0$. It follows that $\cA_{\scriptscriptstyle{BFV}}$ is non-negatively graded with respect to $wght$.

We regrade $\cA_{\scriptscriptstyle{BFV}}$ by replacing the $tgh$ by $wght$, i.e., we consider
the bigraded algebra $\cA^{\bullet, \bullet}$ given by
\begin{equation}\label{eq:regrade}
\cA^{\ell - k, \ell} \coloneqq \cA_{\scriptscriptstyle{BFV}}^{k, \ell}.
\end{equation}

The BFV bracket on $\cA^{\bullet, \bullet}$ is of weight and function degree $-2$, so it is of bidegree $(-2, -2)$. In terms of the new grading, 
we have 
$$
Q \in \cA^{2, 3} =  \cA^{1, 3}_{\scriptscriptstyle{BFV}} \quad \text{and} \quad \Theta \in  \cA^{3, 3} = \cA^{0, 3}_{\scriptscriptstyle{BFV}}.
$$ 
Therefore (cf. Def. \ref{def:bigHPA}), 
$$
(\cA^{\bullet, \bullet},\{\cdot,\cdot\}, S=\Theta+Q)
$$ 
is a bigraded hamiltonian Poisson algebra of bidegree $(-2, -2)$. 

The differential $\{S,\cdot\}$ on $\cA^{\bullet, \bullet}$ decomposes as in diagram \eqref{eq: diagram dg CD}. The differentials along the diagonals are given by the action of the Courant cocycle \eqref{eq: courant cocyle dgla case}, whereas the vertical differentials correspond to the BRST differential, which preserves components of a fixed weight (as the BRST charge is of weight 2 and the BFV bracket of weight -2).

By Theorem \ref{thm: derived dg CD}, we know that the pair 
$$
(\cA^{0,\bullet},\cA^{1,\bullet})
$$
carries an induced dg Courant-Dorfman structure. Next, we describe it explicitly.

\subsection{Proof of Theorem \ref{thm:main}}

Recall the setup of Theorem \ref{thm:main}: a hamiltonian action $(\varphi, \varrho, \mu)$ of  a dgla $\ov\g$, described by  $(\g, \a, \h, \delta, \varpi)$, on a Courant algebroid $(E\to M, \SP{\cdot,\cdot}, \rho, \Cour{\cdot,\cdot})$; see $\S$ \ref{subsection: hmlt actions Courant alg}. 

We consider the corresponding graded-geometric formulation,
in which $(E\to M, \SP{\cdot,\cdot}, \rho, \Cour{\cdot,\cdot})$ is encoded by its associated symplectic $\bN Q$-manifold $(\cM, \{\cdot,\cdot\},\theta)$ (see $\S$~\ref{subsection: grd view courant alg}), endowed with a hamiltonian $\ov\g$-action with moment map
$$
J^\sharp: \ov\g[-2] \rightarrow \Fun
$$ 
such that $J_0^\sharp = \mu^*$, $J_1^\sharp = \varrho$ and $J_2^\sharp = \varphi$.

Consider the graded algebra $\cA_{\scriptscriptstyle{BFV}}$ \eqref{eq: bfv alg def}, and $\cA^{\bullet,\bullet}$ introduced in $\S$ \ref{subsec:CD-BFV}, see \eqref{eq:regrade}. At weight $0$ we find the graded algebra
\begin{equation}\label{eq: weight zero part}
\cR:= \cA^{0,\bullet} = C^\infty(M)\otimes \wedge \g^* \otimes \wedge \h,
\end{equation}
whereas at weight $1$ we find the graded $\cR$-module 
\begin{equation}\label{eq: weight one part}
\cE := \cA^{1,\bullet} =  \cR \otimes_{C^\infty(M)}(\Gamma(E[-1] \oplus \a_M^*[-2] \oplus \a_M)).
\end{equation}
The differentials $\dR: \cR^{\bullet } \rightarrow \cR^{ \bullet + 1}$ and  $\dE: \cE^{\bullet} \rightarrow \cE^{\bullet+1}$ are obtained by the restriction of the BRST differential $d_{\scriptscriptstyle{BRST}} = \{ Q, \, \cdot \,\}$ to $\cA^{0, \bullet}$ and $\cA^{1,\bullet}$, respectively. 

Using the coordinate expression for the BRST charge in Remark~\ref{rem: coord brst charge} and the definition of the BFV bracket, a direct computation shows that, for $f \in C^\infty(M),$ $ u\in \g,$ $ u^* \in \g^*,$ and $w \in \h,$ we have 
\begin{equation}\label{eq: weight zero brst diff}
\dR(f)(u)=\Lie_{\underline{\varphi}(u)}f,     \quad \dR(u^*) =  \dCE(u^*) \quad \text{and} \quad \dR(w) = \mu^*(w) + \dCE ^\h(w),    
\end{equation}
where $\underline{\varphi}(u)=\sigma(\varphi(u)) \in \mathfrak{X}(M)$ (see \eqref{eq:Mact}), while $\dCE$ and $\dCE ^\h$ denote the Chevalley-Eilenberg differentials for the $\g$-modules $\bR$ (trivial) and $\h$, respectively.

Similarly, $\dE$ satisfies 
\begin{equation*}\label{eq: weight one brst diff}
\dE(e)(u) =\varphi(u)e, \quad   \dE(e)(v) = - \SP{\varrho(v),e},
\quad \dE(v^*) = d_{\scriptscriptstyle{CE}}^{\a^*}(v^*), \quad \dE(v) =  \varrho(v) + \iota_v\varpi + d^\a_{\scriptscriptstyle{CE}}(v),
\end{equation*}
for $e \in \Gamma(E)$, $ u \in \g,$ $ v\in\a $ and $v^* \in \a^*$, with $d_{\scriptscriptstyle{CE}}^{\a^*}$ and $d_{\scriptscriptstyle{CE}}^{\a}$ the Chevalley-Eilenberg differentials corresponding to  the $\g$-modules $\a^*$ and $\a$.

The pairing $\SP{\cdot, \cdot}_{\scrsE}: \cE \otimes_{\cR} \cE \rightarrow \cR$ is 
the restriction of the BFV bracket to $\cA^{1, \bullet}$, hence given by
\begin{equation*}\label{eq: pairing - proof}
\SP{e_1, e_2}_{\scrsE} = \SP{e_1, e_2},  \quad  \SP{v^\prime, v^*}_{\scriptscriptstyle{\cE}} = v^*(v^\prime) 
\quad \text{and} \quad \SP{e, v^\prime}_{\scrsE} = 0 = \SP{e, v^*}_{\scrsE},
\end{equation*} 
for $e, e_1, e_2 \in \Gamma(E),$ $v^\prime\in \a$ and $v^* \in \a^*$.

The derivation and Leibniz bracket induced by the Courant cocyle \eqref{eq: courant cocyle dgla case} satisfy
\begin{itemize}
\item $\partial: \cR \rightarrow \cE$, 
\begin{equation}\label{eq: partial from Courant cocycle - proof}
\partial(u^*) = \delta^*(u^*), \quad \partial(w) = -\delta(w) \quad \text{and} \quad \partial(f) = \rho^*(df),
\end{equation}
for $u \in \g^*,$ $w \in \h$ and $f \in C^{\infty}(M);$

\item $\Cour{\cdot,\cdot}_{\scrsE}: \cE \otimes_{\bR} \cE \rightarrow \cE$, 
$$
\Cour{e_1, e_2}_{\scrsE} \coloneqq  \Cour{e_1, e_2} \quad \text{and} \quad \Cour{e, v}_{\scrsE} =  \Cour{e, v^*}_{\scrsE} = \Cour{v, v^*}_{\scrsE} = \Cour{v_1,v_2}_{\scrsE} = \Cour{v_1^*, v_2^*}_{\scrsE}=0
$$
for $e, e_1, e_2 \in \Gamma(E),$ $v, v_1,v_2 \in \a$ and $v^*,v_1^*,v_2^* \in \a^*.$ 
\end{itemize}

Since the pairing $\SP{\cdot, \cdot}_{\scrsE}$ is nondegenerate,  the derivation $\partial: \cR \rightarrow \cE$ is equivalent to an anchor map $\rhocE : \cE \rightarrow \mathrm{Der}(\cR)$ via the relation $\rhocE(\cdot)(r)=\SP{\, \cdot \,,\partial(r)}$, for $r\in \cR$. From \eqref{eq: partial from Courant cocycle - proof}, it follows that $\rhocE : \cE \rightarrow \mathrm{Der}(\cR)$ satisfies 
\begin{equation*}\label{eq: anchor from Courant cocyle - proof}
\rhocE (e) = \rho(e), \quad \rhocE(v) = \iota_\delta(v) \quad \text{and} \quad \rhocE(v^*) = \iota_\delta^*(v^*),
\end{equation*}
for $e \in \Gamma(E),$ $v\in \a$ and $v^*\in \a^*.$ 

Therefore, the pair $(\cR, \cE)$, with $\cR$ as in \eqref{eq: weight zero part} and $\cE$ as in \eqref{eq: weight one part}, is endowed with a dg Courant-Dorfman structure with differentials, pairing, bracket and anchor determined as above. This proves the first part of Theorem \ref{thm:main}. 

As for the second part, note that, by \eqref{eq:regrade}, we have the identification (of graded Poisson algebras)
$$
\oplus_kH_Q^{k,k}(\cA) = H^{0,\bullet}_{\scriptscriptstyle{BRST}}.
$$
If conditions (r1) and (r2) in $\S$ \ref{subsection: Courant red} hold, then by Theorem~\ref{thm: H0 BFV2} and \eqref{eq:PhiTheta}, the map $\Phi$ in \eqref{Map 0th cohomology and N(I)/I} defines an isomorphism
$$
(\oplus_kH_Q^{k,k}(\cA), \{\cdot,\cdot\}, [\Theta]) \cong (N(\cI)/\cI, \{\cdot,\cdot\}, \overline{\theta})
$$
of graded hamiltonian Poisson algebras of degree $-2$ and therefore an isomorphism of the corresponding Courant-Dorfman algebras:
$$
(H^{0,0}_Q(\cA) , H^{1,1}_Q(\cA)) \cong ((N(\cI)/\cI)_0, N(\cI)/\cI)_1).
$$
Since $(H^0_\cR, H^1_\cE) = (H^{0,0}_Q(\cA) , H^{1,1}_Q(\cA))$, by \eqref{eq:algreduc} we obtain an identification of Courant-Dorfman algebras
$$
(H^0_\cR, H^1_\cE) \cong (C^\infty(\mu^{-1}(0))^\g, \Gamma(K^\perp/K)^\g).
$$
Finally, following $\S$ \ref{subsection: Courant red}, when the hamiltonian action $(\varphi,\varrho,\mu)$ is regular (in particular, (r1) and (r2) are satisfied), we have a further identification of Courant-Dorfman algebras,
$$
(C^\infty(\mu^{-1}(0))^\g, \Gamma(K^\perp/K)^\g) \cong (C^\infty(M_{red}), \Gamma(E_{red})),
$$
which concludes the proof of Theorem~\ref{thm:main}.

\section{Graded geometric interpretation}\label{sec:interpret}

In the context of Theorem \ref{thm:main}, we may view  
 $\cR = C^\infty(M) \otimes \wedge \g^* \otimes \wedge \h$ as the algebra of functions on a $\mathbb Z$-manifold $\cN_0$,
and $\dR$ as a homological vector field  on $\cN_0$, so that 
$$
(\cN_0, \dR)
$$ 
is a dg manifold. 

We now explore this viewpoint: we first show that $(\cN_0,\dR)$ canonically gives rise to a bigraded hamiltonian Poisson algebra (in the sense of Def.~\ref{def:bigHPA}) on functions on the shifted cotangent bundle $T^*[2]T[1]\cN_0$, and then compare it with 
the BFV algebra $\cA_{\scriptscriptstyle{BFV}}$ in the exact case treated in $\S$ \ref{subsec:exact}

Although we are mostly concerned with the specific graded manifolds $\cN_0$ above, the discussion in $\S$ \ref{subsec:cotang} concerns general properties of $\bZ$-manifolds; we will follow coventions from \cite{Mehta's Thesis}, see also \cite{MJ,Fairon,KotovSalnikov,VoronovGradedGeometry}.

\subsection{The canonical bigraded  hamiltonian Poisson algebra of a dg manifold }\label{subsec:cotang}
Let $\cN_0$ be a $\bZ$-manifold and $\cR=C_{\mathrm{poly}}(\cN_0)$. We endow $\cN_0$  with a homological vector field $\dR$. 

Let $z^a$ be homogeneous local coordinates on $\cN_0$, and let
\[
z^a,\qquad \dot z^a
\]
be the induced coordinates on the shifted tangent bundle $T[1]\cN_0$, with
$|\dot z^a|=|z^a|+1.$

The {\em de Rham vector field} $d_{\mathrm{dR}}$ on $T[1]\cN_0$ is a homological vector field given locally by
\[
d_{\mathrm{dR}}
=
\dot z^a\frac{\partial}{\partial z^a},
\]
so that
$d_{\mathrm{dR}}z^a=\dot z^a$, and $
d_{\mathrm{dR}}\dot z^a=0$.

Let $X \in\mathfrak X(\cN_0):=\Der(\cR)$ be a homogeneous vector field of degree $|X|$, 
\[
X=X^a(z)\frac{\partial}{\partial z^a}.
\]
Then $X$ gives rise to two vector fields on $T[1]\cN_0$:

\begin{itemize}

\item The {\em contraction vector field} $\iota_X$ on $T[1]\cN_0$ is 
\[
\iota_X
=
X^a(z)\frac{\partial}{\partial\dot z^a}, \qquad |\iota_X| = |X|-1;
\]
It is characterized intrinsically by
\[
\iota_X(r)=0,
\qquad
\iota_X(d_{\mathrm{dR}}r)=X(r),
\]
for $r\in\cR$.

\item  The {\em Lie derivative} is the vector field
$
\cL_X:=[\iota_X,d_{\mathrm{dR}}]$;
i.e., 
\[
\cL_X
=
\iota_Xd_{\mathrm{dR}}
+
(-1)^{|X|}d_{\mathrm{dR}}\iota_X,  \qquad
|\cL_X|=|X|.
\]
Locally,
\[
\cL_X
=
X^a\frac{\partial}{\partial z^a}
+
(-1)^{|X|}
(d_{\mathrm{dR}}X^a)
\frac{\partial}{\partial\dot z^a},
\]
In particular,
\[
\cL_X(r)=X(r),
\qquad
\cL_X(d_{\mathrm{dR}}r)
=
(-1)^{|X|}
d_{\mathrm{dR}}(X(r)).
\]

\end{itemize}

The shifted cotangent bundle $T^*[2]T[1]\cN_0$ is endowed with a canonical Poisson bracket of degree $-2$. In canonical cotangent coordinates
$$
z^a, \quad \dot{z}^a, \quad p_{\dot{z}^a}, \quad p_{z^a},
$$
with $\m{\dot{z}^a}= \m{z^a}+1$, $\m{ p_{\dot{z}^a}}=-\m{z^a}+1$, $\m{p_{z^a}}=-\m{z^a}+2$, the canonical Poisson bracket satisfies
$$
\{p_{z^a}, z^b\}_{\mathrm{can}} =  \delta_{a}^b, \qquad \{p_{\dot{z}^a}, \dot{z}^b\}_{\mathrm{can}}=\delta_{a}^b,
$$
with all other brackets between distinct types of coordinates being zero. 

We define a second grading on $C_{\mathrm{poly}}(T^*[2]T[1]\cN_0)$, called {\em weight}, accounting for degree shifts on tangent and cotangent coordinates; it is determined by
\begin{equation}\label{eq:weight}
wght(z^a)=0, \quad wght(\dot{z}^a)=1, \quad wght(p_{\dot{z}^a})=1, \quad wght(p_{z^a})=2.
\end{equation}
We note that the weight grading is intrinsic, invariant by change of coordinates.

With respect to weight and the usual function degree, $\{\cdot,\cdot\}_{\mathrm{can}}$ is a Poisson structure on 
$$
C^{\bullet,\bullet}_{\mathrm{poly}}(T^*[2]T[1]\cN_0)
$$ 
of bidegree $(-2,-2)$. 

Let $\Theta_{\mathrm{dR}}$ and $Q_0$ be the fiber-linear hamiltonian functions for the lifts of the vector fields $d_{\mathrm{dR}}$ and $\cL_{\dR}$, characterized by
$$
\{\Theta_{\mathrm{dR}}, f\}_{\mathrm{can}} = d_{\mathrm{dR}} f, \quad \mbox{and} \quad
 \{Q_0, f\}_{\mathrm{can}} = \cL_{\dR} f,
$$
for $f\in C_{poly}(T[1]\cN_0)$ (naturally regarded as functions in $T^*[2]T[1]\cN_0$).

\begin{proposition}
$
(C^{\bullet,\bullet}_{\mathrm{poly}}(T^*[2]T[1]\cN_0), \{\cdot,\cdot\}_{\mathrm{can}}, \Theta_{\mathrm{dR}}+Q_0)
$
is a bigraded hamiltonian Poisson algebra, in the sense of Def.~\ref{def:bigHPA}. 
\end{proposition}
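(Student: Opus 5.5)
The plan is to check the bidegree requirements of Def.~\ref{def:bigHPA} and then the master equation, splitting $\{S,S\}=0$ into its three weight components. Write $\Theta:=\Theta_{\mathrm{dR}}$ and $Q:=Q_0$.

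\emph{Bidegrees.} In canonical coordinates, the fiber-linear lift of a vector field $V=V^a\partial_{z^a}+W^a\partial_{\dot z^a}$ on $T[1]\cN_0$ is $V^a p_{z^a}+W^a p_{\dot z^a}$, up to the standard sign conventions. For $d_{\mathrm{dR}}$ this gives
\[
\Theta=\dot z^a p_{z^a},
\]
of weight $1+2=3$ and function degree $(|z^a|+1)+(-|z^a|+2)=3$, so $\Theta\in\cA^{3,3}$. For $\cL_{\dR}$, write $\dR=X^a(z)\partial_{z^a}$ with $|X^a|=|z^a|+1$. Then
\[
Q=X^a p_{z^a}-(d_{\mathrm{dR}}X^a)\,p_{\dot z^a}.
\]
Both terms have weight $2$: the first is $0+2$, the second is $1+1$. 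Both have function degree $3$: the first is $(|z^a|+1)+(-|z^a|+2)$, and the second is $(|z^a|+2)+(-|z^a|+1)$. Hence $Q\in\cA^{2,3}$. The bracket has bidegree $(-2,-2)$, as already noted in the text.

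\emph{Master equation.} By the bidegree argument in \S\ref{subsec:bigraded}, $\{S,S\}=0$ is equivalent to $\{\Theta,\Theta\}=\{Q,Q\}=\{Q,\Theta\}=0$. The key tool is the standard fact that the map sending a vector field $V$ on $T[1]\cN_0$ to its fiber-linear hamiltonian $h_V$ is a graded Lie algebra morphism into the Poisson algebra with the bracket shifted by $2$:
\[
\{h_V,h_W\}_{\mathrm{can}}=\pm h_{[V,W]}.
\]
To prove it, compare both sides on $f\in C_{\mathrm{poly}}(T[1]\cN_0)$ using the Jacobi identity. Both sides are fiber-linear, so they are determined by their action on base functions.

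With this fact, the three brackets reduce to identities for vector fields. First, $\{\Theta,\Theta\}=\pm h_{[d_{\mathrm{dR}},d_{\mathrm{dR}}]}=0$, since $d_{\mathrm{dR}}^2=0$. Second, $\{Q,\Theta\}=\pm h_{[\cL_{\dR},d_{\mathrm{dR}}]}$, and $[\cL_X,d_{\mathrm{dR}}]=[[\iota_X,d_{\mathrm{dR}}],d_{\mathrm{dR}}]=0$ by the graded Jacobi identity and $[d_{\mathrm{dR}},d_{\mathrm{dR}}]=0$. Third, $\{Q,Q\}=\pm h_{[\cL_{\dR},\cL_{\dR}]}=\pm h_{\cL_{[\dR,\dR]}}$, using the Cartan identity $[\cL_X,\cL_Y]=\cL_{[X,Y]}$. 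This vanishes because $\dR$ is homological, so $[\dR,\dR]=2\dR^2=0$.

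\emph{Expected main obstacle.} The only real work is the pair of Cartan identities on $T[1]\cN_0$ with correct signs, namely $[\cL_X,d_{\mathrm{dR}}]=0$ and $[\cL_X,\cL_Y]=\cL_{[X,Y]}$. I would first derive $[\iota_X,\cL_Y]=\iota_{[X,Y]}$ by checking it on the generators $z^a$ and $d_{\mathrm{dR}}z^a$, using the intrinsic characterizations of $\iota_X$ and $\cL_X$. The second identity then follows from the first by the Jacobi identity. As a fallback, since all objects are local derivations, one can verify the identities directly on the generators $z^a,\dot z^a$, where each side is determined.
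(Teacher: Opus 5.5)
Your proposal is correct and follows essentially the same route as the paper. You read off $\Theta_{\mathrm{dR}}\in C^{3,3}_{\mathrm{poly}}$ and $Q_0\in C^{2,3}_{\mathrm{poly}}$ from the bidegrees of $d_{\mathrm{dR}}$ and $\cL_{\dR}$. You then use $\{\ell_V,\ell_W\}_{\mathrm{can}}=\ell_{[V,W]}$ for fiber-linear hamiltonians to reduce the three components of the master equation to $d_{\mathrm{dR}}^2=0$, $[\cL_{\dR},d_{\mathrm{dR}}]=0$ and $\dR^2=0$. Your extra details (the explicit coordinate forms of the charges, and the Cartan identities on $T[1]\cN_0$, including $[\cL_{\dR},\cL_{\dR}]=\cL_{[\dR,\dR]}$) only fill in steps the paper leaves implicit.
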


\begin{proof}
The de Rham vector field $d_{\mathrm{dR}}$ has weight $1$ and
function degree $1$, while $\cL_{\dR}$ has weight $0$ and
function degree $1$. Hence their cotangent hamiltonians satisfy
$$
\Theta_{\mathrm{dR}} \in C^{3,3}_{\mathrm{poly}}(T^*[2]T[1]\cN_0), \qquad {Q}_0 \in C^{2,3}_{\mathrm{poly}}(T^*[2]T[1]\cN_0).
$$ 

For vector fields $V,W$ on $T[1]\cN_0$, recall that their fiber-linear
hamiltonians $\ell_V$, $\ell_W$ satisfy
$\{\ell_V,\ell_W\}_{\mathrm{can}}\allowbreak=\ell_{[V,W]}$.
Therefore
\[
\{\Theta_{\mathrm{dR}},\Theta_{\mathrm{dR}}\}_{\mathrm{can}}=0,
\qquad
\{Q_{0},Q_{0}\}_{\mathrm{can}}=0, \qquad \{Q_{0},\Theta_{\mathrm{dR}}\}_{\mathrm{can}}=0
\]
as a consequence of $d_{\mathrm{dR}}^2=0$, $\dR^2=0$, and $[\cL_{\dR},d_{\mathrm{dR}}]=0$.
\end{proof}

The graded algebra of weight $0$ elements is $C_{\mathrm{poly}}(\cN_0)$, and we denote the module of weight $1$ by
$$
\bT_{\cN_0}:= C^{1,\bullet}_{\mathrm{poly}}(T^*[2]T[1]\cN_0).
$$
It follows from Theorem~\ref{thm: derived dg CD} that $(C_{\mathrm{poly}}(\cN_0), \bT_{\cN_0})$
has an induced structure of dg Courant-Dorfman algebra of degree $-1$, that we now describe.

For a vector field
$X\in\mathfrak X(\cN_0)$, denote by
$h_X$ the fiber-linear hamiltonian of the lift of the contraction vector field
$\iota_X$ on $T[1]\cN_0$, characterized by
\[
\{h_X,f\}_{\mathrm{can}} = \iota_Xf,
\]
for $f\in C(T[1]\cN_0)$. Note that $\m{h_X}=|X|+1$. In coordinates, for $X=X^a(z)\frac{\partial}{\partial z^a}$,
$$
h_X = X^ a(z) p_{\dot{z}^a}.
$$

We can see from \eqref{eq:weight} that any element in $\bT_{\cN_0}$ can be uniquely written in the form
$$
h_X + \widetilde{\alpha},
$$
where $\widetilde{\alpha} \in C_{\mathrm{poly}}(T[1]\cN_0)_{lin}$ is a fiber-linear function on $T[1]\cN_0$, locally of the form $A_a(z)\dot{z}^a$. So we have a canonical decomposition
$$
\bT_{\cN_0} = \{ h_X,\; X\in \mathfrak{X}(\cN_0) \}\oplus C_{\mathrm{poly}}(T[1]\cN_0)_{lin}.
$$

With respect to this decomposition, the dg Courant-Dorfman structure of degree $-1$ on $(C_{\mathrm{poly}}(\cN_0),\bT_{\cN_0})$ from Theorem~\ref{thm: derived dg CD} takes the following form:
For
\[
e_1=h_X+\widetilde\alpha,
\qquad
e_2=h_Y+\widetilde\beta,
\]
so that $\m{e_1}= \m{X}+1$, $\m{e_2}=\m{Y}+1$,
the internal differential on $\bT_{\cN_0}$ is
$$
d_{\bT_{\cN_0}}(e_1) = h_{[\dR,X]} + \cL_{\dR}\widetilde\alpha,
$$
the pairing is 
$$\SP{e_1,e_2} = \iota_X\widetilde\beta + (-1)^{|e_1||e_2|+1}
\iota_Y\widetilde\alpha,
$$
the derivation and corresponding anchor are
$$
\partial r = d_{\mathrm{dR}}r, \qquad \rho(e_1) = X, 
$$
and the bracket is given by
$$
\Cour{e_1,e_2} = h_{[X,Y]} + \cL_X\widetilde\beta - (-1)^{(|e_1|-1)(|e_2|-1)} \iota_Yd_{\mathrm{dR}}\widetilde\alpha.
$$
This is the ``geometric'' standard dg Courant-Dorfman algebra canonically associated with $(\cN_0,\dR)$.  

\begin{remark}
The relation between the standard dg Courant-Dorfman algebra $(C(\cN_0), \bT_{\cN_0})$ and the
cotangent bundle $T^*[2]T[1]\cN_0$, equipped with its canonical Poisson bracket and charge $\Theta_{\mathrm{dR}}+Q_0$ is the dg extension of the correspondence between standard Courant algebroids $\bT M$ and symplectic $\bN$-manifolds $T^*[2]T[1]M$ equipped with the canonical de Rham charge $\theta_M$, see \cite{RoytenbergThesis,Roytenberg}.
\hfill $\diamond$
\end{remark}

There is a direct relation with the  ``algebraic'' standard dg Courant-Dorfman algebra 
$$
(\cR, \bT_\cR[-1])
$$ 
of degree $-1$ described in Example~\ref{ex:shifted}, for $\cR=C_{\mathrm{poly}}(\cN_0)$,
in terms of  the shifted Cartan operators $\bm{i}_X$, $\bm{\Lie}_X$ and $\bm{d}$ from $\S$ \ref{subsection:shifts}, for $m=1$. We denote by $s: \Der(\cR)\to \Der(\cR)[-1]$ the operator that shifts degree by $1$.

We have an isomorphism of $\cR$-modules 
$$
\jmath: \Omega^1_\cR[-1] \to  C_{\mathrm{poly}}(T[1]\cN_0)_{lin},
$$
uniquely characterized by the condition
$$
\iota_{X_0}\jmath(\alpha) = \bm{i}_X\alpha
$$
where $X=s(X_0)$. This defining relation for $\jmath$, together with Cartan identities, implies that 
$$
\jmath(\bm{d}r) = d_{\mathrm{dR}}r, \qquad \cL_{X_0}\jmath(\alpha) = \jmath (\bm{\Lie}_X\alpha), \qquad \iota_{Y_0}d_{\mathrm{dR}}\jmath(\alpha) = \jmath(\bm i_Y\bm d\alpha).
$$
From the middle identity we see that
\[
\cL_{\dR}\jmath(\alpha)=\jmath(d_\Omega\alpha).
\]
With these identities, it directly follows that the map
\begin{equation}\label{eq:geomalgstd}
 \bT_\cR[-1]\to \bT_{\cN_0}, \qquad (X,\alpha)\mapsto h_{X_0}+ \jmath(\alpha)
\end{equation}
intertwines the pairing, derivation, anchor, bracket, and internal
differential, thereby establishing an isomorphism of dg Courant-Dorfman algebras of degree $-1$.

\subsection{Relation with $\cA_{\scriptscriptstyle{BFV}}$ in the exact case}\label{subsec:compare}
Back to the setup of Theorem~\ref{thm:main}, we have a hamiltonian action of a dgla $\ov\g = \h[2]\oplus \a[1]\oplus \g$ on a Courant algebroid $E\to M$, and consider the corresponding BFV algebra $\cA_{\scriptscriptstyle{BFV}}$ as a bigraded hamiltonian Poisson algebra of bidegree $(-2,-2)$ with respect to weight and function degrees, as in $\S$ \ref{subsec:CD-BFV}.

We regard $\cR = C^\infty(M) \otimes \wedge \g^* \otimes \wedge \h$ as the algebra of functions on a $\mathbb Z$-manifold $\cN_0$,
\begin{equation*}\label{eq:Rfun}
C_{\mathrm{poly}}(\cN_0)=\cR,
\end{equation*}
and $\dR$ as a  homological vector field on $\cN_0$.
The BFV algebra is
$$
\cA_{\scriptscriptstyle{BFV}} = \Fun\otimes S({\ov\g}^*[-1]) \otimes S(\ov\g[-1]),
$$
with degrees of generators given by
\[
\begin{array}{c|cccccc}
 & \h & \a & \g & \g^* & \a^* & \h^*\\
\hline
\text{degree}
&-1&0&1&1&2&3.
\end{array}
\]
It is equipped with the BFV bracket and the charge $\Theta + Q$.

From a graded-geometric perspective, $\cA_{\scriptscriptstyle{BFV}}$ can be seen as the graded Poisson algebra of (fiberwise polynomial) functions on 
$\cM \times T^*[2](\overline{\g}[1])$,
$$
\cA_{\scriptscriptstyle{BFV}} = C_{\mathrm{poly}}(\cM \times T^*[2](\overline{\g}[1]))
$$
where $\overline{\g}[1]$ is viewed as a linear graded manifold with $C(\overline{\g}[1])= S({\ov\g}^*[-1])$.

Let us assume that
\begin{itemize}
\item[(a1)] the dgla $\ov\g$ is determined by $(\g,\h,\Upsilon)$ as in $\S$ \ref{subsec:exact};
\item[(a2)] the Courant algebroid $E$ is of the form $\bT_\eta M$ as in Example~\ref{ex:exactCA}.
\end{itemize}

By condition (a1), $\a=\h\oplus \g$. If we set $\mathcal{W}:= \g[1]\oplus \h^*[-1]$, then its algebra of functions is
$$
C(\mathcal{W}) = \wedge \g^* \otimes \wedge \h, 
$$
and it follows that
\begin{equation}\label{eq:N0prod}
\cN_0 = M \times \mathcal{W}.
\end{equation}

By condition (a2), the symplectic $\bN$-manifold corresponding to $E$ is $\cM= T^*[2]T[1]M$, and therefore
\begin{equation}\label{eq:shiftedcot}
\cA_{\scriptscriptstyle{BFV}} = C_{\mathrm{poly}}(T^*[2]T[1]M \times T^*[2] (\overline{\g}[1])).
\end{equation}

 For $\xi\in\g^*$ and $w\in\h$, let 
$$
c_\xi, \qquad b_w
$$
denote the corresponding linear functions on $\mathcal{W}$. Their degrees are
$|c_\xi|=1$,
and $|b_w|=-1$.

Let
\[
\dot c_\xi,\qquad \dot b_w
\]
denote the corresponding fiber-linear functions on $T[1]W$, so that
$|\dot c_\xi|=2,$ and $|\dot b_w|=0$.

For $u\in\g$ and $\lambda\in\h^*$, let
\[
p_{c,u},\quad p_{\dot c,u},
\quad
p_{b,\lambda},\quad p_{\dot b,\lambda}
\]
be functions on $T^*[2]T[1]W$, with respective degrees $1$, $0$, $3$, $2$, characterized by
\begin{equation}\label{eq:intrinsic-cotangent-pairings}
\begin{aligned}
\{p_{c,u},c_\xi\}_{\mathrm{can}}
&=\xi(u),
&
\{p_{\dot c,u},\dot c_\xi\}_{\mathrm{can}}
&=\xi(u),
\\
\{p_{b,\lambda},b_w\}_{\mathrm{can}}
&=\lambda(w),
&
\{p_{\dot b,\lambda},\dot b_w\}_{\mathrm{can}}
&=\lambda(w),
\end{aligned}
\end{equation}
with all brackets between distinct coordinate--momentum pairs equal
to zero. 
In other words, these functions are ``cotangent momenta'' corresponding to $c_\xi$, $b_w$, $\dot{c}_\xi$ and $\dot{b}_w$.

We obtain a linear symplectic isomorphism
\begin{equation}\label{eq:linsymp}
 T^*[2](\ov\g[1])  \stackrel{\sim}{\longrightarrow}  T^*[2]T[1]\mathcal{W},
\end{equation}
by the following correspondence of generators:
\begin{equation}\label{eq:gencorrespond}
\begin{aligned}
c_\xi&\longleftrightarrow \xi\in \g^*,
&
p_{c,u}&\longleftrightarrow u \in \g,
\\
\dot c_\xi&\longleftrightarrow (0,\xi)\in \a^*=\h^*\oplus \g^*,
&
p_{\dot c, u}&\longleftrightarrow (0,u)\in \a=\h\oplus \g,
\\
b_w&\longleftrightarrow w \in \h,
&
p_{b, \lambda}&\longleftrightarrow \lambda\in \h^*,
\\
\dot b_w&\longleftrightarrow-(w,0)\in \a=\h\oplus \g,
&
p_{\dot b, \lambda}&\longleftrightarrow (\lambda,0)\in \a^*=\h^*\oplus \g^*.
\end{aligned}
\end{equation}
(The sign in the fourth line is forced by the fact that
$\{(\lambda,0),(w,0)\}_{\scriptscriptstyle{\mathrm{BFV}}} = -\lambda(w)$.)

By \eqref{eq:N0prod}, we obtain an induced symplectic isomorphism
$$
T^*[2]T[1]M \times T^*[2](\ov\g[1]) \cong T^*[2]T[1]M \times T^*[2]T[1]\mathcal{W} = T^*[2]T[1]\cN_0.
$$

At the level of functions, by \eqref{eq:shiftedcot}, we have
an isomorphism of graded Poisson algebras
\begin{equation}\label{eq:Psi}
\Psi:
\cA_{\scriptscriptstyle{BFV}} \stackrel{\sim}{\longrightarrow} C_{\mathrm{poly}}\bigl(T^*[2]T[1]\cN_0\bigr)
\end{equation}
Since $\Psi$ also preserves weights, it is an isomorphism of bigraded Poisson algebras.

We now compare the respective charges. 

We assume henceforth that we have a {\em standard hamiltonian action} in the sense of  $\S$ \ref{subsec:exact}: $\Upsilon=0$ (the dgla corresponds to a hemisemidirect product), $\eta=0$ ($E$ is the standard Courant algebroid) and the hamiltonian action is defined by $(\psi, \mu, \zeta)$ with $\zeta=0$.

\begin{proposition}
For a standard hamiltonian action, the isomorphism $\Psi$ satisfies
$$
 \Psi(\Theta) = \Theta_{\mathrm{dR}} , \quad \mbox{and} \quad \Psi(Q) = Q_0,
$$ 
i.e., $\Psi$ is an isomorphism of bigraded hamiltonian Poisson algebras.
\end{proposition}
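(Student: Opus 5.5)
Since $\Psi$ is already an isomorphism of bigraded Poisson algebras, I only need to check the two identities on charges. Both sides of each identity are hamiltonian functions, so it suffices to compare them on a generating set. Concretely, an element of weight $2$ or $3$ and degree $3$ in these polynomial algebras is determined by its brackets with the coordinate functions of weights $0$ and $1$. These are $x^i,\dot x^i, c_\xi, b_w,\dot c_\xi,\dot b_w$ together with the momenta $p_{\dot x}, p_{\dot c,u},p_{\dot b,\lambda}$. Any discrepancy $D$ would be a function whose hamiltonian derivation kills all coordinates of weight $\le 1$. By nondegeneracy of the canonical bracket, $D$ would then have to be constant in the dual momenta, which is impossible for degree reasons.

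So the plan is to use the explicit formulas. On the $T^*[2]T[1]M$ factor with $\eta=0$, the Courant charge is $\theta=\theta_M=\dot x^i p_{x^i}$, the hamiltonian of $d_{\mathrm{dR}}$ on $T[1]M$. For $\Upsilon=0$, the differential $\delta$ is the inclusion $\h\hookrightarrow\a$ followed by the projection $\a\to\g$, so \eqref{eq: theta11 coordinates} becomes $\theta^{1,1}=\sum_i w_i^*(w^i,0)+\sum_k (0,u^*_k)\,u^k$ up to signs. Under \eqref{eq:gencorrespond} this maps to $-\sum p_{b,\lambda_i}\dot b_{w^i}+\sum \dot c_{\xi_k}p_{c,u_k}$. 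This is exactly the hamiltonian of $\dot b\,\partial_b+\dot c\,\partial_c$, the de Rham field on $T[1]\mathcal W$; the sign in the fourth line of \eqref{eq:gencorrespond} is what makes the signs match. Adding the two pieces gives $\Psi(\Theta)=\Theta_{\mathrm{dR}}$, because $d_{\mathrm{dR}}$ on $T[1](M\times\mathcal W)$ is the sum of the de Rham fields of the two factors.

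For $Q$, I would compute $\{\Psi(Q),\cdot\}$ on the weight $0$ coordinates $x^i,c_\xi,b_w$ and on the weight $1$ coordinates $\dot x^i,\dot c_\xi,\dot b_w$. On weight $0$ functions, $d_{\mathrm{BRST}}=\dR$ by \eqref{eq: weight zero brst diff}. Since $\cL_{\dR}$ restricts to $\dR$ on $\cR$, this agrees with $Q_0$. On weight $1$ functions of the form $d_{\mathrm{dR}}r$, I would use $\{Q,\Theta\}=0$ (Prop.~\ref{prop: Courant cocycle dgla action}) and the Jacobi identity: $\{\Psi Q,\{\Psi\Theta,r\}\}=-\{\Psi\Theta,\{\Psi Q,r\}\}$, that is, $-d_{\mathrm{dR}}\dR r$. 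This matches $\cL_{\dR}d_{\mathrm{dR}}r=(-1)^{1}d_{\mathrm{dR}}\dR r$. The weight $1$ momenta $p_{\dot x},p_{\dot c},p_{\dot b}$ correspond to the contractions $h_X$, and on these both sides give $h_{[\dR,X]}$. For $Q_0$ this is $[\cL_{\dR},\iota_X]=\iota_{[\dR,X]}$. For $\Psi(Q)$ it follows from the explicit $\dE$ formulas in Theorem~\ref{thm:main}, specialised via Prop.~\ref{prop:hamact}: there $\varrho(w,u)=(\psi(u),d\mu_w)$, and $\varphi(u)$ is the Dorfman action of $\psi(u)$. As a consistency check, Corollary~\ref{cor:stdaction} and Theorem~\ref{thm:exact} already identify the weight $\le1$ structures under the map $\kappa$ with the standard ones, and $\kappa$ agrees with $\Psi$ restricted to weight $1$ composed with \eqref{eq:geomalgstd}.

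The main obstacle is the sign bookkeeping in this last step. It requires matching each term of \eqref{eq: Q10 comp BRST}–\eqref{eq: Q21 comp BRST}, namely $\mu^*(w^i)w_i^*$, $-\varrho(v^j)v_j^*$, $\varphi(u^k)u_k^*$ and the hemisemidirect structure constants, with the terms of $Q_0$ in coordinates. Those terms are $\psi(u)\cdot\partial_x$ and its tangent lift, $\mu_w$ paired with $p_b$, and the CE terms. I would avoid doing this term by term. Instead I would rely on the reduction above, which shows it suffices to check brackets with $h_X$. Those brackets reduce to the equality of the differentials $\dE$ and $d_{\bT,F}$ with $F=0$, and that equality is the content of Theorem~\ref{thm:exact} with $F=0$.
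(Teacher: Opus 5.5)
Your computation of $\Psi(\Theta)$ follows the paper. So do your first two steps for $Q$: weight~$0$ via $\dR=\cL_{\dR}|_{\cR}$, and the coordinates $\dot z^a=d_{\mathrm{dR}}z^a$ via $\{Q,\Theta\}=0$ and Jacobi. The routes differ only in the closing step.

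\textbf{The paper's closing step.} Once it has $\{\Psi(Q),f\}=\cL_{\dR}f$ for all $f\in C_{\mathrm{poly}}(T[1]\cN_0)$, the paper inspects the explicit charge $Q^{1,0}+Q^{2,1}$. It sees that $\Psi(Q)$ is linear in the momenta and vanishes on the zero section, and concludes by uniqueness of the fiber-linear hamiltonian of a cotangent lift.

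\textbf{Your closing step.} You also control the brackets with the weight-$1$ momenta $p_{\dot z^a}=h_{\partial_{z^a}}$. For this you import Corollary~\ref{cor:stdaction} ($\kappa\circ\dE\circ\kappa^{-1}=d_{\bT}$) and the intertwining property of \eqref{eq:geomalgstd}. You then use the correct observation that a weight-$2$ function whose brackets with all $z^a,\dot z^a,p_{\dot z^a}$ vanish is independent of $p_z,p_{\dot z},\dot z$. It therefore lies in $\cR$, which has weight~$0$, so it is zero.

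This is valid and not circular, since Theorem~\ref{thm:exact} does not use the proposition. It also makes your Jacobi step redundant: the two intertwinings already give $\{\Psi(Q),\Psi(e)\}=\{Q_0,\Psi(e)\}$ for every $e\in\cE$.

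\textbf{Trade-off.} The paper's argument stays inside the BFV picture but needs a look at the shape of $Q$. That is where $\zeta=0$ and $\Upsilon=0$ enter; otherwise momentum-free terms such as $\zeta(u)_i\dot x^i\dot c$ or $c\,\dot c\,\dot b$ appear. Your argument outsources that computation to Theorem~\ref{thm:exact}, whose own proof is only sketched. In exchange, it shows that in the general setting of that theorem $\Psi(Q)-Q_0$ is the weight-$2$ function on $T[1]\cN_0$ determined by $F$. The proposition is then visibly the case $F=0$.

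\textbf{Points to fix.}

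First, the sign of $\Psi(\theta^{1,1})$ is wrong as displayed. By \eqref{eq: theta11 coordinates} with $\delta_\h(w_i)=(w_i,0)$, one has $\theta^{1,1}=-w^{*i}(w_i,0)+(0,u^{*l})\,u_l$; the minus sign is not optional. The fourth line of \eqref{eq:gencorrespond} turns the first term into $+p_{b^i}\dot b^i=\dot b^i p_{b^i}$, whose bracket with $b^i$ is $\dot b^i$. Your expression $-\sum p_b\dot b$ generates $-\dot b\,\partial_b$, so as written your two pieces do not add up to $\Theta_{\mathrm{dR}}$.

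Second, it is the weight, not the function degree, that forces the discrepancy $D$ to vanish. Functions on $\cN_0$ alone can have degree~$3$.

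Third, let $j$ denote the map \eqref{eq:geomalgstd}. The identity $\Psi|_{\cE}=j\circ\kappa$ should be checked directly on generators rather than quoted from the remark after the proposition, whose first half depends on the proposition. The check is immediate. For example, $(w,0)\mapsto-\dot b_w$ under both maps, since $\jmath(\widehat w)=\dot b_w$; similarly $(0,\xi)\mapsto\dot c_\xi$ and $(\lambda,0)\mapsto p_{\dot b,\lambda}$.
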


\begin{proof} For the standard Courant algebroid, the corresponding Courant function $\theta_M$ is the hamiltonian of the de Rham vector field on $T[1]M$; locally,
$$
\theta_M = \dot{x}^i p_{x^i}.
$$
So, by \eqref{eq: courant cocyle dgla case}, $\Psi(\Theta) =\theta_M + \Psi(\theta^{1,1})$. We will describe $\Psi(\theta^{1,1})$ .

Choose bases $\{u_l\}$ of $\g$ and $\{w_i\}$ of $\h$. Denote the elements of the induced basis on $\a=\h\oplus \g$ by
\[
v_{\h,i}:=(w_i,0),
\qquad
v_{\g, l}:=(0,u_l).
\]
Denote the dual bases by $\{ u^{* l} \}$, $\{ w^{*i} \}$, and $\{ v^{*i}_{\h}, v^{*l}_{\g} \}$.

Since 
\[
\delta_{\h}(w_i)=v_{\h,i},
\qquad
\delta_{\a}(v_{\h,i})=0,
\qquad
\delta_{\a}(v_{\g,l})=u_l,
\]
it follows that 
\begin{equation}\label{eq:theta11-split}
\theta^{1,1}
=
- w^{*i}v_{\h,i}
+
v_{\g}^{*l}u_l.
\end{equation}

Using the correspondence \eqref{eq:gencorrespond}, we have
$$
c^l=\Psi(u^{*l}), \quad \dot{c}^l=\Psi(v^{*l}_\g), \quad  b^i=\Psi(w_i), \quad \dot{b}^i = - \Psi(v_{\h,i}).
$$
Using the  BFV bracket relations
\[
\{u_l,u^{*m}\}_{\scriptscriptstyle{\mathrm{BFV}}}
=
\delta_l^m,
\qquad
\{w^{*i},w_j\}_{\scriptscriptstyle{\mathrm{BFV}}}
=
\delta^i_j,
\]
we also see that
$$
p_{c^l} =  \Psi(u_l), \qquad p_{b^j}=\Psi(w^{*j}).
$$
It follows that 
$$
\Psi(\theta^{1,1}) = \dot c^l\,p_{c^l} +  \dot b^j\,p_{b^j}
$$
and therefore, locally, 
$$
\Psi(\Theta) = \dot{x}^i p_{x^i} +  \dot c^l\,p_{c^l} +  \dot b^j\,p_{b^j} = \Theta_{\mathrm{dR}}.
$$

We now consider $\Psi(Q)$.
For $r\in\cR= C(\cN_0)$,
$$
\{\Psi(Q),r\}_{\mathrm{can}} =
\Psi \bigl(\{Q,r\}_{\scriptscriptstyle{\mathrm{BFV}}}\bigr) =
d_{\cR}r = \cL_{\dR}(r).
$$
(Here we use that $\Psi(r)=r$, for $\cR=C(\cN_0)$ viewed as the weight-zero subalgebra in $\cA_{\scriptscriptstyle{BFV}}$ and as functions on $T^*[2]T[1]\cN_0$ via the natural projection).
Now, since
\[
{d}_{\mathrm{dR}}r
=
\{\Theta_{\mathrm{dR}},r\}_{\mathrm{can}},
\]
and $\{\Psi(Q),\Theta_{\mathrm{dR}}\}_{\mathrm{can}}= \{\Psi(Q),\Psi(\Theta)\}_{\mathrm{can}}=0$, by the graded Jacobi identity we obtain 
$$
\{\Psi(Q), {d}_{\mathrm{dR}}r\}_{\mathrm{can}}
=
-\{\Theta_{\mathrm{dR}},{d}_{\cR}r\}_{\mathrm{can}}
=
- {d}_{\mathrm{dR}}(d_{\cR}r)
=
\cL_{\dR}({d}_{\mathrm{dR}}r).
$$
It follows that
$$
\{\Psi(Q),f\}=\cL_{\dR}f
$$
for all functions $f$ on $T[1]\cN_0$.

We can also use the local expression for $Q= Q^{1,0}+Q^{2,1}$ to verify that $\Psi(Q)$ 
 is fiber-linear in the cotangent momenta and vanishes on the zero section. Since its hamiltonian vector field restricts to $\cL_{\dR}$ on $T[1]\cN_0$, 
 it follows from the uniqueness of the fiber-linear hamiltonian of a cotangent lift
 that $\Psi(Q)=Q_0$.
\end{proof}



\begin{remark} 
The isomorphism $\Psi: \cA_{\scriptscriptstyle{BFV}} \stackrel{\sim}{\longrightarrow} C_{{poly}}\bigl(T^*[2]T[1]\cN_0\bigr)$ of bigraded hamiltonian Poisson algebras restricts to an isomorphism of the corresponding dg Courant-Dorfman algebras defined by  weight $1$ $\cR$-modules,
$$
\Psi_1: \cE\stackrel{\sim}{\longrightarrow} \bT_{\cN_0}.
$$
As seen in $\S$ \ref{subsec:exact},  we have an isomorphism $\kappa: \cE\to \bT_\cR[-1]$, see \eqref{eq:kappa}, to the standard dg Courant-Dorfman algebra of degree $-1$. The composition $\Psi_1\circ\kappa^{-1}: \bT_\cR[-1] \to \bT_{\cN_0}$ coincides with the identification \eqref{eq:geomalgstd}.

\hfill $\diamond$
\end{remark}

\begin{example}[Relation with the Kalkman model]

Consider the special case of (a1) where $\h=0$, so that $\a=\g$ and the hamiltonian action on $\bT M$ is determined by a
Lie algebra action $\psi:\g\longrightarrow\mathfrak X(M)$.
Let
\[
A:=\g\ltimes M
\]
be the corresponding action Lie algebroid. 

In this case $\cN_0=A[1]=M\times\g[1]$, and
\[
\cR = C_{\mathrm{poly}}(\cN_0) = C^\infty(M)\otimes\wedge\g^*.
\]
The differential $\dR$ is the Lie algebroid differential of $A$, a homological vector field
$\cQ_A$ on $\cN_0$.

Choose a basis $\{e_l\}$ of $\g$, with dual degree $1$ coordinates
$\{c^l\}$, and write
\[
[e_m,e_n]=C^l_{mn}e_l,
\qquad
\psi_l:=\psi(e_l).
\]
In local coordinates $x^i$ on $M$, 
\[
\cQ_A = c^l\psi_l^i(x)\frac{\partial}{\partial x^i}
- \frac12C^l_{mn}c^mc^n \frac{\partial}{\partial c^l}.
\]

On $T[1]\cN_0$, set
\[
\xi^i:=d_{\mathrm{dR}}x^i,
\qquad
\mu^l:=d_{\mathrm{dR}}c^l.
\]
Their degrees are $|x^i|=0$, $|c^l|=1$, 
$|\xi^i|=1$, $|\mu^l|=2$.
It follows that
\begin{equation}\label{eq:Weil-action-algebroid}
C_{\mathrm{poly}}(T[1]\cN_0)
\cong
\Omega(M)\otimes\wedge\g^*\otimes S\g^*
=
\Omega(M)\otimes W(\g),
\end{equation}
which is the Weil algebra of the action Lie algebroid
$A=\g\ltimes M$ \cite{AbadCrainic, Mehta's Thesis}; here $W(\g)$ is the usual Weil algebra of $\g$. 

The de Rham vector field on $T[1]\cN_0$ is characterized by
\[
d_{\mathrm{dR}}x^i=\xi^i,
\qquad
d_{\mathrm{dR}}c^l=\mu^l,
\qquad
d_{\mathrm{dR}}\xi^i=0,
\qquad
d_{\mathrm{dR}}\mu^l=0,
\]
and recall that the tangent lift $\cL_{\cQ_A}$ is determined by
$\cL_{\cQ_A}(r)=\cQ_A(r)$, $[\cL_{\cQ_A},d_{\mathrm{dR}}]=0$.
In coordinates, we have
\begin{align*}
\cL_{\cQ_A}x^i &= c^l\psi_l^i,
& \cL_{\cQ_A}c^l &=
-\frac12C^l_{mn}c^mc^n,
\\
\cL_{\cQ_A}\xi^i &= -\mu^l\psi_l^i + c^l\frac{\partial\psi_l^i}{\partial x^j}\xi^j, &
\cL_{\cQ_A}\mu^l &= -C^l_{mn}c^m\mu^n.
\end{align*}

Consider the homological vector field
\[
d_{\mathrm K} := d_{\mathrm{dR}}+\cL_{\cQ_A}.
\]
Its action on generators is
\begin{align*}
d_{\mathrm K}x^i &= \xi^i+c^l\psi_l^i, &
d_{\mathrm K}c^l &= \mu^l-\frac12C^l_{mn}c^mc^n, \\
d_{\mathrm K}\xi^i &= -\mu^l\psi_l^i +c^l\frac{\partial\psi_l^i}{\partial x^j}\xi^j, &
d_{\mathrm K}\mu^l &= -C^l_{mn}c^m\mu^n.
\end{align*}

Under the identification \eqref{eq:Weil-action-algebroid}, this is the
Kalkman differential \cite{Kalkman}
\begin{equation}\label{eq:Kalkman-differential}
d_{\mathrm K} := d_M+d_W + c^l\cL_{\psi_l} - \mu^l\iota_{\psi_l},
\end{equation}
where the Weil differential is
\[
d_Wc^l
=
\mu^l-\frac12C^l_{mn}c^mc^n,
\qquad
d_W\mu^l
=
-C^l_{mn}c^m\mu^n.
\]

Thus
\[
\left(
C_{\mathrm{poly}}(T[1]\cN_0),
d_{\mathrm{dR}}+\cL_{\cQ_A}
\right)
\cong
\left(
\Omega(M)\otimes W(\g),
d_{\mathrm K}
\right).
\]
The basic subcomplex of this $\g$-differential graded algebra is the
Kalkman BRST model for equivariant cohomology, which is isomorphic to the Cartan model.

Hence, in our homological model for the reduction of the standard Courant algebroid $\bT M$, the extended BRST charge is obtanied by a cotangent lift of Kalkman's differential.
\hfill $\diamond$
\end{example}

\appendix

\section{Dg Courant-Dorfman algebras on $\Der(\cR)\oplus \Omega^1_\cR$}\label{app:twisted}

We will describe  examples of dg Courant-Dorfman algebras naturally associated with commutative dg algebras, analogous to the standard Courant algebroid of a manifold. For that we briefly review the Cartan calculus in this setting.

\subsection{Dg Cartan calculus}\label{subsec:dgcartan}
Let $\cR$ be a commutative graded algebra. Let $\Der(\cR)=\oplus_{k\in \mathbb{Z}}\Der^k(\cR)$ be the graded $\cR$-module of derivations of $\cR$. The graded commutator of derivations,
$$
[X,Y]= X\circ Y - (-1)^{\m{X}\m{Y}}Y\circ X,
$$
is a graded Lie bracket on $\Der(\cR)$
satisfying
\begin{equation}\label{eq:leibder}
[X,rY]= X(r) Y + (-1)^{\m{r}\m{X}} r[X,Y]
\end{equation}
for $r\in \cR$.

We consider the graded $\cR$-module of graded $k$-forms
$$
\Omega^k_\cR:= \mathrm{Hom}_{\cR}(\wedge^k \Der(\cR),\cR).
$$
We write 
\[ (\Omega^k_\cR)^\ell = \Omega_\cR^{k,\ell} 
\] 
for the space of $k$-forms of internal degree $\ell$. Thus $\alpha\in\Omega_\cR^{k,\ell}$ satisfies 
\[ 
\left| \alpha(X_1,\ldots,X_k) \right| = \ell+|X_1|+\cdots+|X_k|. 
\] 
We refer to $\ell$ as the \emph{internal degree} of $\alpha$.

For our purposes, it will suffice to model differential forms as duals of derivations (with no need for  Kahler differentials).

To set our conventions, we recall the basic operations for the Cartan calculus in the graded context:

\begin{itemize}
\item For a homogeneous $X\in \Der(\cR)$, we define $i_X: \Omega^k_\cR \to \Omega^{k-1}_\cR$ by
$$
(i_X\alpha)(X_1,\ldots,X_{k-1})= (-1)^{\m{X}\m{\alpha}}\alpha(X,X_1,\ldots,X_{k-1}).
$$
Note that $\m{i_X\alpha} = \m{X}+\m{\alpha}$, so $i_X$ is an operator of degree $\m{X}$. For $r\in \cR$, we have
\begin{equation}\label{eq:contract}
i_{rX}\alpha = r (i_X\alpha), \qquad i_X(r\alpha)=(-1)^{\m{X}\m{r}}r (i_X\alpha).
\end{equation}
\item For a homogeneous $X\in \Der(\cR)$, we define
$\Lie_X: \Omega^k_\cR\to \Omega^k_\cR$ by
\begin{align*}
(\Lie_X\alpha)(Y_1,\ldots,Y_k) =& X (\alpha(Y_1,\ldots,Y_k))\\& - (-1)^{\m{\alpha}\m{X}}\sum_{i=1}^k(-1)^{\m{X}(\m{Y_1}+\ldots\m{Y_{i-1}})}\alpha(Y_1,\ldots,[X,Y_i],\ldots,Y_k),
\end{align*}
which is an operator of internal degree $\m{X}$ satisfying
\begin{equation}\label{eq:Lieleib}
\Lie_X(r\alpha)= X(r) \alpha + (-1)^{\m{X}\m{r}} r \Lie_X\alpha,
\end{equation}
for $r\in \cR$.

\item We define $d:\Omega^k_\cR\to \Omega_\cR^{k+1}$ recursively by 
$$
i_Xd\alpha = \Lie_X\alpha - d(i_X\alpha),
$$
which is an operator of internal degree $0$. In particular,
$d:\cR\to \Omega^1_\cR$ is defined by $i_Xdr = X(r)$, and satisfies
\begin{equation}\label{eq:der0}
d(r_1r_2) = (dr_1)r_2 + r_1(dr_2).
\end{equation}

\end{itemize}

These operators satisfy graded versions of the usual Cartan identities:
\begin{equation}\label{eq:cartan1}
i_Xi_Y=-(-1)^{\m{X}\m{Y}}i_Yi_X, \quad [\Lie_X,i_Y]=\Lie_Xi_Y-(-1)^{\m{X}\m{Y}}i_Y\Lie_X=i_{[X,Y]},
\end{equation}
as well as
\begin{equation}\label{eq:cartan2}
[\Lie_X,\Lie_Y]=\Lie_{[X,Y]}, \quad\; \Lie_X=i_Xd+di_X, \quad\; \Lie_Xd=d\Lie_X, \quad\; d^2=0.
\end{equation}

Suppose that $(\cR, \dR)$ is a commutative dg algebra, i.e., $\dR\in \Der^1(\cR)$ and $[\dR,\dR]=2 \dR^2=0 $. We have natural lifts of $\dR$ to $\Der(\cR)$ and $\Omega^k_\cR$, namely
$$
d_{\Der}:= [\dR, \cdot] :  \Der^\bullet(\cR)\to \Der^{\bullet+1}(\cR)
$$
is a differential making $\Der(\cR)$ a dg $\cR$-module, while
$$
d_\Omega:= \Lie_{\dR}: \Omega_\cR^{k, \bullet}\to \Omega_\cR^{k,\bullet+1}
$$
makes $\Omega_\cR^k$ a dg $\cR$-module. 

By the graded Jacobi identity for the graded commutator of derivations we have that
\begin{equation}\label{eq:dDerbrk}
d_{\Der}[X,Y]
=
[d_{\Der}X,Y]
+
(-1)^{|X|}[X,d_{\Der}Y],
\end{equation}
while the Cartan identities immediately imply that
\begin{align}
d_{\Omega}(i_X\alpha)
&=
i_{d_{\Der}X}\alpha
+
(-1)^{|X|}i_X(d_{\Omega}\alpha), \label{eq:dgcartan1} \\ 
d_{\Omega}(\Lie_X\alpha)
&=
\Lie_{d_{\Der}X}\alpha
+
(-1)^{|X|}\Lie_X(d_{\Omega}\alpha),\label{eq:dgcartan2}\\
d_{\Omega}(d\alpha)
&=
d(d_{\Omega}\alpha).\label{eq:dgcartan3}
\end{align}

\subsection{Standard dg Courant-Dorfman algebras and twists}\label{subsec:twists}
Consider the dg $\cR$-module
$$
\mathbb{T}_\cR:= \Der(\cR)\oplus \Omega^1_\cR, \qquad d_{\bT}(X,\alpha)=(d_\Der(X), d_\Omega(\alpha)).
$$
Consider the pairing 
\begin{equation}\label{eq:pairingstd}
\SP{e_1,e_2} = i_X\beta + (-1)^{\m{e_1}\m{e_2}}i_Y\alpha,
\end{equation}
for $e_1=(X,\alpha)$, $e_2=(Y,\beta)$, the
bracket
\begin{equation*}
\Cour{(X,\alpha), (Y,\beta)}=([X,Y], \Lie_X\beta - (-1)^{\m{e_1}\m{e_2}}i_Yd\alpha),\label{eq:brktsdt app}
\end{equation*}
and let $\partial: \cR \to \bT_\cR$ be given by
\begin{equation}\label{eq:partialstd}
\partial r = (0, dr). 
\end{equation}
Then $\SP{\cdot,\cdot}$, $\partial$ and $\Cour{\cdot,\cdot}$ make $(\cR,\bT_\cR)$ a dg Courant-Dorfman algebra of degree $0$, referred to as the {\em standard Courant-Dorfman algebra} associated with $\cR$. More generally, we have the following class of examples of dg Courant Dorfman algebras.

\begin{proposition}\label{prop:twisted}
Let $F\in \Omega_\cR^{2,1}$ and $H\in \Omega_\cR^{3,0}$ be homogeneous forms satisfying
\begin{equation}\label{eq:FHclos}
d_\Omega F=0, \quad dH=0, \quad d_\Omega H=dF,
\end{equation}
and define $d_{\bT,F}: \bT_\cR\to \bT_\cR$ by
\begin{equation}\label{eq:dF}
d_{\bT,F}(X,\alpha) = (d_\Der(X), d_\Omega(\alpha) + (-1)^{\m{X}}i_XF).
\end{equation}
Then $d_{\bT,F}$ makes $\bT_\cR$ into a dg module over $\cR$ in such a way that the pairing \eqref{eq:pairingstd}, derivation \eqref{eq:partialstd} and bracket
\begin{equation}\label{eq:H-twisted-bracket}
\Cour{(X,\alpha), (Y,\beta)}_H=([X,Y], \Lie_X\beta - (-1)^{\m{e_1}\m{e_2}}i_Yd\alpha + (-1)^{\m{X}\m{Y}}i_Yi_XH)
\end{equation}
define a dg Courant-Dorfman algebra of degree $0$.
\end{proposition}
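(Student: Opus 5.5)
We first check that $d_{\bT,F}$ squares to zero and is compatible with the module structure; then we take the untwisted standard structure as known and treat $F$ and $H$ as perturbations, verifying only the cross terms. For the first point, the Leibniz rule \eqref{eq:leib2} for $d_{\bT,F}$ follows from the corresponding rules for $d_\Der$ and $d_\Omega$ together with \eqref{eq:contract}: the correction $(-1)^{\m{X}}i_XF$ is $\cR$-linear with the right sign, since $i_{rX}F=r\,i_XF$ and $\m{rX}=\m{r}+\m{X}$. For $d_{\bT,F}^2=0$ we expand
\[
d_{\bT,F}^2(X,\alpha)=\bigl(0,\ d_\Omega\bigl((-1)^{\m X}i_XF\bigr)+(-1)^{\m X+1}i_{d_\Der X}F\bigr),
\]
using $d_\Der^2=0$ and $d_\Omega^2=\Lie_{\dR}^2=\frac12\Lie_{[\dR,\dR]}=0$. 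By \eqref{eq:dgcartan1} this equals $i_X d_\Omega F$, which vanishes because $d_\Omega F=0$.

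Next we check axioms (i.)--(iii.) of Def.~\ref{def:dgDC} for the twisted data.
\begin{itemize}
\item For (iii.), $d_{\bT,F}\partial r=(0,d_\Omega dr)=(0,d\,d_\Omega r)=\partial\dR r$ by \eqref{eq:dgcartan3} and $d_\Omega r=\dR r$. There is no $F$-term because the vector component of $\partial r$ is zero.
\item For (ii.), the untwisted identity follows from \eqref{eq:dgcartan1}. The extra terms are $\langle(0,(-1)^{\m X}i_XF),e_2\rangle+(-1)^{\m{e_1}}\langle e_1,(0,(-1)^{\m Y}i_YF)\rangle$. These reduce to $\pm i_Yi_XF\pm i_Xi_YF$, which cancel by \eqref{eq:cartan1} and the skew-symmetry of the $2$-form $F$. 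Most of the work here is careful sign bookkeeping, using $\m{F}=1$.
\item For (i.), we compare $d_{\bT,F}$ of the $H$-twisted bracket with the two bracket terms. The untwisted part holds by \eqref{eq:dDerbrk} and \eqref{eq:dgcartan1}--\eqref{eq:dgcartan2}. The remaining defect is a trilinear expression in $X,Y$. Its $F$-contributions assemble into $i_Yi_X dF$ (via $\Lie_X i_YF$, $i_Y d\,i_XF$ and $i_{[X,Y]}F$, combined by Cartan identities), and its $H$-contribution is $i_Yi_X d_\Omega H$ by \eqref{eq:dgcartan1}. The condition $d_\Omega H=dF$ is exactly what makes them cancel.
\end{itemize}
This step is where we expect the main obstacle: matching signs between the $F$-terms and the $H$-term. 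We would first do the calculation with all elements of degree zero to fix the structure, and then insert Koszul signs.

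Finally we check the algebraic axioms (a)--(c) and (1)--(5). These do not involve the differential, so they are statements about the $H$-twisted Dorfman bracket on the graded module $\bT_\cR$, a graded analogue of the Ševera--Weinstein twisted bracket.
\begin{itemize}
\item Axioms (2), (3), (4) and (5) only involve $H$ through $i_Yi_XH$, which is $\cR$-bilinear and has no vector component. For (2) the $H$-terms cancel by antisymmetry of $H$. For (3) and (5), $i_{\cdot}i_{0}H=0$. For (4), $\cR$-linearity of $i_Yi_XH$ in $Y$ gives the Leibniz rule.
\item Axiom (1) picks up $\langle i_Yi_XH,\cdot\rangle$ terms, which cancel by total antisymmetry of $H$.
\item The Leibniz (Jacobi) identity (c) produces the standard defect $i_Zi_Yi_X dH$, which vanishes since $dH=0$.
\end{itemize}
These are formal consequences of \eqref{eq:cartan1}--\eqref{eq:cartan2} in the graded setting. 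We would verify them once for the untwisted case and then add the $H$-terms, invoking the known ungraded computation as a template.
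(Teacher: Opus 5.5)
Your proposal is correct and follows essentially the same route as the paper's proof. You treat $F$ and $H$ as perturbations of the standard structure, get $d_{\bT,F}^2(X,\alpha)=(0,i_Xd_\Omega F)$ from \eqref{eq:dgcartan1}, reduce (ii.) to the graded skew-symmetry of $F$, reduce (i.) to a mixed identity on pure derivations equivalent to $d_\Omega H=dF$, and use $dH=0$ for the Leibniz identity. The only slips are cosmetic: axiom (5) involves only the pairing (it holds because $\partial r$ has no vector component), and the untwisted part of (i.) also needs \eqref{eq:dgcartan3}, since the bracket contains $i_Yd\alpha$.
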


 \begin{proof} 
 The $\cR$-bilinearity of the pairing follows from \eqref{eq:contract}, and \eqref{eq:der0} ensures that $\partial$ is a (degree $0$) derivation.
 
 For a homogeneous derivation $X$, set
 \[ 
 c_F(X):=(-1)^{|X|}i_XF, \qquad  b_H(X,Y):=(-1)^{|X||Y|}i_Yi_XH.
 \] 
 By our contraction convention, the sign factors above ensure that $c_F$ and $b_H$ are defined by ordinary insertions into $F$ and $H$,
$$
c_F(X)(Y)=F(X,Y), \qquad b_H(X,Y)(Z)=H(X,Y,Z).
$$
We divide the verification of the axioms in Def.~\ref{def:dgDC} into steps.

\smallskip

\noindent{\em Step 1: $(\bT_\cR, d_{\bT,F})$ is a dg module over $\cR$.}
Note that $d_{\bT,F}$ has degree $1$, while the $\cR$-linearity 
$$
c_F(rX) = (-1)^{|X|+\m{r}}i_{rX}F  = (-1)^{\m{r}}rc_F(X)
$$ 
implies that \eqref{eq:leib2} is satisfied.

By \eqref{eq:dgcartan1}, we obtain
$$
d_\Omega c_F(X)+c_F(d_{\Der}X) = (-1)^{|X|} d_\Omega(i_XF) + (-1)^{|X|+1}i_{d_{\Der}X}F= i_X(d_\Omega F), 
$$
which implies that $d_{\mathbb T,F}^{2}(X,\alpha) = \bigl(0,i_X(d_\Omega F)\bigr)$. Hence 
$d_{\bT,F}^{2}=0$ follows from $d_\Omega F=0$.

\smallskip

\noindent{\em Step 2: Conditions (ii.) and (iii.).} The standard differential
$d_\bT$ satisfies (ii.), and the fact that this also holds for $d_{\bT,F}$ reduces to the identity
$$
\SP{(0,c_F(X)),e_2} + (-1)^{\m{e_1}}\SP{e_1,(0, c_F(Y))} = F(X,Y) + (-1)^{\m{X}\m{Y}} F(Y,X) =0,
$$
for homogeneous $e_1=(X,\alpha)$, $e_2=(Y,\beta)$.

Note that (iii.) holds for $d_\bT$ by \eqref{eq:dgcartan3}, and it holds for $d_{\bT,F}$ since
$$
d_{\bT,F}\partial r = d_\bT \partial r = \partial(\dR r).
$$
\smallskip

\emph{Step 3: Courant--Dorfman identities (1)--(5).} 
Axiom (1) follows from the Cartan identities \eqref{eq:cartan1} and the graded alternation of $H$. Axiom (2) uses Cartan's formula and the graded alternation of $H$. Axiom (3) is a consequence of $d^2=0$. Axiom (4) follows from \eqref{eq:leibder},  \eqref{eq:contract}, and \eqref{eq:Lieleib}  together with the $\cR$-multilinearity of $H$. Axiom (5) just uses the definition of the pairing.

\smallskip

\emph{Step 4: The Jacobi-like identity in (c).} This can be verified using the various Cartan identities \eqref{eq:cartan1} and \eqref{eq:cartan2} and $dH=0$.

\smallskip

\emph{Step 5: Condition (i.)}. The fact that the standard differental and bracket satisfy (i.) follows from \eqref{eq:dDerbrk}--\eqref{eq:dgcartan3}.
Since $c_F$ and $b_H$ depend only on the derivation components, the only new identity is obtained by taking $e_1=(X,0)$ and $e_2=(Y,0)$. The one-form component of the compatibility condition (i.) for $d_{\bT,F}$ and $\Cour{\cdot,\cdot}_H$ is
\begin{align} \label{eq:mixed-FH-identity} d_\Omega b_H(X,Y)+c_F([X,Y]) ={}& b_H(d_{\Der}X,Y) + (-1)^\m{X} b_H(X,d_{\Der}Y) \nonumber\\ &- (-1)^{(\m{X}+1)\m{Y}}i_Yd\,c_F(X) + (-1)^\m{X}\Lie_Xc_F(Y), 
\end{align}
and evaluating both sides on a derivation $Z$ gives $d_\Omega H(X,Y,Z)=dF(X,Y,Z)$.

\smallskip
 
Combining the steps, we conclude that  the pairing \eqref{eq:pairingstd}, the derivation \eqref{eq:partialstd}, the differential \eqref{eq:dF}, and the bracket \eqref{eq:H-twisted-bracket} define a dg Courant--Dorfman algebra of degree $0$. \end{proof}

We refer to the Courant-Dorfman structure in Prop.~\ref{prop:twisted} as the {\em $(F,H)$-twisted dg Courant-Dorfman algebra} on $\bT_\cR$.


\subsection{Degree shifts} \label{subsection:shifts}
We can also use the shifts in Prop. \ref{prop:shifts} to produce examples of 
standard and twisted dg Courant-Dorfman algebras of degree $-m$.
Fix $m\in\mathbb Z$, and let us denote
$$
\mathfrak{X}_m := \Der(\cR)[-m], \qquad \Omega_m^k:=\Omega_\cR^k[-km],
$$
and consider the dg $\cR$-module 
\[
\bT_\cR[-m] = \Der(\cR)[-m]\oplus \Omega^1_{\cR}[-m] = \mathfrak{X}_m \oplus \Omega_m^1,
\]
where we keep the $\cR$-module structure and differentials unchanged, and only shift the grading. 

Let us denote the natural shift operators by
\begin{equation}\label{eq:shift}
s: \Der(\cR)\to \Der(\cR)[-m], \qquad s_k: \Omega^k_\cR \to \Omega^k_\cR[-km]=\Omega^k_m,
\end{equation}
so $s$ is a map of degree $m$, and $s_k$ of degree $km$.

For $X=s(X_0)$, $Y=s(Y_0)$, and $\alpha=s_k(\alpha_0)$, define the following shifted Cartan operations (denoted in boldface): 
$$
\bm{[} X,Y \bm{]}:=s([X_0,Y_0]),
$$
which is a graded Lie bracket of degree $-m$ on $\mathfrak{X}_m$,
and 
\begin{itemize}
\item $\bm{i}_X: \Omega^k_m\to \Omega^{k-1}_m$, 
$$
\bm{i}_X\alpha := (-1)^{m\m{\alpha_0}} s_{k-1}(i_{X_0}\alpha_0),
$$ 
which is an operator of degree $\m{X}-2m$;
\item $\bm{\Lie}_X:\Omega^k_m\to \Omega^k_m$, 
$$
\bm{\Lie}_X\alpha = s_k(\Lie_{X_0}\alpha_0),
$$ 
which is an operator of degree $\m{X}-m$;
\item $\bm{d}:\Omega^k_m\to \Omega^{k+1}_m$, 
$$
\bm{d}\alpha = (-1)^{\m{\alpha_0}m}s_{k+1}(d\alpha_0),
$$
which is an operator of degree $m$; in particular, $(\bm{d}r)_0 = (-1)^{\m{r}m}dr$.
\end{itemize}
The Cartan relations for these operators remain valid with respect to the corresponding degrees.

Consider $F \in \Omega^{2,1}_\cR$ and $H\in \Omega_\cR^{3,0}$, and their shifted versions 
$$
\bm F:= s_2(F) \in (\Omega_m^{2})^{2m+1}, \qquad \bm H:=s_3(H)\in (\Omega_m^3)^{3m}. 
$$
Assume that $d_\Omega F=0$, $d H=0$ and $d_\Omega H=dF$, or, equivalently, in shifted notation,
\begin{equation*}\label{eq:FHclosed}
d_\Omega \bm{F}=0, \qquad \bm{d} \bm{H}=0, \qquad d_\Omega \bm{H}=(-1)^m \bm{d} \bm{F}.
\end{equation*}
Then the shift  (described in Prop.~\ref{prop:shifts}) of the $(F,H)$-twisted dg Courant-Dorfman algebra of degree $0$ on $\bT_\cR$  is a dg Courant-Dorfman algebra on $\bT_\cR[-m]$ of degree $-m$ that, in terms of the shifted Cartan operations, takes the following form: the differential on $\cE= \bT_\cR[-m]$ is
\begin{equation}\label{eq:dFtwist}
\dE(X,\alpha) = (d_\Der X , d_\Omega \alpha + (-1)^{\m{X}}\bm{i}_X \bm{F}),
\end{equation}
the pairing is 
\begin{equation}\label{eq:canpairingm}
\SP{(X,\alpha),(Y,\beta)}=\bm{i}_X\beta+(-1)^{|e_1||e_2|+m}\bm{i}_Y\alpha,
\end{equation}
for homogeneous $e_1=(X,\alpha)$, $e_2=(Y,\beta)$, the degree $m$ derivation is
\begin{equation}\label{eq:derm}
\partial r=(0,\bm{d}r),
\end{equation}
and the Leibniz bracket is
\begin{equation}\label{eq:leibm}
\Cour{(X,\alpha),(Y,\beta)}_H=(\bm{[}X,Y\bm{]},\bm{\Lie}_X\beta-(-1)^{(|e_1|-m)(|e_2|-m)}\bm{i}_Y\bm{d}\alpha + (-1)^{(\m{X}-m)\m{Y}}\bm{i}_Y\bm{i}_X \bm{H}).
\end{equation}

This is the {\em $(F,H)$-twisted dg Courant-Dorfman structure} on $\bT_\cR[-m]$.

\subsection{Nondegenerate pairing and normal form}
The next result gives a normal form for  dg Courant-Dorfman algebras on $\bT_\cR[-m]=\mathfrak{X}_m\oplus \Omega^1_m$ when the canonical pairing is nondegenerate.

Recall the shift operators $s$ and $s_k$ from \eqref{eq:shift}, and the notations
$X = s(X_0) \in \mathfrak{X}_m$ and $\alpha = s_k(\alpha_0) \in \Omega^k_m$.

\begin{proposition}\label{prop:exactdgCD}
Suppose that $\bT_\cR[-m]$ is endowed with a dg Courant-Dorfman algebra
structure of degree $-m$ whose pairing and derivation are the standard
ones,
\[
\SP{(X,\alpha),(Y,\beta)}
=
\bm{i}_X\beta
+
(-1)^{|e_1||e_2|+m}\bm{i}_Y\alpha,
\qquad
\partial r=(0,\bm d r).
\]
Assume that the pairing is nondegenerate, so $\partial$ corresponds to the anchor
$$
\rho: \bT_\cR[-m]\to \Der(\cR), \qquad \rho(X,\alpha)=X_0.
$$
Then there exist unique homogeneous forms $F\in \Omega^{2,1}_\cR$, 
$H \in \Omega_\cR^{3,0}$, satisfying \eqref{eq:FHclos}, such that 
the internal differential and bracket on $\bT_\cR[-m]$ are given by \eqref{eq:dFtwist} and \eqref{eq:leibm}. I.e, the dg Courant-Dorfman structure on $\bT_\cR[-m]$ is the standard one twisted by $(F,H)$.

\end{proposition}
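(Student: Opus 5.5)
First reduce to degree $0$. Prop.~\ref{prop:shifts} is invertible: shifting back by $m$ turns the given structure on $\bT_\cR[-m]$ into a dg Courant--Dorfman structure of degree $0$ on $\bT_\cR$. This unshifted structure has the standard pairing \eqref{eq:pairingstd} and the standard derivation $\partial r=(0,dr)$. The sign conventions for $\bm{i}$, $\bm{d}$ and the pairing in \S\ref{subsection:shifts} are chosen precisely so that this holds. It therefore suffices to prove the statement for $m=0$. Indeed, \S\ref{subsection:shifts} shows that the shift of the $(F,H)$-twisted structure of Prop.~\ref{prop:twisted} is \eqref{eq:dFtwist}, \eqref{eq:leibm}. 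Uniqueness transfers because the shift is a bijection.

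\textbf{The bracket and $H$.} For $m=0$, nondegeneracy lets us recover both the bracket and the differential from their pairings. Let $\Cour{\cdot,\cdot}$ be the given bracket and $\Cour{\cdot,\cdot}_0$ the standard Dorfman bracket. Set $\Delta(e_1,e_2)=\Cour{e_1,e_2}-\Cour{e_1,e_2}_0$. Both brackets have the same anchor, pairing and $\partial$, and both satisfy axioms (1), (2) and (4). Hence:
\begin{itemize}
\item by (4), $\Delta$ is $\cR$-linear in the second entry;
\item by (2), $\Delta$ is graded skew, and therefore $\cR$-bilinear;
\item by (1), the trilinear form $T(e_1,e_2,e_3)=\SP{\Delta(e_1,e_2),e_3}$ is graded skew in $(e_2,e_3)$, and hence totally graded alternating.
\end{itemize}
Next, compatibility of both brackets with the anchor, $\rho\Cour{e_1,e_2}=[\rho e_1,\rho e_2]$, gives $\rho\circ\Delta=0$. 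This forces $\Delta$ to take values in $\Omega^1_\cR$, so $T$ vanishes whenever an entry lies in $\Omega^1_\cR$. Thus $T$ is a 3-form $H$ in the derivation entries, and $\Delta(X,Y)=(-1)^{|X||Y|}i_Yi_XH$ in the normalization of \eqref{eq:H-twisted-bracket}. Since the bracket has degree $0$, $H\in\Omega^{3,0}_\cR$. Finally, the Jacobi identity (c) for the twisted bracket reduces, as in Step 4 of Prop.~\ref{prop:twisted}, to $dH=0$.

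\textbf{The differential and $F$.} Write $D=d_{\bT}+\Gamma$, where $d_\bT$ is the standard differential. Axiom (ii.) holds for both, and (iii.) holds for $d_\bT$. Therefore $\Gamma$ is $\cR$-linear (from \eqref{eq:leib2}), $\Gamma\circ\partial=0$ (from (iii.)), and $\SP{\Gamma e_1,e_2}+(-1)^{|e_1|}\SP{e_1,\Gamma e_2}=0$.

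It remains to show that $\Gamma$ kills $\Omega^1_\cR$ and maps $\Der(\cR)$ into $\Omega^1_\cR$. This uses the anchor relation \eqref{eq:rhointertwine}, $\rho\circ D=d_{\Der}\circ\rho$, which follows from (iii.) together with (ii.) by nondegeneracy. It gives $\rho\circ\Gamma=0$, so $\Gamma$ lands in $\Omega^1_\cR$. Taking $e_1,e_2\in\Omega^1_\cR$ in the skew-adjointness relation then shows that $\Gamma$ vanishes on $\Omega^1_\cR$, using that $\Omega^1$ is Lagrangian.

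Define $F(X,Y)$ through $\Gamma(X)=(0,(-1)^{|X|}i_XF)$. Skew-adjointness says exactly that $F$ is graded alternating, so $F\in\Omega^{2,1}_\cR$, since $D$ has degree $1$. The remaining conditions follow from Steps 1 and 5 of Prop.~\ref{prop:twisted} read backwards: $D^2=0$ gives $d_\Omega F=0$, and axiom (i.) on pairs of derivations gives $d_\Omega H=dF$ via \eqref{eq:mixed-FH-identity}.

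\textbf{Uniqueness and the main obstacle.} Uniqueness of $F$ and $H$ is clear, since they are read off from $D$ and the bracket. The main obstacle is showing that $\rho$ intertwines the brackets and the differentials. Here the paper's remarks on nondegenerate pairings apply. Condition (3) together with (1), (2) gives $\rho\Cour{e_1,e_2}=[\rho e_1,\rho e_2]$. Condition (iii.) gives \eqref{eq:rhointertwine}, by pairing with $\partial r$ and using (ii.). Beyond that, the work is careful bookkeeping of the graded signs. I would first verify everything in the unshifted case and treat the shift purely formally.
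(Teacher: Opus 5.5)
One step fails as written. To show that $\Gamma$ vanishes on $\Omega^1_\cR$, you test the skew-adjointness relation $\SP{\Gamma e_1,e_2}+(-1)^{|e_1|}\SP{e_1,\Gamma e_2}=0$ on $e_1,e_2\in\Omega^1_\cR$. But you have just shown that $\Gamma$ takes values in $\Omega^1_\cR$. So every pairing in that relation is between two $1$-forms, and the relation collapses to $0=0$: testing only against $1$-forms can never detect a $1$-form.

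Your observation $\Gamma\circ\partial=0$ does not rescue this either. By $\cR$-linearity it only kills the $\cR$-span of exact forms, which need not be all of $\Omega^1_\cR=\mathrm{Hom}_\cR(\Der(\cR),\cR)$.

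The correct test pair, which is the one the paper uses, is $e_1=(0,\alpha)$, $e_2=(Y,0)$. The second term vanishes because $(0,\alpha)$ and $\Gamma(Y,0)$ are both $1$-forms and $\Omega^1_\cR$ is isotropic. Hence $\SP{\Gamma(0,\alpha),(Y,0)}=0$ for every derivation $Y$. Nondegeneracy (a $1$-form is determined by its values on derivations) then gives $\Gamma(0,\alpha)=0$. What is needed is isotropy plus nondegeneracy, with a derivation as the second test vector; the Lagrangian property plays no role.

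With this repaired, your argument is correct and differs from the paper's only in route.

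\emph{The reduction to degree $0$.} The paper works directly in degree $-m$ with the shifted Cartan operators, while you first unshift. That uses the converse of Prop.~\ref{prop:shifts}, which the paper does not state. It does hold: the dictionary \eqref{eq:shiftedCD} is invertible, and each degree-$(-m)$ axiom on $\bT_\cR[-m]$ is the corresponding degree-$0$ axiom on $\bT_\cR$ multiplied by a sign. You should say this explicitly rather than just calling the shift invertible.

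\emph{The bracket.} The paper writes $[(X,0),(Y,0)]=([X,Y],B(X,Y))$ via the anchor. It then computes the mixed and pure-form brackets one at a time from axioms (1) and (2), and reads off the symmetries and bilinearity of $B$ from (1), (2) and (4). Your difference tensor $\Delta$ against the standard Dorfman bracket does all of this at once. Since the standard bracket satisfies (1), (2) and (4) for the same pairing and $\partial$, the inhomogeneous terms cancel. So $\Delta$ is automatically $\cR$-bilinear, $\Omega^1$-valued, and totally alternating after pairing. Its vanishing on entries in $\Omega^1_\cR$ and its identification with $H$ then come for free, which is tidier than the paper's component-by-component computation.

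The closing identities $dH=0$, $d_\Omega F=0$ and $d_\Omega H=dF$ are obtained exactly as in the paper.
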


\begin{proof} 
Recall the standard internal differential on $\cE= \bT_\cR[-m]$, $d_\bT(X,\alpha)
= (d_{\Der}(X),d_{\Omega}(\alpha))$. Let
\[
C:=\dE- d_\bT.
\]
Note that $C$ is an $\cR$-linear map of degree $1$. The fact that $\rho(\dE e) = d_\Der(\rho(e))$ implies that the $\mathfrak{X}_m$-component of $\dE(X,\alpha)$ is $d_{\Der}X$, so $C$ takes values in $\Omega^1_m$. 

Since both $d_\bT$ and $\dE$ are compatible with the standard pairing (condition (ii.)), $C$ satisfies
\[
0 = \SP{C(0,\alpha),(Y,0)} +
(-1)^{|\alpha|} \SP{(0,\alpha),C(Y,0)} = \SP{C(0,\alpha),(Y,0)},
\]
where we use that $C(Y,0)$ is a 1-form for the second equality. By nondegeneracy of the pairing, $C(0,\alpha)=0$, so  there is a map $c:\mathfrak{X}_m\to  \Omega_m^1$
of degree $1$ such that
\[
C(X,\alpha)=(0,c(X)).
\]
Again by condition (ii.), we have that
$$
0 = \SP{(0,c(X)),(Y,0)} +
(-1)^{|X|}\SP{(X,0),(0,c(Y))}
$$
for all $X$, $Y$ in $\mathfrak{X}_m$. Letting $F(X_0,Y_0):= c(X)_0(Y_0)$, we see (using the pairing and contraction conventions) that the previous identity gives
\[
F(X_0,Y_0) = -(-1)^{\m{X_0}\m{Y_0}}F(Y_0,X_0).
\]
Using the graded $\cR$-linearity of $c$ and the graded
$\cR$-linearity of the one-form $c(X)$, we conclude that
$F\in \Omega^{2,1}_{\cR}$. Using  the shifted contraction convention, one checks that
$$
c(X)_0(Y_0)=F(X_0,Y_0)=(-1)^{\m{X}}(\bm{i}_X \bm F)_0(Y_0),
$$
i.e., $c(X)= (-1)^{\m{X}}\bm{i}_X \bm F$. This proves that $\dE$ is given by \eqref{eq:dFtwist}. The identity $\dE^2=0$ is equivalent
to
$d_{\Omega}c(X)+c(d_{\Der}X)=0$, which, upon  evaluation on $Y_0$, gives precisely
\[
d_{\Omega}F=0.
\]

As for the Leibniz bracket, the fact that $\rho$ preserves brackets implies that
$$
[(X,0),(Y,0)]=([X,Y], B(X,Y)),
$$
for some $B(X,Y)\in \Omega^1_m$
Applying the Courant-Dorfman axioms (1) and (2) to pure derivations and pure 1-forms give
\begin{align*}
[(X,0),(0,\beta)] & = (0,\bm{\Lie}_X\beta),\\
[(0,\alpha),(Y,0)] &= (
0, -(-1)^{(|\alpha|-m)(|Y|-m)}
\bm{i}_Y\bm{d}\alpha),\\
[(0,\alpha),(0,\beta)]&=0.
\end{align*}
Axiom (2) implies that $B(X,Y)
= -(-1)^{(\m{X}-m)(\m{Y}-m)}B(Y,X)$,
axiom (4) gives the $\cR$-bilinearity of $B$,
and (1) applied to three pure derivations
implies the remaining graded skew-symmetry. It follows that
\[
H(X_0,Y_0,Z_0)
:=
\bigl(B(X,Y)\bigr)_0(Z_0)
\]
defines a unique $H\in \Omega^{3,0}_{\cR}$. Note that
$$
\bigl(\bm{i}_Y\bm{i}_X \bm H\bigr)_0(Z_0)
=
(-1)^{m|X_0|+|X_0||Y_0|}
H(X_0,Y_0,Z_0)
=
(-1)^{(|X|-m)|Y|}
H(X_0,Y_0,Z_0),
$$
which shows that $[\cdot,\cdot]= \Cour{\cdot,\cdot}_H$ in \eqref{eq:leibm}.

Applying the Jacobi-like identity in (c) to three pure
derivations and evaluating its one-form component on a fourth
derivation gives $dH=0$.

The remaining identity $d_\Omega H = d F$ in \eqref{eq:FHclos} is a consequence of compatibility (i.) of the bracket with the internal
differential. 

\end{proof}



\begin{thebibliography}{1}

\bibitem{AbadCrainic}
C. Abad, M. Crainic, \emph{The Weil algebra and the Van Est isomorphism}, Ann. Inst. Fourier (Grenoble) {\bf 61} (2011), no. 3, 927--970.

\bibitem{AlekseevStrobl}
A. Alekseev, T. Strobl,
{\em Current algebras and differential geometry},
J. High Energy Phys. \textbf{03} (2005), 035.

\bibitem{Poncin}
M. Ammar, N. Poncin, {\em Coalgebraic Approach to the Loday Infinity Category, Stem Differential for 2n-ary Graded and Homotopy Algebras}, Annales de l'Institut Fourier, {\bf 60} (2010), 355--387.

\bibitem{BaragliaHekmati}
D. Baraglia, P. Hekmati,
{\em Transitive Courant algebroids, string structures and T-duality},
Adv. Theor. Math. Phys. \textbf{19} (2015), no.~3,
613--672

\bibitem{BV} I.A. Batalin, G.A. Vilkovisky, \emph{Relativistic S-matrix of dynamical systems with bosons and fermions}, Physics Letters \textbf{69B} (1977), 309--312.


\bibitem{BF} I.A. Batalin, E.S. Fradkin, \emph{A generalized canonical formalism and quantization of reducible gauge theories}, Physics Letter \textbf{122B} 1983, 157--164.

\bibitem{BRS} C. Becchi, A. Rouet, R. Stora, {\em Renormalization of the abelian Higgs-Kibble model,} Commun. Math. Phys. \textbf{42} (1975), 127-162.

\bibitem{AleZabzine}
F. Bonechi, A. Cabrera, M. Zabzine,
\emph{AKSZ construction from reduction data},
J. High Energy Phys. (2012), no.~7, 068.

\bibitem{Bressler} P. Bressler, A. Chervov. \emph{Courant Algebroids}, Journal of Mathematical Sciences \textbf{128} (2005), no.~4, 3030--3053.

\bibitem{GGGR}
H. Bursztyn, A. S. Cattaneo, R. A. Mehta, M. Zambon,
\emph{Graded geometry and generalized reduction},
to appear in \emph{Mem. Eur. Math. Soc.};
arXiv:2306.01508.



\bibitem{Bursztyn}
H. Bursztyn, G. R. Cavalcanti, M. Gualtieri,
\emph{Reduction of Courant algebroids and generalized complex structures},
Adv. Math. \textbf{211} (2007), no.~2, 726--765.

\bibitem{Bursztyn1}
H. Bursztyn, G. R. Cavalcanti, M. Gualtieri,
\emph{Generalized K\"ahler and hyper-K\"ahler quotients},
in: G. Dito, J.-H. Lu, Y. Maeda, A. Weinstein (eds.),
\emph{Poisson Geometry in Mathematics and Physics},
Contemp. Math. \textbf{450},
Amer. Math. Soc., Providence, RI, 2008, 61--77.

\bibitem{thesis} P. H. Carvalho, \emph{Homological reduction of Poisson and Courant structures}, Ph.D. thesis, IMPA, 2024.

\bibitem{Carvalho}
P. H. Carvalho,
\emph{On homological reduction of Poisson structures},
Commun. Math. Phys. \textbf{406} (2025), article no.~58.

\bibitem{CDSZ}
M. Cueca, F. Dorsch, R. Sjamaar, C. Zhu, {\em The shifted symplectic geometry of derived higher groupoids}. ArXiv:2607.17362.

\bibitem{MJ}
M. Cueca, J. Schnitzer,\emph{Deformations of Lagrangian NQ-submanifolds},
Adv. Math. \textbf{458} (2024), Part A, 109952.




\bibitem{Fairon}
M. Fairon,
Introduction to graded geometry,
{\em European Journal of Mathematics} \textbf{3} (2017), no.~2,
208--222.

\bibitem{GarciaFernandezRicci}
M. Garcia-Fernandez,
{\em Ricci flow, Killing spinors, and T-duality in generalized geometry},
Adv. Math. \textbf{350} (2019), 1059--1108.


\bibitem{Gualtieri GC} M. Gualtieri, \emph{Generalized complex geometry}, Annals of Mathematics (2), \textbf{174}(1) (2011), 75-123.

\bibitem{Kalkman}
J. Kalkman,
\emph{BRST model for equivariant cohomology and representatives for the equivariant Thom class},
Commun. Math. Phys. \textbf{153} (1993), no.~3, 447--463.



\bibitem{Waldmann}
F. Keller, S. Waldmann,
\emph{Deformation theory of Courant algebroids via the Rothstein algebra},
J. Pure Appl. Algebra \textbf{219} (2015), no.~8, 3391--3426.

\bibitem{Kimura}
T. Kimura,
\emph{Generalized classical BRST cohomology and reduction of Poisson manifolds},
Commun. Math. Phys. \textbf{151} (1993), 155--182.

\bibitem{Yvette - derived bracket}
Y. Kosmann-Schwarzbach,
\emph{From Poisson algebras to Gerstenhaber algebras},
Ann. Inst. Fourier (Grenoble) \textbf{46} (1996), no.~5, 1243--1274.

\bibitem{Yvette2}
Y. Kosmann-Schwarzbach,
\emph{Derived brackets},
Lett. Math. Phys. \textbf{69} (2004), 61--87.


\bibitem{Kostant}
B. Kostant, S. Sternberg,
\emph{Symplectic reduction, BRS cohomology, and infinite-dimensional Clifford algebras},
Ann. Physics \textbf{176} (1987), no.~1, 49--113.


\bibitem{KotovSalnikov}
A. Kotov, V. Salnikov,
The category of $\mathbb{Z}$-graded manifolds: What happens if you do not stay positive,
{\em Differential Geometry and its Applications} \textbf{93} (2024),
Paper No.~102109, 25 pp.

\bibitem{LWX}
Z.-J. Liu, A. Weinstein, P. Xu,
\emph{Manin triples for Lie bialgebroids},
J. Differential Geom. \textbf{45} (1997), no.~3, 547--574.

\bibitem{Mandal}
A. Mandal,
\emph{On exact Courant algebras},
Comm. Algebra \textbf{44} (2016), no.~5, 2058--2066.

\bibitem{Mehta's Thesis}
R. A. Mehta,
\emph{Supergroupoids, double structures, and equivariant cohomology},
Ph.D. thesis, University of California, Berkeley, 2006;
arXiv/0605356.


\bibitem{Pym-Safronov} B. Pym, P. Safronov, \emph{Shifted symplectic Lie algebroids}, Int. Math. Res. Not. \textbf{21} (2020), pp. 7489-7557.

\bibitem{RoytenbergThesis}
D. Roytenberg,
\emph{Courant algebroids, derived brackets and even symplectic supermanifolds},
Ph.D. thesis, University of California, Berkeley, 1999;
arXiv/9910078.

\bibitem{Roytenberg}
D. Roytenberg,
\emph{On the structure of graded symplectic supermanifolds and Courant algebroids},
in: T. Voronov (ed.),
\emph{Quantization, Poisson Brackets and Beyond},
Contemp. Math. \textbf{315},
Amer. Math. Soc., Providence, RI, 2002, 169--185.

\bibitem{Roytenberg - AKSZ} D. Roytenberg,  \emph{AKSZ-BV formalism and Courant algebroid-induced topological field theories}, Lett. Math. Phys. {\bf 79} (2007),  143-159. 

\bibitem{Roytenberg CD}
D. Roytenberg,
\emph{Courant--Dorfman algebras and their cohomology},
Lett. Math. Phys. \textbf{90} (2009), 311--351.

\bibitem{SeveraValachSupergravity}
P. \v Severa, F. Valach,
{\em Courant algebroids, Poisson--Lie T-duality, and type II supergravities},
Commun. Math. Phys. \textbf{375} (2020), 307--344.

\bibitem{SeWe}
P. \v{S}evera, A. Weinstein,
\emph{Poisson geometry with a 3-form background},
Progr. Theoret. Phys. Suppl. \textbf{144} (2001), 145--154.

\bibitem{Nikolay}
N. Sheshko, {\em Derived Symplectic Reduction in Differential Geometry}. ArXiv:2605.16226.


\bibitem{Stasheff}
J. Stasheff,
\emph{Homological reduction of constrained Poisson algebras},
J. Differential Geom. \textbf{45} (1997), no.~1, 221--240.

\bibitem{Uchino}
K. Uchino, \emph{Derived brackets and sh Leibniz algebras},
Journal of Pure and Applied Algebra {\bf 215} (2011), 1102--1111.

\bibitem{Voronov - Higher brackets}
T. Voronov,
\emph{Higher derived brackets and homotopy algebras},
J. Pure Appl. Algebra \textbf{202} (2005), nos.~1--3, 133--153.

\bibitem{VoronovGradedGeometry}
T. Voronov,
{\em Graded geometry, $Q$-manifolds, and microformal geometry},
Fortschritte der Physik \textbf{67} (2019), nos.~8--9,
Article No.~1910023.


\bibitem{Vysoky}
J. Vysoky,
\emph{Graded generalized geometry},
J. Geom. Phys. \textbf{182} (2022), 104683.




\end{thebibliography}
\end{document}